\newcommand{\R}{{\mathbb{R}}}
\newcommand{\E}{\mathrm{E}}
\renewcommand{\P}{\mathrm{P}}
\renewcommand{\d}{\mathrm{d}}
\newcommand{\e}{\mathrm{e}}
\newcommand{\Var}{\text{\rm Var}}
\title{Spatial ergodicity for SPDEs via a Poincar\'e-type inequality\thanks{Research supported in part by the NSF grant DMS-1608575.}}
\author{Le Chen\thanks{Email: \texttt{le.chen@unlv.edu}}\\University of Nevada, Las Vegas\\
	\and
	Davar Khoshnevisan\thanks{Email: \texttt{davar@math.utah.edu}}\\University of Utah\\
	\and
	Fei Pu\thanks{Email: \texttt{pu@math.utah.edu}}\\University of Utah}
\date{May 16, 2019}
\begin{document}
\newtheorem{stat}{Statement}[section]
\newtheorem{proposition}[stat]{Proposition}
\newtheorem*{prop}{Proposition}
\newtheorem{corollary}[stat]{Corollary}
\newtheorem{theorem}[stat]{Theorem}
\newtheorem{lemma}[stat]{Lemma}
\theoremstyle{definition}
\newtheorem{definition}{Definition}
\newtheorem*{cremark}{Remark}
\newtheorem{remark}[stat]{Remark}
\newtheorem*{OP}{Open Problem}
\newtheorem{example}[stat]{Example}
\newtheorem{nota}[stat]{Notation}
\numberwithin{equation}{section}
\maketitle

\begin{abstract}
	Consider a parabolic stochastic PDE of the form
	$\partial_t u=\frac12\Delta u + \sigma(u)\eta$, where
	$u=u(t\,,x)$ for $t\ge0$ and $x\in\R^d$, $\sigma:\R\to\R$
	is Lipschitz continuous and non random, and $\eta$ is a centered Gaussian
	noise that is white in time and colored in space, with a possibly-signed
	homogeneous spatial correlation function $f$. If, in addition, 
	$u(0)\equiv1$, then we prove that, under a mild
	decay condition on $f$, the process $x\mapsto u(t\,,x)$ is stationary and ergodic at
	all times $t>0$.  It has been argued that, when coupled with
	moment estimates,  spatial ergodicity of $u$
	teaches us about the intermittent nature of the solution to such
	SPDEs \cite{BertiniCancrini1995,KhCBMS}. Our results provide rigorous justification of
	of  such discussions. The proof rests on novel facts about functions
	of positive type, and on strong localization bounds for comparison of SPDEs.
\end{abstract}

\section{Introduction}

The principal aim of this article is to establish relatively simple-to-check, but also
broad, conditions under which the solution
$u=\{u(t\,,x)\}_{t\ge0\,,x\in\R^d}$ to a parabolic stochastic PDE is
\emph{spatially stationary and ergodic}.
Equivalently, we would like to know conditions under which
$u(t)$ is stationary and ergodic, in its spatial variable $x$, at all times $t>0$. This problem,
and its relation to intermittency, have been
mentioned informally  for example
in the introduction of Bertini and Cancrini \cite{BertiniCancrini1995}; see also Chapter
7 of Khoshnevisan \cite{KhCBMS}. This problem is also connected somewhat loosely to novel
applications of Malliavin calculus to central limit theorems
for parabolic SPDEs; see Huang et al \cite{HuangNualartViitasaari2018,HuangNualartViitasaariZheng2019}.

In order for spatial ergodicity to be a meaningful property, one needs to consider parabolic SPDEs
for which the solution is {\it a priori} a stationary process in its spatial variable. Thus, we study the
following archetypal parabolic problem:
\begin{equation}\label{SHE}
\begin{cases}
\displaystyle
\partial_t  u = \frac12\Delta u + \sigma(u)\eta
		&\text{on $(0\,,\infty)\times\R^d$},\\[0.5em]
\displaystyle	u(0)\equiv 1,
\end{cases}\end{equation}
where $\sigma$ is Lipschitz continuous and non random, and $\eta$
denotes a generalized, centered, Gaussian random field with
covariance form
\[
	\E\left[\eta(t\,,x)\eta(s\,,y)\right] = \delta_0(t-s) f(x-y)
	\qquad\text{for all $s,t\ge0$ and $x,y\in\R^d$},
\]
where $f$ is a nonnegative-definite locally-integrable function  on $\R^d$.
Somewhat more formally, the Wiener-integral process
$\psi \mapsto\eta(\psi) :=\int_{\R_+\times\R^d}
 \psi(t\,,x)\,\eta(\d t\,\d x)$ is linear a.s.\ and satisfies
\[
	\text{\rm Cov}\left( \eta( \psi _1)\,,\eta( \psi _2)\right) = \int_0^\infty
	\left\langle  \psi_1(t)\,, \psi_2(t)*f\right\rangle_{L^2(\R^d)}\d t,
\]
for every $\psi_1,\psi_2$ in
the space $C_c(\R_+\times\R^d)$ of all compactly-supported, continuous,
real-valued functions on $\R_+\times\R^d$.

In order to simplify our exposition, we consider
throughout only the case that
\begin{equation}\label{f=h*h}
	f = h*\tilde{h},
\end{equation}
for a possibly-signed function  $h:\R^d\to\R$, where $\tilde{h}(x):=h(-x)$ defines
the reflection of $h$, and the function $h$ is assumed to
has enough regularity to ensure among other things
that the  convolution in \eqref{f=h*h} is well defined in the sense of distributions.
Condition \eqref{f=h*h} is enforced throughout the paper, and without further mention.
In this case, we have the elegant formula,
\[
	\Var\left( \eta( \psi )\right) = \int_0^\infty
	\left\| \psi(t)*h\right\|_{L^2(\R^d)}^2\d t,
	\qquad\text{
	valid for all $\psi\in C_c(\R_+\times\R^d)$.}
\]

The solution theory for \eqref{SHE} is particularly well established when $f$
is a non-negative, non-negative definite, and tempered function on $\R^d$. In that case,
we know from classical harmonic analysis that the Fourier transform
$\widehat{f}$ of $f$ is a tempered Borel measure on $\R^d$. In that setting,
the theory of Dalang \cite{Dalang1999} implies that, if in addition,
\begin{equation}\label{Dalang}
	h\in H_{-1}(\R^d),\quad\text{equivalently,}\quad \int_{\R^d}
	\frac{|\widehat{h}(x)|^2}{1+\|x\|^2}\,\d x<\infty,
\end{equation}
then \eqref{SHE} has a random-field solution $u$ that is unique subject to
the integrability condition
\begin{equation}\label{cond:moment}
	\sup_{t\in[0,T]}\sup_{x\in\R^d}\E\left(|u(t\,,x)|^k\right)<\infty
	\qquad\text{for every $T>0$ and $k\ge2$},
\end{equation}
and $u$ is both $L^k(\P)$-continuous in $(t\,,x)$, and weakly stationary in $x$
for every $t>0$.
Furthermore, it is known that Condition \eqref{Dalang} is
necessary and sufficient for example when $\sigma$ is a non-zero constant;
see Dalang \cite{Dalang1999}, as well as Peszat and Zabczyk \cite{PeszatZabczyk2000}.

There is also a literature on well-posedness and regularity theory for \eqref{SHE}
when the spatial correlation function $f$ is signed, though such results tend to be
applicable in a more specialized setting
as compared with the
theory of Dalang \cite{Dalang1999}; see for example
\cite{ChenHuNualart2017,ChenHuang2019,%
HuHuangLeNualartTindel2017,HuHuangNualartTindel2015,%
HuangLeNualart2017}.

Here, we prove that a mild integrability condition on $h$ implies that $|h|\in H_{-1}(\R^d)$
(see \eqref{cond:omega} and Lemma \ref{lem:Dalang:Lp}), which in turn implies the existence of a
spatially stationary random-field solution $u$ to \eqref{SHE} that is unique
subject to \eqref{cond:moment} (see Theorem \ref{th:exist}).
More significantly, we prove that the ensuing Condition \eqref{cond:omega}
on $h$ ensures that $u$
is spatially ergodic. In any case, the end result is the following.\footnote{%
	For a very brief
	discussion of relevant measurability issues, see Remark \ref{rem:doob} below.}

\begin{theorem}\label{th:main:intro}
	Choose and fix a real number $p>1$ and let $q:=p/(p-1)$.
	Suppose that $h\in L^p_{\text{\it loc}}(\R^d)$ satisfies
	\begin{equation}\label{cond:omega}
		\int_0^1 \left( \|h\|_{L^p(\mathbb{B}_r)} \|h\|_{L^q(\mathbb{B}_r^c)}
		+ \|h\|_{L^2(\mathbb{B}_r^c)}^2\right)\omega_d(r)\,\d r<\infty,
	\end{equation}
	where, for every $r>0$,
	\begin{equation}\label{eq:omega}
		\omega_d(r) := \begin{cases}
			1&\text{if $d=1$},\\
			r\log_+(1/r)&\text{if $d=2$},\\
			r&\text{if $d\ge3$,}
		\end{cases}
	\end{equation}
	$\log_+(z):=\log(z\vee\e)$ for all $z\in\R$,
	and $\mathbb{B}_s := \{y\in\R^d:\, \|y\|\le s\}$ for every $s>0$.
	Then, given the spatial correlation structure \eqref{f=h*h},
	the SPDE \eqref{SHE} has a spatially stationary and ergodic random-field solution
	$u$ that is unique subject to the integrability condition \eqref{cond:moment}.
\end{theorem}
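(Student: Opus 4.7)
The plan splits Theorem \ref{th:main:intro} into three pieces: random-field existence and uniqueness inside the class \eqref{cond:moment}, spatial stationarity, and spatial ergodicity. The first is essentially already dispatched by the excerpt itself: one applies the forthcoming Lemma \ref{lem:Dalang:Lp} to pass from \eqref{cond:omega} to the Dalang condition $|h|\in H_{-1}(\R^d)$, and then invokes Theorem \ref{th:exist}. No further work is needed at that step.

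For spatial stationarity, I would proceed by symmetry and uniqueness. Because the initial condition $u(0)\equiv 1$ is translation invariant and $f$ is a function of $x-y$ alone, the law of $\eta$ is translation invariant in space. Consequently, for each $a\in\R^d$, the shifted field $\{u(t,x+a)\}_{t\ge 0,\,x\in\R^d}$ satisfies \eqref{SHE} driven by a noise of the same distribution as $\eta$, and inherits the bound \eqref{cond:moment}. Uniqueness in that class upgrades this to equality in finite-dimensional distributions with $\{u(t,x)\}_{t\ge 0,\,x\in\R^d}$.

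The main content is spatial ergodicity. By a form of Maruyama's criterion for stationary fields, it suffices to prove that
\[
\Cov\left(F\bigl(u(t,x_1+a),\ldots,u(t,x_n+a)\bigr),\; G\bigl(u(t,y_1),\ldots,u(t,y_m)\bigr)\right)\longrightarrow 0
\]
as $\|a\|\to\infty$, for all bounded Lipschitz $F,G$ and all finite configurations of points. I would establish this through a Poincar\'e-type inequality on Wiener space of the schematic form
\[
|\Cov(F,G)|\le \int_0^t \d s\int_{\R^d\times\R^d} \bigl\|D_{s,z}F\bigr\|_{L^2(\P)}\bigl\|D_{s,z'}G\bigr\|_{L^2(\P)}\,|f(z-z')|\,\d z\,\d z',
\]
where $D$ is the Malliavin derivative associated with $\eta$. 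The Malliavin derivative $D_{s,z}u(t,x)$ solves a linearized version of \eqref{SHE}, and the strong localization bounds advertised in the abstract are precisely the tool I would use to show it is concentrated near $z=x$, in an $L^2(\P)$ sense quantifiable by a Gaussian-type factor. Substituting those localization bounds into the covariance inequality and then shifting one block of arguments by $a$ reduces the estimate to a tail integral of $|f|$ against the localized Malliavin kernels, which is exactly the object that \eqref{cond:omega} is engineered to tame.

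The main obstacle I expect lies in combining signed $h$ with the sharp hypothesis \eqref{cond:omega}. When $h$ changes sign, $f$ and the covariance it induces need not be pointwise nonnegative, which obstructs comparison arguments that would otherwise dominate $f$ by a manifestly positive correlation. This is the step where the \emph{novel facts about functions of positive type} mentioned in the abstract should intervene to control the Malliavin moments; dually, the localization bound must be sharp enough that the product of the $L^2(\P)$-tails of $D_{s,z}u(t,x)$ and $D_{s,z'}u(t,y+a)$, integrated against $|f(z-z')|$, decays in $\|a\|$ under only the mild integrability \eqref{cond:omega}. I anticipate that verifying this delicate tail balance, rather than setting up the Poincar\'e inequality itself, will occupy most of the technical effort.
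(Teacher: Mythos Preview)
Your treatment of existence/uniqueness and stationarity matches the paper's. For ergodicity, however, the paper takes a route that avoids Malliavin calculus entirely. Rather than Maruyama's criterion plus a Wiener-space covariance inequality in terms of $D_{s,z}u$, the paper shows (Lemma~\ref{lem:Var:erg}) that ergodicity follows once
\[
\Var\left(\fint_{[0,N]^d}\prod_{j=1}^k g_j\bigl(u(t,x+\zeta^j)\bigr)\,\d x\right)\longrightarrow 0\qquad\text{as }N\to\infty,
\]
and then proves this (Theorem~\ref{th:full}) by comparing $u$ in $L^k(\P)$ to a triply localized approximation $u_n^{(m,h_r)}$: truncate $h$ to $h_r=h\bm{1}_{\mathbb{B}_r}$, restrict the stochastic integral to $\mathbb{B}_{m\sqrt t}(x)$, and stop Picard iteration at stage $n$. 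The resulting field has \emph{finite-range} spatial dependence, so the variance of its spatial average is elementary; the three approximation errors are controlled respectively by $\|h\|_{L^2(\mathbb{B}_r^c)}$, the gauge $\gamma_d(m^{d-2}\e^{-m^2/2})$, and $\e^{-n}$, and one optimizes. Thus the paper's ``Poincar\'e-type inequality'' is a variance bound for the \emph{occupation measure} of $u(t)$, not an inequality on Wiener space, and the ``strong localization bounds'' of the abstract refer to the $L^k$ comparison $u\approx u^{(m,h)}$ (Lemma~\ref{lem:N(u-um)}), not to decay of $D_{s,z}u(t,x)$ in $\|x-z\|$. Your Malliavin-derivative approach is a legitimate alternative---indeed it is the one adopted in later work on this circle of problems and meshes naturally with CLTs---but the machinery you anticipate (Malliavin differentiability and moment bounds under signed $h$, tail decay of the derivative kernel) is not what the paper builds; its argument is more elementary in that it never differentiates $u$ at all.
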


\begin{remark}
	We have selected the initial data to be identically 1 in Theorem \ref{th:main:intro}
	to be concrete.
	The same method of proof shows that Theorem \ref{th:main:intro}
	continues to hold if the initial data is an arbitrary
	stationary random field $\{u(0\,,x)\}_{x\in\R^d}$ that is independent of $\eta$
	and is continuous in $L^k(\P)$ for every real number $k\ge1$.
\end{remark}

The following is an immediate consequence of Theorem \ref{th:main:intro},
and presents easy-to-check conditions for \eqref{SHE} to have a
unique random-field solution that is spatially
ergodic (as well as stationary).

\begin{corollary}\label{co:main:intro}
	Suppose that $h:\R^d\to\R$ is Borel measurable, and either that $h\in L^2(\R^d)$ or
	that there exist $\alpha\in(0\,,d\wedge 2)$
	and $\beta>0$ such that
	\begin{equation}\label{cond:co:main:intro}
		\sup_{\|w\|<1} \|w\|^{(d+\alpha)/2}|h(w)| <\infty
		\quad\text{and}\quad
		\sup_{\|z\|>1} \|z\|^{(d+\beta)/2}|h(z)| <\infty.
	\end{equation}
	Then, \eqref{SHE} has a random-field solution $u$ that is unique
	subject to the moment condition \eqref{cond:moment}.
	Moreover, $u(t)$ is stationary and ergodic for every $t>0$.
\end{corollary}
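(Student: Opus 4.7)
The corollary will follow directly from Theorem \ref{th:main:intro} once I verify the integrability hypothesis \eqref{cond:omega} in each of the two scenarios; since the upper endpoint $r=1$ is harmless, the work is all near $r=0$. In the first scenario, where $h\in L^2(\R^d)$, I would take $p=q=2$: the trivial bound
\[
\|h\|_{L^2(\mathbb{B}_r)}\|h\|_{L^2(\mathbb{B}_r^c)}+\|h\|_{L^2(\mathbb{B}_r^c)}^2\le 2\|h\|_{L^2(\R^d)}^2
\]
reduces \eqref{cond:omega} to $\int_0^1\omega_d(r)\,\d r<\infty$, which is clear in each dimension from \eqref{eq:omega} (the $d=2$ case is the elementary $r\log_+(1/r)$ estimate).

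For the second scenario I would pick $p\in(1\,,2d/(d+\alpha))$; the interval is non-empty because the hypothesis forces $\alpha<d$, and for such $p$ the local pointwise bound in \eqref{cond:co:main:intro} already puts $h$ in $L^p_{\text{\it loc}}(\R^d)$. Shrinking $p$ towards $1$ if necessary makes the conjugate exponent $q=p/(p-1)$ large enough to also satisfy $q(d+\beta)/2>d$; this is the only balancing that has to be done. Standard polar-coordinate estimates then yield, as $r\searrow0$,
\[
\|h\|_{L^p(\mathbb{B}_r)}\lesssim r^{d/p-(d+\alpha)/2},\qquad
\|h\|_{L^q(\mathbb{B}_r^c)}\lesssim r^{d/q-(d+\alpha)/2}+O(1),\qquad
\|h\|_{L^2(\mathbb{B}_r^c)}^2\lesssim r^{-\alpha}+O(1),
\]
with the $O(1)$ contributions coming from $\mathbb{B}_1^c$ and being finite thanks to $\beta>0$. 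The identity $1/p+1/q=1$ makes the product of the first two estimates equal to a constant multiple of $r^{-\alpha}$ near the origin, so the entire integrand of \eqref{cond:omega} is $O(r^{-\alpha})$ there.

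It then suffices to check $\int_0^1 r^{-\alpha}\omega_d(r)\,\d r<\infty$, which, after inserting \eqref{eq:omega}, reduces in the three respective cases to the convergence of $\int_0^1 r^{-\alpha}\,\d r$, $\int_0^1 r^{1-\alpha}\log(1/r)\,\d r$, or $\int_0^1 r^{1-\alpha}\,\d r$; each of these holds precisely because $\alpha<d\wedge 2$. The only (very mild) obstacle in the plan is the simultaneous choice of $p$ that balances the singular behavior of $h$ near the origin against its decay at infinity, but once $\alpha<d$ is noted, both constraints are easy to satisfy and the rest is bookkeeping.
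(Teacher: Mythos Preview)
Your proposal is correct and follows essentially the same route as the paper's own proof: in the $L^2$ case you both take $p=q=2$ and reduce to the local integrability of $\omega_d$, and in the pointwise-bound case you both select $p\in(1,2d/(d+\alpha))$ close enough to $1$ that $q>2d/(d+\beta)$, obtain the same polar-coordinate estimates leading to an $O(r^{-\alpha})$ integrand, and then invoke $\alpha<d\wedge 2$ to conclude. Your explicit separation of the $\mathbb{B}_1^c$ contribution as an $O(1)$ term and the case-by-case check of $\int_0^1 r^{-\alpha}\omega_d(r)\,\d r$ are slightly more detailed than the paper's treatment, but the argument is the same.
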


It is worth noting that, whereas \eqref{cond:omega} is a global integrability condition on $h$,
\eqref{cond:co:main:intro} involves: (i) A local condition on the behavior of $h$
near the origin; and (ii) A separate local-at-infinity (growth) condition on $h$. Still, it is 
not hard to argue that
\eqref{cond:co:main:intro} implies \eqref{cond:omega};
see \S\ref{sec:Poincare} below.

It is also worth noting that the first
(local) condition on $h$ in \eqref{cond:co:main:intro} is there merely to ensure that
$|h|\in H_{-1}(\R^d)$, which in turn will imply that
\eqref{SHE} has a solution. The second (growth)
condition on $h$ in \eqref{cond:co:main:intro}  is the more interesting hypothesis.
That condition is responsible
for ensuring that $h$ --- whence also $f$ --- decays sufficiently rapidly so that
spatial ergodicity of the solution $u$ to \eqref{SHE} is ensured.
In \S\ref{sec:example} below we construct an example of $f$ for which \eqref{SHE} has a nice
random-field solution that is spatially stationary but not ergodic. This can be done because
$f$ does not have sufficient (in fact, any) spatial decay of correlations.

Our ergodicity result (Theorem \ref{th:main:intro})
is related to the title of the paper because
Theorem \ref{th:main:intro} is a consequence of a Poincar\'e-type inequality for
the occupation measure of $u(t)$; see the paragraph
that follows Theorem \ref{th:Poincare}. Next, we describe a
special case of that Poincar\'e
inequality, presented  in the context of the simple-to-describe Corollary \ref{co:main:intro}.

\begin{corollary}[A Poincar\'e inequality]\label{co:Poincare:intro}
	If \eqref{cond:co:main:intro} holds, then for every $T>0$
	there exists a real number $C_T=C_T(\sigma\,,d\,,h)>0$ such that
	\[
		\sup_{t\in[0,T]}
		\Var\left( \frac{1}{N^d}\int_{[0,N]^d} g(u(t\,,x))\,\d x\right)
		\le  C_T\left[\frac{(\log N)^{3/2}}{N}\right]^{d\beta/(d + \beta)},
	\]
	uniformly for all real numbers $N>1$ and all 1-Lipschitz functions $g:\R\to\R$.
\end{corollary}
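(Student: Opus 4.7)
The plan is to derive Corollary \ref{co:Poincare:intro} as a specialization of the general Poincar\'e-type inequality of Theorem \ref{th:Poincare}, by feeding in the explicit polynomial decay of $h$ furnished by \eqref{cond:co:main:intro}. First I would apply Theorem \ref{th:Poincare} to the averaged functional
\[
F_N := \frac{1}{N^d}\int_{[0,N]^d} g(u(t,x))\,\d x,
\]
and use the spatial stationarity of $u(t\,,\cdot)$ to rewrite the resulting variance estimate as $N^{-2d}$ times a double integral over $[0,N]^{2d}$ whose integrand depends only on $\|x-y\|$. The point of the Poincar\'e inequality combined with the strong localization bounds flagged in the abstract is precisely that $\Cov(g(u(t,x))\,,g(u(t,y)))$ is then controlled by tail norms of $h$ on balls of radius comparable to $\|x-y\|$.

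Next I would split the double integral at a truncation radius $R\in(0,N)$. The near-diagonal region $\|x-y\|\le R$ contributes at most $C_T R^d/N^d$ via the uniform moment bound \eqref{cond:moment} and the $1$-Lipschitz property of $g$. For the far-diagonal region I would invoke the localization bound: under \eqref{cond:co:main:intro}, the growth condition $|h(z)|\lesssim\|z\|^{-(d+\beta)/2}$ for $\|z\|\ge 1$ gives $\|h\|_{L^2(\mathbb{B}_R^c)}^2\lesssim R^{-\beta}$ and analogous polynomial tails for the $L^p$--$L^q$ pieces appearing in \eqref{cond:omega}. Integrating these tails against the weight $\omega_d$ from \eqref{eq:omega} should produce a contribution of order $(\log R)^{3/2}R^{-\beta}$, the $3/2$ power being the trace left by $\omega_d$ in low dimensions and by the endpoint norms in high dimensions. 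Combining both regimes yields a bound of the shape
\[
\Var(F_N)\le C_T\left(\frac{R^d}{N^d}+\frac{(\log R)^{3/2}}{R^\beta}\right).
\]

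Balancing the two terms by taking $R\asymp N^{d/(d+\beta)}$ (up to logarithmic factors) then gives the claimed rate $C_T[(\log N)^{3/2}/N]^{d\beta/(d+\beta)}$. The main obstacle I anticipate is extracting the correct polylog from the interaction between the weight $\omega_d$ and the tail norms, which requires tracking \eqref{cond:co:main:intro} through the Poincar\'e framework carefully enough to recover the specific exponent $3/2$; once that intermediate bound is in hand, the rest is a one-parameter optimization in $R$.
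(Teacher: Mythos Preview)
Your proposal has a structural gap: you want to expand $\Var(F_N)$ as a double integral and bound $\Cov\bigl(g(u(t,x)),g(u(t,y))\bigr)$ pointwise in $\|x-y\|$, but the paper never establishes such a two-point covariance decay, and Theorem \ref{th:Poincare} does not supply one. The localization machinery gives \emph{approximation in $L^k$} of $u$ by a finitely-dependent field $u_n^{(m,h_r)}$; it does not hand you a bound of the form $|\Cov|\lesssim \psi(\|x-y\|)$. Extracting a usable covariance bound from the localization would itself require the three-parameter optimization over $(m,n,r)$ that the paper already packages into $\tau_d$ and $\gamma_d$, so your single-radius split at $R$ is too coarse to recover the stated exponent.

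Relatedly, your explanation of the $(\log N)^{3/2}$ is incorrect: it has nothing to do with $\omega_d$ (which carries a logarithm only when $d=2$, and then in the small-$r$ regime). In the paper's argument one starts from Theorem \ref{th:full}, which bounds the variance by
\[
\tau_d\!\left(\frac{m^{3d}}{N^d}\right)+\sqrt{\gamma_d\!\left(m^{d-2}e^{-m^2/2}\right)}.
\]
Under \eqref{cond:co:main:intro} one checks $\tau_d(a)\lesssim a^{\beta/(d+\beta)}$ and $\gamma_d(t)\lesssim t^B$ for some $B>0$; then choosing $m\asymp\sqrt{\log N}$ kills the $\gamma_d$ term and leaves $(m^3/N)^{d\beta/(d+\beta)}\asymp\bigl[(\log N)^{3/2}/N\bigr]^{d\beta/(d+\beta)}$. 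So the $3/2$ is simply the cube of $m\asymp(\log N)^{1/2}$ coming from the $m^{3d}$ in the gauge-function bound, and the whole proof is a one-parameter optimization in $m$ (equivalently $a$) inside the already-proved Theorem \ref{th:full}, not a geometric near/far split of the covariance.
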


Corollary \ref{co:Poincare:intro} is proved in \S\ref{sec:Poincare}.
The main technical result of this paper is in fact
a much more general Poincar\'e-like inequality (Theorem \ref{th:Poincare}).
But that result is more involved, and its
precise description requires some development. Therefore,
the more general Poincar\'e-like inequality will be presented later on in \S\ref{sec:Poincare},
together with its proof.

Consider for the moment the special case $d=1$ and formally
let $\alpha\to0$ and $\beta\to\infty$ in Corollary \ref{co:Poincare:intro}
to deduce --- heuristically, of course --- 
that the corollary ought to cover the important case where $\eta$
denotes space-time white noise [that is, $f=\delta_0$], and that
\[
	\sup_{t\in[0,T]}
	\Var\left( \frac{1}{N}\int_0^N g(u(t\,,x))\,\d x\right)
	\le  C_T\frac{(\log N)^{3/2}}{N},
\]
uniformly for all real numbers $N>1$ and all 1-Lipschitz functions $g:\R\to\R$.
In a separate paper \cite{CKP2019} we intend to prove that this is so,
and that the Poincar\'e constant $O([\log N]^{3/2}/N)$
can be improved upon further. 
That same paper [{\it ibid.}] will also contain a number of more
detailed applications of such inequalities, specialized to the setting of space-time
white noise.

Here is a brief outline of the paper: In \S\ref{sec:example} we present an example
which shows that we cannot expect spatial ergodicity of the solution
of \eqref{SHE} unless $f$ exhibits some sort of decay at infinity (such as the conditions
of Theorem \ref{th:main:intro} on $h$, hence on $f$). Section \ref{sec:FPT} includes
comments and a few harmonic-analytic results on functions of positive type.
Section \ref{sec:Dalang} discusses known results on
the well-posedness of \eqref{SHE}, and discusses how
conditions of Theorem \ref{th:main:intro} ensure among other things that the absolute
value of $h$ is in the classical space Hilbert space $H_{-1}(\R^d)$. In \S\ref{sec:SC}
we extend the stochastic Young inequality of
Walsh integrals \cite{ConusKh2012,FoondunKhoshnevisan2009}
to the case that $f$ is possibly signed
and satisfies the conditions of Theorem \ref{th:main:intro}. It is shown
in \S\ref{sec:wellposedness} that the well-posedness of \eqref{SHE} is a ready consequence
of the mentioned stochastic Young's inequality; see Theorem \ref{th:exist}.
The stationarity
assertion of Theorem \ref{th:main:intro} is proved next in \S\ref{sec:stat}.
Section \ref{sec:localization} contains a technical localization construction
which strengthens and improves an earlier one in Conus et al \cite{CJK2013,CJKS2013}.
That localization procedure
forms the basis of a Poincar\'e inequality that is
presented in \S\ref{sec:Poincare}; see Theorems \ref{th:Poincare} and \ref{th:full}.
Finally,
Theorem \ref{th:main:intro} is proved shortly following the proof of Theorem \ref{th:full}.

Let us close the Introduction with a brief description of the notation of this paper.
Throughout we write ``$g_1(x)\lesssim g_2(x)$ for all $x\in X$'' when there exists a real number
$L$ such that $g_1(x)\le Lg_2(x)$ for all $x\in X$. Alternatively, we might write
``$g_2(x)\gtrsim g_1(x)$
for all $x\in X$.'' By ``$g_1(x)\asymp g_2(x)$ for all $x\in X$'' we mean that
$g_1(x)\lesssim g_2(x)$ for all $x\in X$ and $g_2(x)\lesssim g_1(x)$ for all $x\in X$. Finally,
``$g_1(x)\propto g_2(x)$ for all $x\in X$'' means that there exists a real number $L$
such that $g_1(x)=L g_2(x)$ for all $x\in X$.

Throughout, we write
\[
	\fint_E \psi(x)\,\d x := \frac{1}{|E|}\int_E\psi(x)\,\d x,
\]
whenever $\psi:\R^d\to\R$ is integrable on a Lebesgue-measurable set
$E\subset\R^d$ whose Lebesgue measure $|E|$ is strictly positive.

\section{An non-ergodic example}\label{sec:example}
In the Introduction we alluded that
if the tails of the spatial correlation function
$f$ do not vanish, then we cannot generally expect $u(t)$ to be  ergodic
for all $t\ge0$.
We now describe this in the context of
an  example in which the spatial correlation $f(x)$ does not decay as $\|x\|\to\infty$, the solution $u$
exists and is non-degenerate, and $u$ is
not spatially ergodic at  positive times.

First, we might as well rule out trivialities by assuming that
\begin{equation}\label{sigma(1)neq 0}
	\sigma(1)\neq0.
\end{equation}
Otherwise, one can see easily that $u(t\,,x)\equiv1$; in this case,
$u(t)$ is  ergodic for all $t\ge0$, but only in a vacuous sense.

Next, let us choose and fix a number $\lambda>0$, and suppose that
\begin{equation}\label{f=1}
	f(x)=\lambda^2\qquad\text{for all $x\in\R^d$},
\end{equation}
to ensure that the tails of $f$ do not decay. In this case, standard SPDE theory
shows that we can realize the noise $\eta(\d t\,\d x)$ as $\lambda\,\d W_t\,\d x$, where $W$ is standard,
one-dimensional Brownian motion. Thus, we can see from \eqref{SHE} and standard
arguments that, under
\eqref{f=1},
\begin{equation}\label{u=X}
	u(t\,,x) = X_t\qquad\text{for all $t\ge0$ and $x\in\R^d$ a.s.,}
\end{equation}
where $X$ denotes the unique solution of the one-dimensional It\^o stochastic differential equation,
\[
	\d X_t = \lambda\sigma(X_t)\,\d W_t,
	\qquad
	\text{subject to  $X_0=1.$}
\]
One can deduce from this that and standard estimates that
\[
	\lim_{t\to0^+}
	\frac1t\Var(X_t) =  \lambda^2\sigma^2(1),
\]
whence $\Var(X_t)>0$
for all $t$ small. Thus, we conclude from this and \eqref{u=X} that, under conditions
\eqref{sigma(1)neq 0} and \eqref{f=1}, the process $u(t)$ is not  ergodic for all $t>0$.
In fact, a little more effort shows that $\Var(X_t)>0$ for all $t>0$, thanks to the Markov property.
And this implies that $u(t)$ is not ergodic for any $t>0$.

In this example, the spatial correlation function $f\equiv\lambda^2$,
as given in \eqref{f=1}, does not satisfy \eqref{f=h*h}. However, one would
guess, based on this example, that it should be possible to construct
correlation functions $f$ that do satisfy \eqref{f=h*h}, and yet do not have sufficient
decay at $\infty$ to ensure spatial ergodicity of $u$. It might be interesting to construct such
examples.

\section{Functions of positive type}\label{sec:FPT}

Throughout, we use the following notation for open balls:
\[
	\mathbb{B}_r(x) := \left\{ y\in\R^d:\, \|x-y\|<r\right\}
	\quad\text{and}\quad
	\mathbb{B}_r := \mathbb{B}_r(0)
	\qquad\text{for $x\in\R^d$ and $r>0$}.
\]
A part of this notation was already introduced in the statement of Theorem \ref{th:main:intro}.
With this in mind, let us recall from classical harmonic analysis the following
(see Kahane \cite{Kahane}).

\begin{definition}\label{def:PT}
	A function $g:\R^d\to\R$ is of \emph{positive type} if:
	\begin{compactenum}
		\item $g$ is locally integrable and positive definite in the sense of distributions
			(that is, $\widehat{g}$ is non negative and hence a Borel measure, thanks to
			the Riesz representation theorem); and
		\item The restriction of $g$ to $\mathbb{B}_r^c$ is a uniformly continuous (and
			hence also bounded) function for every $r>0$.\footnote{Some authors insist
			that $g$ is of positive type if, in addition to the requirements of Definition \ref{def:PT},
			$g(0):=\lim_{x\to0}g(x)=\infty$. We do not do that here.}
	\end{compactenum}
\end{definition}

Typical examples include well-known
positive-definite functions such as $g(x)=\exp(-\alpha\|x\|^\beta)$ and/or
$g(x)=(\alpha'+\|x\|^\beta)^{-1}$ for constants
$\alpha\ge0$, $\alpha'>0$, and $\beta\in(0\,,2]$, etc. There
are also unbounded examples such as Riesz kernels
($g(x)=\|x\|^{-\gamma}$ for $\gamma\in(0\,,d)$), as well as products of
the above such as $g(x)=\|x\|^{-\gamma}\exp(-\alpha\|x\|^\beta)$, etc.

The main goal of this section is to present a  family $\cup_{p>1}\mathcal{F}_p(\R^d)$
of real-valued functions on $\R^d$
that can be used explicitly to construct a large number
of functions of positive type that are central to our analysis.
We will also use this opportunity to
introduce another vector space $\cup_{p>1}\mathcal{G}_p(\R^d)$ of functions
that will play a prominent role in later sections (though not in this one).

\begin{definition}\label{def:F_p}
	Choose and fix a real number $p>1$, and
	define $\mathcal{F}_p(\R^d)$ to be the collection of all $h \in L^p_{\text{\it loc}}(\R^d)$
	that satisfy
	\begin{equation}\label{cond:th:main:intro}
		\int_0^1 s^{d-1} \left( \| h \|_{L^p(\mathbb{B}_s)}
		 \| h \|_{L^q(\mathbb{B}_s^c)} + \| h \|_{L^2(\mathbb{B}_s^c)}^2\right)\d s<\infty,
	\end{equation}
	where $q:=p/(p-1)$.
	We also define $\mathcal{G}_p(\R^d)$ to be the collection of every
	function $h\in L^p_{\text{\it loc}}(\R^d)$ that satisfies \eqref{cond:omega}.
\end{definition}

In this section we study some of the basic properties of the elements of
the vector spaces
$\cup_{p>1}\mathcal{F}_p(\R^d)$ and $\cup_{p>1}\mathcal{G}_p(\R^d)$.
It might help to add that, notationally speaking, the functions $h$ in $\cup_{p>1}
\mathcal{G}_p(\R^d)$ and $\cup_{p>1}\mathcal{F}_p(\R^d)$ will be potential
candidates for the function $h$ in \eqref{f=h*h}, which are then used to form
the spatial correlation function $f$ in \eqref{SHE}. Thus, the notation should aid
the reading, and not hinder it.

\begin{lemma}\label{lem:F_p}
	The following are valid for every $p>1$, where $q:=p/(p-1)$:
	\begin{compactenum}
		\item $\mathcal{G}_p(\R^d)\subseteq
			\mathcal{F}_p(\R^d)\subseteq
			L^1_{\text{\it loc}}(\R^d)$ for all $d\ge1$, and
			$\mathcal{G}_p(\R)=\mathcal{F}_p(\R)$.
		\item $\|h\|_{L^p(\mathbb{B}_r)}$,
			$\|h\|_{L^q(\mathbb{B}_r^c)}$, and
			$\|h\|_{L^2(\mathbb{B}_r^c)}$ are  finite for every
			$h\in\mathcal{F}_p(\R^d)$ and $r>0$.
		\item If $h\in\mathcal{F}_p(\R^d)$, then
			\begin{equation}\label{eq:F_p}
				\int_0^r s^{d-1} \left( \| h \|_{L^p(\mathbb{B}_s)}
				 \| h \|_{L^q(\mathbb{B}_s^c)} +
				 \| h \|_{L^2(\mathbb{B}_s^c)}^2\right)\d s<\infty
				 \qquad\text{for every $r>0$.}
			\end{equation}
		\item If $h\in\mathcal{G}_p(\R^d)$, then
			\begin{equation}\label{eq:G_p}
				\int_0^r \left( \| h \|_{L^p(\mathbb{B}_s)}
				 \| h \|_{L^q(\mathbb{B}_s^c)} +
				 \| h \|_{L^2(\mathbb{B}_s^c)}^2\right)\omega_d(s)\,\d s<\infty
				 \qquad\text{for every $r>0$.}
			\end{equation}
	\end{compactenum}
\end{lemma}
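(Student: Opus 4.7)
The plan is to verify each of the four assertions, leaning on three elementary tools throughout: the monotonicity of the three relevant norms in the radius $s$ (non-decreasing for $\|h\|_{L^p(\mathbb{B}_s)}$, non-increasing for $\|h\|_{L^q(\mathbb{B}_s^c)}$ and $\|h\|_{L^2(\mathbb{B}_s^c)}$), H\"older's inequality on bounded balls, and a pointwise comparison between $\omega_d$ and $s\mapsto s^{d-1}$. For item (1), I first check that $\omega_d(r)\ge r^{d-1}$ for every $r\in(0\,,1]$: when $d=1$ both sides equal $1$; when $d=2$, $\log_+(1/r)\ge 1$ gives $\omega_2(r)\ge r$; and when $d\ge 3$, $\omega_d(r)=r\ge r^{d-1}$ because $r\le 1$. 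Inserting this into the defining integral of $\mathcal{G}_p(\R^d)$ yields $\mathcal{G}_p(\R^d)\subseteq\mathcal{F}_p(\R^d)$, with equality when $d=1$ since the two weights then coincide. The inclusion $\mathcal{F}_p(\R^d)\subseteq L^1_{\text{\it loc}}(\R^d)$ follows at once from the H\"older estimate $\|h\|_{L^1(\mathbb{B}_r)}\le\|h\|_{L^p(\mathbb{B}_r)}|\mathbb{B}_r|^{1/q}$.

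For item (2), the finiteness of $\|h\|_{L^p(\mathbb{B}_r)}$ is simply the hypothesis $h\in L^p_{\text{\it loc}}(\R^d)$. For the two tail norms, I read \eqref{cond:th:main:intro} as forcing its non-negative integrand to be pointwise finite on a set of positive measure near zero, which furnishes a sequence $s_n\downarrow 0$ at which $\|h\|_{L^q(\mathbb{B}_{s_n}^c)}$ and $\|h\|_{L^2(\mathbb{B}_{s_n}^c)}$ are finite; the non-increasing monotonicity of these tail norms then propagates finiteness to every $r>0$. The one small subtlety is the case where $\|h\|_{L^p(\mathbb{B}_s)}$ vanishes identically on a neighbourhood $\mathbb{B}_{s_*}$ of the origin: there I use the $\|h\|_{L^2(\mathbb{B}_s^c)}^2$ summand to control the $L^2$-tail norm, and observe that $\|h\|_{L^q(\mathbb{B}_r^c)}$ is constant on $r\in(0\,,s_*]$ and therefore finite there provided it is finite at any single $r>s_*$, which is produced by the first summand as soon as $h$ is not a.e.\ zero.

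For items (3) and (4), I split $\int_0^r=\int_0^1+\int_1^r$ when $r>1$; the first piece is finite by definition. On $[1\,,r]$, $\|h\|_{L^p(\mathbb{B}_s)}\le\|h\|_{L^p(\mathbb{B}_r)}<\infty$ by $L^p_{\text{\it loc}}$-integrability, while $\|h\|_{L^q(\mathbb{B}_s^c)}\le\|h\|_{L^q(\mathbb{B}_1^c)}<\infty$ and $\|h\|_{L^2(\mathbb{B}_s^c)}\le\|h\|_{L^2(\mathbb{B}_1^c)}<\infty$ by item (2). Since $s^{d-1}$ and $\omega_d(s)$ are bounded on $[1\,,r]$, the additional contribution to each of \eqref{eq:F_p} and \eqref{eq:G_p} is finite.

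The only step requiring real care is item (2), where one must carefully interpret the product $\|h\|_{L^p(\mathbb{B}_s)}\|h\|_{L^q(\mathbb{B}_s^c)}$ at radii $s$ where the first factor vanishes; the device above of separating off the possible vanishing neighbourhood of $h$ near the origin resolves this cleanly. Everything else is routine bookkeeping with monotone norms and the weight comparison $\omega_d(s)\ge s^{d-1}$ on $(0\,,1]$.
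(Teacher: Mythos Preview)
Your argument follows essentially the same route as the paper's: the weight comparison $\omega_d(s)\ge s^{d-1}$ on $(0,1]$ for part~(1), monotonicity of the tail norms together with a.e.\ finiteness of the integrand for part~(2), and the monotone bound on $[1,r]$ for parts~(3) and~(4). The paper is terser on each step but the skeleton is identical.

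You were, in fact, more careful than the paper in flagging the $0\cdot\infty$ issue in part~(2), which the paper's proof does not mention at all. However, your proposed repair does not quite close the gap. Two problems: first, the map $r\mapsto\|h\|_{L^q(\mathbb{B}_r^c)}$ is non\emph{increasing}, so finiteness at some $r_0>s_*$ does not propagate \emph{down} to $r\le s_*$; the limit $\lim_{r\downarrow s_*}\|h\|_{L^q(\mathbb{B}_r^c)}$ could be infinite even if every term is finite. Second, if $s_*\ge 1$ (i.e.\ $h$ vanishes a.e.\ on all of $\mathbb{B}_1$), then the first summand is identically zero on the entire integration domain $[0,1]$ and gives no information about the $L^q$ tail at any radius. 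Concretely, with $d=1$, $p=3$, $q=3/2$, the function $h(x)=|x|^{-3/5}\bm{1}_{\{|x|>1\}}$ lies in $L^3_{\text{\it loc}}(\R)$ and satisfies \eqref{cond:th:main:intro} under the standard convention $0\cdot\infty=0$ (the second summand is a finite constant), yet $\|h\|_{L^{3/2}(\mathbb{B}_r^c)}=\infty$ for every $r>0$. So the edge case you spotted is real and neither your argument nor the paper's disposes of it; fortunately, in the paper's applications the product $\|h\|_{L^p(\mathbb{B}_r)}\|h\|_{L^q(\mathbb{B}_r^c)}$ only ever appears as a unit, so the issue is harmless downstream.
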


\begin{proof}
	We have $\mathcal{G}_p(\R^d)\subset\mathcal{F}_p(\R^d)$ for all $d\ge2$
	and $\mathcal{G}_p(\R)=\mathcal{F}_p(\R)$ because
	of \eqref{eq:omega}; and the local integrability of $h\in\mathcal{F}_p(\R^d)$
	is a consequence of H\"older's inequality. This proves part 1.
	We concentrate on the remaining assertions of the lemma.
	
	First, let us note that if $p>1$ and $h \in\mathcal{F}_p(\R^d)$, then $h$ is locally in $L^p(\R^d)$
	and hence $\| h \|_{L^p(\mathbb{B}_r)}$ is finite for every $r>0$. In particular,
	\begin{equation}\label{3:phi}
		\| h \|_{L^q(\mathbb{B}_r^c)} + \| h \|_{L^2(\mathbb{B}_r^c)}<\infty,
	\end{equation}
	for almost every $r\in[0\,,1]$. Since both of the norms in \eqref{3:phi}
	are monotonically-decreasing functions of $r$,
	it follows that in fact \eqref{3:phi} holds for every $r>0$. This proves  part 2 of
	the lemma.
	
	Next, suppose $r>1$ and observe that
	\[
		\int_1^rs^{d-1} \left( \| h \|_{L^p(\mathbb{B}_s)}
		 \| h \|_{L^q(\mathbb{B}_s^c)} +
		 \| h \|_{L^2(\mathbb{B}_s^c)}^2\right)\d s
		 \le \left(\|h\|_{L^p(\mathbb{B}_r)}\|h\|_{L^q(\mathbb{B}_1^c)}
		 + \|h\|_{L^2(\mathbb{B}_1^c)}^2\right)\left(\frac{r^{d-1}-1}{d}\right)
	\]
	is finite. This and the definition of the vector space $\mathcal{F}_p(\R^d)$
	together imply that \eqref{eq:F_p} holds; \eqref{eq:G_p} is proved similarly.
\end{proof}

It follows from local integrability that the Fourier transform of every
function $ h \in\mathcal{F}_p(\R^d)$
($p>1$) is a well-defined distribution. In particular,  both $f= h *\tilde{ h }$ and
$| h |*|\tilde{ h }|$ are also well-defined distributions.
Of course, all such distributions are positive definite as well. The following shows that both
$h*\tilde{h}$ and $|h|*\tilde{|h|}$ are in fact fairly
nice positive-definite functions from $\R^d$ to the extended real numbers $\R\cup\{\infty\}$.
	
\begin{lemma}\label{lem:PD}
	If $ h \in\mathcal{F}_p(\R^d)$ for some $p>1$, then
	$ h *\tilde{ h }$ and $| h |*|\tilde{ h }|$ are functions of positive type. Moreoever,
	for every $r>0$,
	\begin{equation}\label{eq:PD}
		\sup_{\|x\|>2r} \left| \left(  h * \tilde{ h }\right)(x)\right| \le
		\sup_{\|x\|>2r} \left( | h |*|\tilde{ h }|\right)(x) \le
		2 \| h \|_{L^p(\mathbb{B}_r)}
		 \| h \|_{L^q(\mathbb{B}_r^c)} + \| h \|_{L^2(\mathbb{B}_r^c)}^2,
	\end{equation}
	and
	\begin{equation}\label{eq:PD:int}
		\int_{\mathbb{B}_r}\left( | h |*|\tilde{ h }|\right)(x)\,\d x
		\lesssim\int_0^{2r} s^{d-1}\left( \| h \|_{L^p(\mathbb{B}_s)}
		 \| h \|_{L^q(\mathbb{B}_s^c)} +
		 \| h \|_{L^2(\mathbb{B}_s^c)}^2\right)\d s,
	\end{equation}
	where the implied constant depends only on $d$.
\end{lemma}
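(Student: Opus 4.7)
My plan is to prove the pointwise bound \eqref{eq:PD} first by a H\"older-plus-Cauchy--Schwarz splitting, deduce \eqref{eq:PD:int} from it by radial integration, and then combine these quantitative estimates with Lemma \ref{lem:F_p} to verify the two defining conditions of positive type (Definition \ref{def:PT}) for both $h*\tilde h$ and $|h|*|\tilde h|$. The left inequality of \eqref{eq:PD} is nothing but the triangle inequality $|(h*\tilde h)(x)|\le(|h|*|\tilde h|)(x)$, so I need only bound
\[
(|h|*|\tilde h|)(x)=\int_{\R^d}|h(y)|\,|h(y-x)|\,\d y
\]
for $\|x\|>2r$. The hypothesis $\|x\|>2r$ forces $\mathbb{B}_r$ and $\mathbb{B}_r(x)$ to be disjoint, so I split $\R^d$ into the three pieces $\{y\in\mathbb{B}_r\}$, $\{y\in\mathbb{B}_r(x)\}$, and $\{y\in\mathbb{B}_r^c\cap\mathbb{B}_r(x)^c\}$. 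On the first piece the translate $y-x$ ranges over $\mathbb{B}_r(-x)\subset\mathbb{B}_r^c$, so H\"older with conjugate exponents $p,q$ contributes at most $\|h\|_{L^p(\mathbb{B}_r)}\|h\|_{L^q(\mathbb{B}_r^c)}$; the second piece is symmetric after the substitution $z=y-x$ and gives the same bound; and Cauchy--Schwarz on the third piece yields $\|h\|_{L^2(\mathbb{B}_r^c)}^2$. Summing these three contributions produces \eqref{eq:PD}.

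For the integral bound, let $\Psi(s):=2\|h\|_{L^p(\mathbb{B}_s)}\|h\|_{L^q(\mathbb{B}_s^c)}+\|h\|_{L^2(\mathbb{B}_s^c)}^2$ and apply \eqref{eq:PD} at every $x\neq 0$ with $r=\|x\|/2$ (the boundary case of the strict inequality is reached by sending $r\uparrow\|x\|/2$ and using the left-continuity of $\Psi$, which follows from monotone convergence since spheres have Lebesgue measure zero). Radial integration then yields
\[
\int_{\mathbb{B}_r}(|h|*|\tilde h|)(x)\,\d x \;\le\; c_d\int_0^r s^{d-1}\Psi(s/2)\,\d s \;=\; 2^d c_d\int_0^{r/2}t^{d-1}\Psi(t)\,\d t
\]
after the change of variables $t=s/2$; since $r/2\le 2r$ this is further dominated by $2^d c_d\int_0^{2r}t^{d-1}\Psi(t)\,\d t$, which is \eqref{eq:PD:int} up to a constant depending only on $d$. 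Local integrability of $h*\tilde h$ and $|h|*|\tilde h|$ is then immediate via Lemma \ref{lem:F_p}(3).

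To finish the verification of Definition \ref{def:PT} I still need uniform continuity of the restrictions to $\mathbb{B}_R^c$ (for each $R>0$) and positive-definiteness in the distributional sense. For uniform continuity I will decompose $h=h_0+h_\infty$ with $h_0:=h\mathbf{1}_{\mathbb{B}_{R/4}}$ and expand the convolution into four terms: $h_0*\tilde{h}_0$ is supported in $\mathbb{B}_{R/2}$ and so vanishes on $\mathbb{B}_R^c$; the two cross terms are uniformly continuous on all of $\R^d$ because $h_0\in L^p(\R^d)$ and $h_\infty\in L^q(\R^d)$ with $p^{-1}+q^{-1}=1$ (Lemma \ref{lem:F_p}(2)); and $h_\infty*\tilde{h}_\infty\in C_0(\R^d)$ because $h_\infty\in L^2(\R^d)$ (again Lemma \ref{lem:F_p}(2)). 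The same decomposition, with $h$ replaced by $|h|$, handles $|h|*|\tilde h|$. For distributional positive-definiteness I will use the annular approximants $h_n:=h\mathbf{1}_{\mathbb{B}_n\setminus\mathbb{B}_{1/n}}$, which lie in $L^2(\R^d)$ by Lemma \ref{lem:F_p}(2); the continuous bounded function $h_n*\tilde{h}_n$ has Fourier transform $|\widehat{h_n}|^2\ge 0$ and is therefore classically positive definite, and dominated convergence with dominator $|h|*|\tilde h|$ transfers the inequality $\int(h_n*\tilde{h}_n)(\phi*\tilde\phi)\,\d x\ge 0$ to the limit for every $\phi\in C_c^\infty(\R^d)$, yielding positive-definiteness of $h*\tilde h$ in the distributional sense. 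The analogous truncation $|h|\mathbf{1}_{\mathbb{B}_n\setminus\mathbb{B}_{1/n}}$ handles $|h|*|\tilde h|$. This last limiting step is the one I expect to require the most care, because $h$ need not lie in $L^2(\R^d)$ globally, so the Fourier transform of $h*\tilde h$ is only a tempered distribution \emph{a priori}; the annular truncation is precisely what makes the Fourier argument available at each approximating level, and the pointwise domination from the first paragraph is what justifies the passage to the limit.
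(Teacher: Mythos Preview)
Your proof is correct. The quantitative parts --- the three-region H\"older/Cauchy--Schwarz splitting for \eqref{eq:PD} and the radial integration for \eqref{eq:PD:int} --- coincide with the paper's argument essentially line for line (the paper splits $\R^d=\mathbb{B}_r\cup\mathbb{B}_r^c$ first and then subdivides $\mathbb{B}_r^c$, but the resulting three pieces and their bounds are identical to yours).

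Where you differ is in verifying the two defining properties of ``positive type.'' For uniform continuity on $\mathbb{B}_R^c$ the paper uses a mollifier approximation: it proves a mixed estimate
\[
\sup_{\|x\|>2r}(|h_1|*|\tilde h_2|)(x)\le \|h_1\|_{L^p(\mathbb{B}_r)}\|h_2\|_{L^q(\mathbb{B}_r^c)}+\|h_2\|_{L^p(\mathbb{B}_r)}\|h_1\|_{L^q(\mathbb{B}_r^c)}+\|h_1\|_{L^2(\mathbb{B}_r^c)}\|h_2\|_{L^2(\mathbb{B}_r^c)},
\]
applies it with $h_2=h-\varphi_\varepsilon*h$, and uses $\|g-\varphi_\varepsilon*g\|\to 0$ in the three relevant norms to get uniform convergence of $\varphi_\varepsilon*(h*\tilde h)$ to $h*\tilde h$ off any ball. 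Your near/far decomposition $h=h_0+h_\infty$ is a genuinely different and arguably more direct route: it avoids approximate identities entirely and reduces the matter to the standard facts $L^p*L^q\subset C_0$ (for conjugate $p,q$) and $L^2*L^2\subset C_0$, which are immediately available from Lemma~\ref{lem:F_p}(2). The paper's mollifier route, on the other hand, produces the mixed $h_1,h_2$ estimate as a by-product, which is of independent use later in the paper.

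For distributional positive-definiteness the paper takes this as already observed (in the paragraph preceding the lemma). Your annular-truncation argument via $h_n=h\mathbf{1}_{\mathbb{B}_n\setminus\mathbb{B}_{1/n}}\in L^2$ and dominated convergence against $|h|*|\tilde h|\in L^1_{\text{loc}}$ makes this step fully explicit, which is a useful addition.
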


\begin{proof}
	The argument  hinges loosely on old ideas that are
	motivated by the literature on potential theory of L\'evy processes; see in particular
	Hawkes \cite{Hawkes,Hawkes1984}.
	
	First of all, consider the case that $h$ is, in addition, non negative. In that case,
	$(h*\tilde{h})(x)$ is a well-defined Lebesgue integral for every $x\in\R^d$,
	though it might (or might not) diverge.
	We will show, among other things, that $(h*\tilde{h})(x)$ cannot diverge
	unless possibly when $x=0$. From here on,
	let us choose and fix some $r>0$ and $x\in\R^d$ such that $\|x\|>2r$.
	
	On one hand, if $y\in \mathbb{B}_r$ then certainly $\|x-y\|>r$, whence
	\[
		\int_{\mathbb{B}_r} | h(y) h(y-x) | \,\d y \le
		\| h\|_{L^p(\mathbb{B}_r)}\| h\|_{L^q(\mathbb{B}_r^c)},
	\]
	by H\"older's inequality.
	On the other hand, H\"older's inequality ensures that for every $z\in\R^d$,
	\begin{align*}
		\int_{\mathbb{B}_r^c} | h(y) h (y-z) | \,\d y &\le \int_{\substack{%
			\|y\|>r\\\|z-y\|<r}}| h (y) h (y-z) |\,\d y +
			\int_{\substack{%
			\|y\|>r\\\|z-y\|>r}}| h (y) h (y-z)|\,\d y\\
		&\le \| h\|_{L^p(\mathbb{B}_r)}\| h\|_{L^q(\mathbb{B}_r^c)} +
			\| h\|_{L^2(\mathbb{B}_r^c)}^2.
	\end{align*}
	If $h\in\mathcal{F}_p(\R^d)$ then certainly
	$|h|\in\mathcal{F}_p(\R^d)$ also.
	Combine the above bounds to obtain \eqref{eq:PD}.
	
	Now that we have established \eqref{eq:PD}, we obtain \eqref{eq:PD:int} by
	merely observing that
	\[
		\int_{\mathbb{B}_r} ( |h| *|\tilde{ h }|)(x)\,\d x
		\le \int_{\mathbb{B}_r}\Phi(\|x\|/2)\,\d x \propto
		\int_0^{2r}\Phi(s) s^{d-1}\,\d s,
	\]
	where $\Phi(t) := \sup_{\|x\|>2t}(|h| * |\tilde{ h }|)(x)$ for every $t>0$.
	Apply the already-proved part of the lemma,
	together with Lemma \ref{lem:F_p}, in order
	to see that $|h|*|\tilde{h}|\in L^1_{\text{\it loc}}(\R^d)$.
	
	Finally, we observe from the same argument that,
	whenever $h_1,h_2\in\mathcal{F}_p(\R^d)$,
	\[
		\sup_{\|x\|>2r}\left( |h_1|*|\tilde{h}_2|\right)(x)
		\le \|h_1\|_{L^p(\mathbb{B}_r)}\|h_2\|_{L^q(\mathbb{B}_r^c)}
		+\|h_2\|_{L^p(\mathbb{B}_r)}\|h_1\|_{L^q(\mathbb{B}_r^c)}
		+\|h_1\|_{L^2(\mathbb{B}_r^c)}\|h_2\|_{L^2(\mathbb{B}_r^c)}.
	\]
	Choose and fix an approximation
	to the identity $\{\varphi_\varepsilon\}_{\varepsilon>0}$ such that
	$\varphi_\varepsilon\in C^\infty_c(\R^d)$ for every
	$\varepsilon>0$.
	We may apply the preceding displayed inequality, once with $(h_1\,,h_2)=(h\,,h -(\varphi_\varepsilon*h))$
	and once with $(h_1\,,h_2)=(|h|\,,|h| -(\varphi_\varepsilon*|h|))$, in order to see that
	as $\varepsilon\downarrow0$,
	$(\varphi_\varepsilon* |h|*|\tilde{h}|)(x)
	\to (|h|*|\tilde{h}|)(x)$ and	
	$(\varphi_\varepsilon* h*\tilde{h})(x)
	\to (h*\tilde{h})(x)$, both valid uniformly for all $x\in\R^d$ that satisfy $\|x\|>2r$. This
	uses only the classical fact that
	\[
		\lim_{\varepsilon\downarrow0}
		\left( \left\| g - (\varphi_\varepsilon*g) \right\|_{L^p(\mathbb{B}_r)}
		+ \left\| g - (\varphi_\varepsilon*g) \right\|_{L^q(\mathbb{B}_r^c)}
		+ \left\| g - (\varphi_\varepsilon*g) \right\|_{L^2(\mathbb{B}_r^c)}
		\right)=0,
	\]
	for either $g=h$ or $g=|h|$ (see Stein \cite{Stein}), and readily implies the uniform
	continuity and boundedness of
	$h*\tilde{h}$ and $|h|*|\tilde{h}|$ off $\mathbb{B}_r$ for arbitrary $r>0$.
	This concludes the proof
	of the lemma.
\end{proof}

\section{On Condition (\ref{Dalang})}\label{sec:Dalang}

As was mentioned in the Introduction, it was shown by Dalang \cite{Dalang1999}
that when $f$ is tempered and
non negative (the latter being the more important condition),
Condition \eqref{Dalang} is an optimal sufficient condition
for the existence of a unique random-field solution to the SPDE \eqref{SHE}.
In this section, we say a few words about Dalang's Condition \eqref{Dalang}.

First recall that the vector space $H_{-1}(\R^d)$ denotes
the completion of all rapidly-decreasing, real-valued functions
on $\R^d$ in the norm
\[
	\| h \|_{H_{-1}(\R^d)} :=
	\left( \int_{\R^d} \frac{|\widehat{ h }(x)|^2}{1+\|x\|^2}\,\d x\right)^{1/2}.
\]
It follows immediately that $H_{-1}(\R^d)$ is Hilbertian, once endowed with the above norm and
the associated inner product,
\[
	\langle \psi_1\,, \psi_2\rangle_{H_{-1}(\R^d)} :=
	\int_{\R^d}\frac{\widehat{\psi}_1(x)\overline{\widehat{\psi}_2(x)}}{1+\|x\|^2}\,\d x.
\]
This explains why the two conditions in
\eqref{Dalang} are equivalent.

Next, let us define $\bm{v}_\lambda$ to be the \emph{$\lambda$-potential density}
of the heat semigroup on $\R^d$
for every $\lambda>0$. That is,
\begin{equation}\label{HRD}
	\bm{v}_\lambda(x) = \int_0^\infty\e^{-\lambda t} \bm{p}_t(x)\,\d t
	\qquad\text{for all $x\in\R^d$},
\end{equation}
where $\bm{p}$ denotes the heat kernel, defined as
\begin{equation}\label{p}
	\bm{p}_t(x) := \frac{1}{(2\pi t)^{d/2}}\exp\left(-\frac{\|x\|^2}{2t}\right)\qquad
	\text{for all $t>0$ and $x\in\R^d$}.
\end{equation}

Note that $\lambda \bm{v}_\lambda$ is  a probability density function on $\R^d$
for every $\lambda>0$.

A general theorem of Foondun and Khoshnevisan \cite{FoondunKhoshnevisan2013} implies that, when
$h\ge0$ (and hence $f\ge0$), Dalang's condition
\eqref{Dalang} holds if and only if \footnote{In general, the proof of \eqref{FK}
	requires some effort. But, for example when $h\in L^1(\R^d)\cap L^2(\R^d)$,
	Young's inequality yields $f\in \cap_{\nu\in[1,\infty]} L^\nu(\R^d)$
	and hence \eqref{FK} is a direct consequence of Parseval's identity and the elementary facts that:
	(i) The Fourier transform of $\bm{v}_\lambda$ is
	$\widehat{\bm{v}}_\lambda(z) :=\int_{\R^d}\exp\{ix\cdot z\}\bm{v}_\lambda(x)\,\d x
	= 2[2\lambda + \|z\|^2]^{-1}$ for all $z\in\R^d$; and
	(ii) $\widehat{f}(z) := \int_{\R^d}\exp\{ix\cdot z\}f(x)\,\d x
		=|\hat{h}(z)|^2$ for all $z\in\R^d$.}
\begin{equation}\label{FK}
	\int_{\R^d}\bm{v}_\lambda(x)f(x)\,\d x<\infty
	\quad\text{for one, hence all, $\lambda>0$}.
\end{equation}
An earlier result, applicable in the present context, can be found in Peszat \cite[Theorem 0.1]{Peszat2002}.

While \eqref{Dalang} and \eqref{FK} are equivalent formulations
of the same condition, each formulation has its technical advantages:
On one hand, it is clear from Condition \eqref{Dalang} that if $h$ is a nonnegative element of
$H_{-1}(\R^d)$ then $\varphi*h$ is also a nonnegative element of
$H_{-1}(\R^d)$ for every probability density $\varphi$ on $\R^d$. Thus, we see from \eqref{Dalang}
that $H_{-1}(\R^d)$ is closed under ``smoothing.''

On the other hand, the phrasing of Condition \eqref{FK} obviates the assertion that $H_{-1}(\R^d)$
is closed under ``miniorization'': If $0\le h_1\le h_2$ are measurable
and $h_2\in H_{-1}(\R^d)$, then $h_1$ is in $H_{-1}(\R^d)$ also. This 
minorization property will play a keyrole in the proof of spatial ergodicity in Theorem
\ref{th:main:intro}.

Let us note also that if $h\ge0$ and $h\in\mathcal{F}_p(\R^d)$
for some $p>1$, then $f$ is bounded uniformly on $\mathbb{B}_r^c$ for all $r>0$.
Because in addition $\bm{v}_\lambda$ is integrable,
it follows from \eqref{FK} that, in the present setting wherein
$h\ge0$ and $h\in\cup_{p>1}\mathcal{F}_p(\R^d)$,
the harmonic-analytic condition \eqref{Dalang}---equivalently
the potential-theoretic condition \eqref{FK}---%
is equivalent to the following local version of \eqref{FK}:
\begin{equation}\label{FK:loc}
	\int_{\mathbb{B}_1}\bm{v}_\lambda(x)f(x)\,\d x<\infty
	\qquad\text{for one, hence all, $\lambda>0$.}
\end{equation}
Next, we re-interpret \eqref{FK:loc}:
It is well known, and easy to verify directly (see, for example, Khoshnevisan
\cite[Section 3.1, Chapter 10]{KhMPP}), that
\begin{equation}\label{pot:bound}
	\bm{v}_\lambda(x)\asymp \|x\|^{-d+1}\omega_d(\|x\|)
	\qquad\text{uniformly for all $x\in\mathbb{B}_1$},
\end{equation}
where $\omega_d$ was defined in \eqref{eq:omega}.
Thus, when $h\ge0$ and $h\in\cup_{p>1}\mathcal{F}_p(\R^d)$,
\begin{equation}\label{h:kappa}
	h\in H_{-1}(\R^d)\quad\text{iff}\quad
	\int_{\mathbb{B}_1} \|x\|^{-d+1}\omega_d(\|x\|)f(x)\,\d x<\infty.
\end{equation}

Now consider the general case where $h\in\cup_{p>1}\mathcal{F}_p(\R^d)$
is possibly signed. Define
\begin{equation}\label{bar(f)}
	\bar{f}(r) := \sup_{\|x\|>r}\left( |h|*|\tilde{h}|\right)(x)
	\qquad\text{for every $r>0$}.
\end{equation}
Since $|f(x)|\le\bar{f}(\|x\|)$ for all $x\in\R^d$,
we can apply \eqref{h:kappa} with $(h\,,f)$ replaced with $(|h|\,,|h|*|\tilde{h}|)$
in order to see that
\[
	\text{if}\quad\int_0^1\bar{f}(r)\omega_d(r)\,\d r<\infty,
	\quad\text{then}\quad |h|\in H_{-1}(\R^d).
\]
If $f\ge0$ and $x\mapsto f(x)$ is a radial function on $\R^d$ that decreases with $\|x\|$,
then $\bar{f}(\|x\|)= f(x)$, and the above sufficient condition for $|h|=h$
to be in $H_{-1}(\R^d)$  appears
earlier in the literature, in the context of well-posedness for SPDEs. 
See Dalang and Frangos \cite{DalangFrangos}, Karczewska
and Zabczyk\cite{KZ}, Peszat \cite{Peszat2002},
and Peszat and Zabczyk \cite{PeszatZabczyk2000}. 
Closely-related results can be found in Cardon-Weber and Millet
\cite{CardonWeberMillet}, Dalang \cite{Dalang1999}, 
Foondun and Khoshnevisan \cite{FoondunKhoshnevisan2013},
and Millet and Sanz-Sol\'e \cite{MilletSanz}.

Recall the vector space $\cup_{p>1}\mathcal{G}_p(\R^d)$ (Definition
\ref{def:F_p}) and the inequalities of Lemma \ref{lem:PD}  in order to deduce the following.

\begin{lemma}\label{lem:Dalang:Lp}
	If $h\in\mathcal{G}_p(\R^d)$ for some $p>1$, then $|h|\in H_{-1}(\R^d)$.
	In particular, $h\in\cup_{p>1}\mathcal{G}_p(\R^d)$ implies that
	$\int_{\R^d}\bm{v}_\lambda(x)|f(x)|\,\d x<\infty$
	for some, hence all, $\lambda>0$.
\end{lemma}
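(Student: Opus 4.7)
The plan is to reduce the first claim to the sufficient condition stated immediately above the lemma, namely the implication that $\int_0^1\bar{f}(r)\omega_d(r)\,\d r<\infty$ forces $|h|\in H_{-1}(\R^d)$, and then verify that integrability directly from the hypothesis $h\in\mathcal{G}_p(\R^d)$ by combining Lemma~\ref{lem:PD} with a doubling property of $\omega_d$. Since $|h|\ge0$ and $|h|\in\mathcal{G}_p(\R^d)\subseteq\mathcal{F}_p(\R^d)$, Lemma~\ref{lem:PD} applies verbatim to $|h|$ and its auto-convolution $|h|*|\tilde{h}|$.

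Concretely, the tail bound \eqref{eq:PD}, applied with $h$ replaced by $|h|$, gives
\[
    \bar{f}(2r)\le 2\|h\|_{L^p(\mathbb{B}_r)}\|h\|_{L^q(\mathbb{B}_r^c)}+\|h\|_{L^2(\mathbb{B}_r^c)}^2
    \qquad\text{for every }r>0.
\]
The substitution $\rho=2r$ in the target integral yields
\[
    \int_0^1\bar{f}(\rho)\omega_d(\rho)\,\d\rho
    = 2\int_0^{1/2}\bar{f}(2r)\omega_d(2r)\,\d r,
\]
so the desired finiteness will follow as soon as we observe that $\omega_d(2r)\asymp\omega_d(r)$ for $r\in(0\,,1/2)$---which is trivial when $d=1$ or $d\ge3$ and amounts to a short $\log$-ratio computation when $d=2$. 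Inserting this equivalence together with the tail bound into the displayed identity produces a constant multiple of the $\mathcal{G}_p(\R^d)$-integrand in \eqref{cond:omega}, which is finite by hypothesis; hence $|h|\in H_{-1}(\R^d)$.

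The second assertion is then a formal consequence of the criterion \eqref{FK}, applied to the nonnegative element $|h|\in H_{-1}(\R^d)$ and its spatial correlation $|h|*|\tilde{h}|$: this criterion furnishes $\int_{\R^d}\bm{v}_\lambda(x)\,(|h|*|\tilde{h}|)(x)\,\d x<\infty$ for some, and thus every, $\lambda>0$. The pointwise bound $|f(x)|=|(h*\tilde{h})(x)|\le(|h|*|\tilde{h}|)(x)$ valid at every $x\in\R^d$ (wherever either side is defined) then completes the argument by monotonicity of the integral.

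There is no deep obstacle here; the argument is essentially bookkeeping between the pointwise inequality \eqref{eq:PD} and the defining integrability condition \eqref{cond:omega}. The only point that warrants any care---though it is ultimately routine---is the verification of $\omega_d(2r)\asymp\omega_d(r)$ near the origin in the borderline dimension $d=2$, and making sure that the infinitesimal discrepancy between the strict inequality defining $\bar{f}$ in \eqref{bar(f)} and the closed inequality appearing in integration does not affect any measures.
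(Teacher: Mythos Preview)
Your proposal is correct and follows essentially the same approach as the paper. The paper's own proof is a single sentence pointing to the definition of $\mathcal{G}_p(\R^d)$ and the inequalities of Lemma~\ref{lem:PD}; you have simply spelled out the change of variables $\rho=2r$ and the doubling estimate $\omega_d(2r)\asymp\omega_d(r)$ that make this work, which is exactly the intended argument.
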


In light of Theorem 1.2 of Foondun and Khoshnevisan \cite{FoondunKhoshnevisan2013},
Lemma \ref{lem:Dalang:Lp} implies a precise version of the somewhat
subtle assertion that sufficient  integrability of $h$ ensures
good decay at infinity of the Fourier transform of $|h|$.

\section{Stochastic convolutions}\label{sec:SC}
If $\Phi=\{\Phi(t\,,x)\}_{t\ge0,x\in\R^d}$ is a space-time random field, then
for all real numbers $\beta>0$ and $k\ge1$, we may define
\begin{equation}\label{N}
	\mathcal{N}_{\beta,k}(\Phi) := \sup_{t\ge0}\sup_{x\in\R^d}\e^{-\beta t}\|\Phi(t\,,x)\|_k.
\end{equation}
It is clear that $\Phi\mapsto\mathcal{N}_{\beta,k}(\Phi)$ defines a norm for
every choice of $\beta>0$ and $k\ge1$.
These norms were first introduced in \cite{FoondunKhoshnevisan2009}; see also
\cite{ConusKh2012}.
Corresponding to every $\mathcal{N}_{\beta,k}$, define $\mathbb{W}_{\beta, k}$ to
be the collection of all predictable random fields
$\Phi$ such that $\mathcal{N}_{\beta,k}(\Phi)<\infty$. We may think of elements of
$\mathbb{W}_{\beta,2}$ as \emph{Walsh-integrable random fields with Lyapunov exponent
$\le\beta$}. It is easy to see that each
$(\mathbb{W}_{\beta,2}\,,\mathcal{N}_{\beta,k})$ is a Banach space.

Suppose that the underlying probability space $(\Omega\,,\mathcal{F},\P)$ is large enough
to carry a space-time white noise $\xi$ (if not then enlarge it in the usual way).
Using that noise, we may formally
define, for every fixed measurable function $h:\R^d\to\R$,  a new noise $\eta^{(h)}$ as follows:
\begin{equation}\label{eta^h}
	\eta^{( h )}(\d s\,\d x) := \int_{\R^d} h (x-y)\,\xi(\d s\,\d y)\,\d x.
\end{equation}
Somewhat more precisely, if $H$ is a predictable random field such that
\[
	\E\int_0^t\d s\int_{\R^d}\d y\
	\left|  \left( H(s)*\tilde{ h }\right)(y)\right|^2<
	\infty\qquad\text{for every $t>0$},
\]
then Walsh's theory of stochastc integration ensures that the Walsh stochastic integral
\[
	\int_{(0,t)\times\R^d} H(s\,,x)\,\eta^{( h )}(\d s\,\d x)
	:= \int_{(0,t)\times\R^d} \left( H(s)*\tilde{ h }\right)(y)\,\xi(\d s\,\d y)
\]
is well-defined for every $t\ge0$, and in fact defines a continuous, mean-zero,
$L^2(\P)$ martingale indexed by $t\ge0$. Moreover, the variance of this martingale
at time $t>0$ is
\begin{align}\label{SID}
	\E\left(\left| \int_{(0,t)\times\R^d} H(s\,,x)\,\eta^{( h )}(\d s\,\d x)\right|^2\right)
		&=\E\int_0^t\d s\int_{\R^d}\d y\
		\left|  \left( H(s)*\tilde{ h }\right)(y)\right|^2\\\notag
	&=\int_0^t\d s\int_{\R^d}\d y\int_{\R^d}\d z\
		\E\left[H(s\,,y)H(s\,,z)\right]f(y-z),
\end{align}
provided for example that the preceding integral is absolutely convergent.
(As it is case elsewhere in this paper, $f$ is defined in terms of $h$ via \eqref{f=h*h}.)
It is easy to see from this that $\eta^{(h)}$ is a particular construction of the noise
$\eta$ of the Introduction (see also Conus et al \cite{CJKS2013}), but has the advantage that
it provides a coupling $h\mapsto \eta^{(h)}$ that works simultaneously for many different
choices of $h$, whence spatial correlation functions $f$.

The preceding stochastic integration (see \eqref{SID}) frequently
allows for the integeration of a large family of
predictable random fields $H$. The following simple result
highlights a large subclass of such  random fields when $h\in\cup_{p>1}\mathcal{F}_p(\R^d)$.

\begin{lemma}\label{lem:convergent}
	Suppose $ h \in\mathcal{F}_p(\R^d)$ for some $p>1$, and $H$ is
	a predictable process for which there exists a real number $r>0$ such that
	\begin{equation}\label{cond:H}
		\sup_{s\in[0,T]}\sup_{y\in\R^d}\E\left(|H(s\,,y)|^2\right)<\infty
		\quad\text{and}\quad
		\E\left(|H(t\,,x)|^2\right)=0\quad
		\text{for every $t>0$ and $x\in \mathbb{B}_r^c$}.
	\end{equation}
	Then, the final integral in \eqref{SID}
	is absolutely convergent and hence \eqref{SID} is valid  for every $t>0$.
\end{lemma}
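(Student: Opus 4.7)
The plan is to show absolute convergence by a direct Cauchy--Schwarz bound, followed by the change of variables $w=y-z$ and an appeal to the integral estimate already established in Lemma \ref{lem:PD}.

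First, by the Cauchy--Schwarz inequality,
\[
	\left|\E\left[H(s,y)H(s,z)\right]\right| \le \sqrt{\E(|H(s,y)|^2)\E(|H(s,z)|^2)}.
\]
The hypothesis \eqref{cond:H} tells us that $\E(|H(s,y)|^2)=0$ for all $y\in\mathbb{B}_r^c$, and that $M:=\sup_{s\in[0,T]}\sup_{y\in\R^d}\E(|H(s,y)|^2)$ is finite for every $T>0$. Therefore, for every fixed $t>0$, the absolute value of the triple integral in \eqref{SID} on $[0,t]$ is dominated by
\[
	Mt\int_{\mathbb{B}_r}\d y\int_{\mathbb{B}_r}\d z\ |f(y-z)|.
\]
Since $f=h*\tilde h$, we have the pointwise bound $|f(w)|\le(|h|*|\tilde h|)(w)$ for every $w\in\R^d$, so that the preceding quantity is in turn at most
\[
	Mt\int_{\mathbb{B}_r}\d y\int_{\mathbb{B}_r}\d z\ (|h|*|\tilde h|)(y-z).
\]

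Next, I would apply Fubini's theorem with the substitution $w=y-z$: writing the right-hand side as
\[
	Mt\int_{\R^d}(|h|*|\tilde h|)(w)\,|\mathbb{B}_r\cap(w+\mathbb{B}_r)|\,\d w,
\]
and noting that the intersection $\mathbb{B}_r\cap(w+\mathbb{B}_r)$ is empty unless $w\in\mathbb{B}_{2r}$, with Lebesgue measure at most $|\mathbb{B}_r|$, we obtain the crude but sufficient bound
\[
	Mt|\mathbb{B}_r|\int_{\mathbb{B}_{2r}}(|h|*|\tilde h|)(w)\,\d w.
\]
This is the point at which Lemma \ref{lem:PD} (specifically \eqref{eq:PD:int}) applies, producing the estimate
\[
	\int_{\mathbb{B}_{2r}}(|h|*|\tilde h|)(w)\,\d w \lesssim \int_0^{4r}s^{d-1}\left(\|h\|_{L^p(\mathbb{B}_s)}\|h\|_{L^q(\mathbb{B}_s^c)} + \|h\|_{L^2(\mathbb{B}_s^c)}^2\right)\d s,
\]
and Lemma \ref{lem:F_p}(3) (equation \eqref{eq:F_p}) guarantees that this last integral is finite because $h\in\mathcal{F}_p(\R^d)$. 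Combining the displayed bounds yields the claimed absolute convergence, which also justifies the equality \eqref{SID} via Fubini.

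The only mild obstacle is the change-of-variables/Fubini bookkeeping that converts the double integral over $\mathbb{B}_r\times\mathbb{B}_r$ to a single integral of $|h|*|\tilde h|$ over $\mathbb{B}_{2r}$; once this reduction is made, everything follows immediately from the structural results on $\mathcal{F}_p(\R^d)$ developed in \S\ref{sec:FPT}, and there is no genuine analytic difficulty beyond what has already been invested in Lemmas \ref{lem:F_p} and \ref{lem:PD}.
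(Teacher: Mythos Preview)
Your proof is correct and follows essentially the same route as the paper's own argument: Cauchy--Schwarz to reduce to the integral of $|f(y-z)|$ over $\mathbb{B}_r\times\mathbb{B}_r$, the substitution $w=y-z$ (the paper phrases this step as ``the triangle inequality''), and then the appeal to \eqref{eq:PD:int} from Lemma~\ref{lem:PD} together with Lemma~\ref{lem:F_p}. You are in fact slightly more explicit about the Fubini/change-of-variables bookkeeping than the paper is.
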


In particular, we may consider arbitrary non-random functions $H:\R_+\times\R^d\to\R$
of compact support in order to learn from Lemma \ref{lem:convergent}
and the second identity in \eqref{SID} that when $h\in\cup_{p>1}\mathcal{F}_p(\R^d)$,

\begin{proof}[Proof of Lemma \ref{lem:convergent}]
	Choose and fix an arbitrary $t>0$. In accord with our earlier
	remarks, and thanks to the Cauchy--Schwarz inequality,
	it suffices to prove that
	\[
		J := \int_0^t\d s\int_{\mathbb{B}_r}\d y\int_{\mathbb{B}_r}\d z\ \|H(s\,,y)\|_2\|H(s\,,z)\|_2
		\left| f(y-z)\right|
		<\infty\qquad\text{for every $t>0$}.
	\]
	But the triangle inequality readily yields
	\[
		J\le |\mathbb{B}_r|
		\left(\int_0^t\sup_{y\in\R^d}\|H(s\,,y)\|_2^2\,\d s\right) \left(
		\int_{\mathbb{B}_{2r}} \left(| h |*|\tilde{ h }|\right)(w)\d w\right),
	\]
	which is finite thanks to \eqref{cond:H} and Lemma \ref{lem:PD}; see in particular
	\eqref{eq:PD:int}.
\end{proof}

The second portion of \eqref{cond:H} involves a compact-support condition which can sometimes
be reduced to a decay-type condition. We exemplify that next for a specific
family of the form $H(s\,,y) = \bm{p}_{t-s}(x-y)Z(s\,,y)$, where $t>s$ and $x\in\R^d$
are fixed and $\bm{p}$ denotes the heat kernel
[see \eqref{p}]. 
With this choice, the following ``stochastic convolution'' is a well-defined random field
provided that it is indeed defined properly as a Walsh integral for every $t>0$ and $x\in\R^d$:
\begin{equation}\label{SC}
	\left( \bm{p}\circledast Z\eta^{( h )}\right)(t\,,x) := \int_{(0,t)\times\R^d}
	\bm{p}_{t-s}(x-y) Z(s\,,y)\,\eta^{( h )}(\d s\,\d y).
\end{equation}

For every $k\ge2$, let $z_k^k$ denote the optimal constant of the $L^k(\P)$-form of
the Burkholder--Davis--Gundy inequality \cite{Burkholder,BDG,BG}; 
that is, for every continuous
$L^2(\P)$-martingale $\{M_t\}_{t\ge0}$,
and all real numbers $k\ge2$ and $t\ge0$,
\[
	\E\left( |M_t|^k\right) \le z_k^k\E\left( \langle M\rangle_t^{k/2}\right).
\]
Then,
\begin{equation}\label{z_k}
	z_2=1\qquad\text{and }\qquad
	z_k\le 2\sqrt{k}\qquad\text{for every $k>2$}.
\end{equation}
The first assertion is the basis of It\^o's stochastic calculus, and the second is due
to Carlen and Kree \cite{CarlenKree1991}, who also proved that $\lim_{k\to\infty} (z_k/\sqrt k) = 2$.
The exact value of $z_k$ is computed in the celebrated paper of Davis \cite{Davis1976}.

The following provides a natural condition for the stochastic convolution to be a well-defined
random field, the stochastic integral being defined in the sense of Walsh \cite{Walsh}, and extends
Propositiuon 6.1
of Conus et al \cite{CJKS2013} to the case that $f$ is possibly signed.
It might help to recall that $\bm{v}_\beta$ denotes the $\beta$-potential
kernel [see \eqref{HRD}].

\begin{lemma}[A stochastic Young inequality]\label{lem:Young}
	Suppose that $Z\in\mathbb{W}_{\beta,k}$ for some $\beta>0$, $k\ge2$,
	and that $ h \in\mathcal{G}_p(\R^d)$ for some $p>1$. Then,
	the stochastic convolution in \eqref{SC} is a well-defined Walsh
	integral,
	\[
		\mathcal{N}_{\beta,k}\left(\bm{p}\circledast Z\eta^{( h )}\right)
		\le z_k \mathcal{N}_{\beta,k}(Z) \cdot\sqrt{\frac12\int_{\R^d} \bm{v}_\beta(x)
		|f(x)|\,\d x},
	\]
	and the integral under the square root is finite.
\end{lemma}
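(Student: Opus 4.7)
The plan is to combine the Burkholder--Davis--Gundy inequality with the Chapman--Kolmogorov identity for the heat kernel, after first using Lemma \ref{lem:Dalang:Lp} to confirm that $\int_{\R^d}\bm{v}_\beta(x)|f(x)|\,\d x<\infty$; this is exactly the second conclusion of that lemma since $h\in\mathcal{G}_p(\R^d)$. The argument then proceeds in two stages: first I would derive the displayed $\mathcal{N}_{\beta,k}$-bound assuming the Walsh integral in \eqref{SC} is well defined, and then use that bound together with Lemma \ref{lem:convergent} to construct the integral in the first place by truncation.

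For the a priori estimate, the continuous martingale $t\mapsto(\bm{p}\circledast Z\eta^{(h)})(t\,,x)$ has quadratic variation, in view of the second equality in \eqref{SID},
\[
    \langle M\rangle_t = \int_0^t \d s\int_{\R^d}\!\int_{\R^d}\bm{p}_{t-s}(x-z_1)\bm{p}_{t-s}(x-z_2)\,Z(s\,,z_1)Z(s\,,z_2)\,f(z_1-z_2)\,\d z_1\,\d z_2.
\]
Applying BDG yields $\|(\bm{p}\circledast Z\eta^{(h)})(t\,,x)\|_k^2\le z_k^2\,\|\langle M\rangle_t\|_{k/2}$. I would then bound $f$ by $|f|$, pull $\|\cdot\|_{k/2}$ inside the spatial and time integrals via Minkowski's integral inequality, and apply Cauchy--Schwarz in $L^{k/2}(\P)$ together with the definition of $\mathcal{N}_{\beta,k}$ to obtain $\|Z(s\,,z_1)Z(s\,,z_2)\|_{k/2}\le \e^{2\beta s}\mathcal{N}_{\beta,k}(Z)^2$. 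The substitution $u=z_2-z_1$ followed by Chapman--Kolmogorov ($\int_{\R^d}\bm{p}_{t-s}(w)\bm{p}_{t-s}(u+w)\,\d w=\bm{p}_{2(t-s)}(u)$) collapses the resulting double spatial integral to $\int_{\R^d}\bm{p}_{2(t-s)}(u)|f(u)|\,\d u$, and a final change of variables $\tau=t-s$ then $r=2\tau$ converts the time integral into $\tfrac12$ times the $\beta$-potential of $|f|$ at the origin, producing a factor $\e^{2\beta t}$ that, after taking square roots and dividing by $\e^{\beta t}$, gives precisely the claimed $\mathcal{N}_{\beta,k}$-bound.

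For well-definedness, the integrand $\bm{p}_{t-s}(x-y)Z(s\,,y)$ of \eqref{SC} is not compactly supported, so Lemma \ref{lem:convergent} does not apply directly; I would truncate it by $\mathbf{1}_{\mathbb{B}_R(x)}(y)$, which does produce a predictable integrand satisfying \eqref{cond:H}, and argue via the a priori bound just derived (applied to the differences $Z_R-Z_{R'}$, where $Z_R$ is the truncated process) that the resulting family of Walsh integrals is Cauchy in $L^k(\P)$ as $R\to\infty$, uniformly in $(t\,,x)$. The stochastic convolution is then defined as this $L^k(\P)$ limit, and the claimed bound survives the limit. The main technical obstacle is the handling of the signed correlation $f$: the $L^2$ isometry \eqref{SID} is valid for absolutely convergent integrands, but only after passing to $|f|$ can one exploit the integrability against $\bm{v}_\beta$, which is exactly why the hypothesis is $h\in\mathcal{G}_p(\R^d)$ (incorporating the weight $\omega_d$ reflecting the local behavior of $\bm{v}_\beta$ via \eqref{pot:bound}) rather than the weaker $h\in\mathcal{F}_p(\R^d)$.
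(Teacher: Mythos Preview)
Your approach is essentially that of the paper: BDG, Minkowski's inequality inside the $(s,z_1,z_2)$-integral, Cauchy--Schwarz on $\|Z(s,z_1)Z(s,z_2)\|_{k/2}$, the semigroup identity, and then recognition of $\bm{v}_\beta$. One technical slip worth flagging: the map $t\mapsto(\bm{p}\circledast Z\eta^{(h)})(t\,,x)$ is \emph{not} a martingale, because the integrand $\bm{p}_{t-s}(x-y)$ depends on the terminal time $t$; rather, for each fixed $t$ the stochastic convolution is the terminal value of the martingale $r\mapsto\int_{(0,r)\times\R^d}\bm{p}_{t-s}(x-y)Z(s\,,y)\,\eta^{(h)}(\d s\,\d y)$, and BDG is applied to that process. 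Your quadratic-variation formula and everything downstream are unaffected by this rewording.

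The paper also dispenses with your truncation step for well-definedness: in Walsh's theory the stochastic integral exists once the corresponding $L^2$-isometry integral (your $\mathcal{Q}_2(t\,,x)$, with $|f|$ in place of $f$) is finite, and your a priori estimate with $k=2$ already establishes this directly. The truncation-and-Cauchy argument is not wrong, merely redundant.
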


\begin{proof}
	The integral under the square root is finite thanks to Lemma \ref{lem:Dalang:Lp}.
	We proceed to prove the remainder of the lemma.
	
	According to the theory of Walsh \cite{Walsh}, the random field
	$\bm{p}\circledast Z\eta^{( h )}$ is
	well defined whenever $\mathcal{Q}_2(t\,,x)<\infty$ where
	\[
		\mathcal{Q}_k(t\,,x):=
		\int_0^t\d s\int_{\R^d}\d y\int_{\R^d}\d z\ \bm{p}_{t-s}(x-y)
		\bm{p}_{t-s}(x-z)
		\|Z(s\,,y)\|_k\|Z(s\,,z)\|_k| f(y-z) |
	\]
	for every $t>0$ and $x\in\R^d$.
	Moreover (see also \eqref{SID}), in that case, the Burkholder--Davis--Gundy
	inequality yields
	\begin{align*}
		&\E\left(\left|  \left(
			\bm{p}\circledast Z\eta^{( h )}\right)(t\,,x)\right|^k\right)\\
		&\le z_k^k\E\left(\left|
			\int_0^t\d s\int_{\R^d}\d y
			\int_{\R^d}\d z\ \bm{p}_{t-s}(x-y)\bm{p}_{t-s}(x-z)
			Z(s\,,y)Z(s\,,z) f(y-z)\right|^{k/2}\right)\\
		&\le z_k^k\left[ \int_0^t\d s\int_{\R^d}\d y
			\int_{\R^d}\d z\ \bm{p}_{t-s}(x-y)\bm{p}_{t-s}(x-z)
			\|Z(s\,,y)Z(s\,,z)\|_{k/2} |f(y-z)|\right]^{k/2}\\
		&\le z_k^k\left[ \mathcal{Q}_k(t\,,x) \right]^{k/2},
	\end{align*}
	the last line holding thanks to the Cauchy--Schwarz inequality.
	It remains to prove that $\mathcal{Q}_k(t\,,x)<\infty$ for all $t>0$ and $x\in\R^d$.
	
	Since $\|Z(s\,,y)\|_k\le \exp(\beta s)\mathcal{N}_{\beta,k}(Z)$ for all $s\ge0$ and $y\in\R^d$,
	it then follows that
	\begin{align*}
		\mathcal{Q}_k(t\,,x)&\le \left[\mathcal{N}_{\beta,k}(Z)\right]^2
			\int_0^t\e^{-2\beta s}\,\d s\int_{\R^d}\d y\int_{\R^d}\d z\
			\bm{p}_{t-s}(x-y)\bm{p}_{t-s}(x-z)
			\left| f(y-z)\right|\\
		&\le \e^{2\beta t} \left[\mathcal{N}_{\beta,k}(Z)\right]^2
			\int_0^t\e^{-2\beta r}\,\d r \int_{\R^d}\d w\ \bm{p}_{2r}(w)
			|f(w)|,
	\end{align*}
	after two change of variables $[w=y-z,\ r=t-s]$, and
	thanks to the Chapman-Kolmogorov (semigroup)
	property of the heat kernel $\bm{p}$. Since
	\[
		\int_0^t \exp(-2\beta r)\bm{p}_{2r}(w)\,\d r\le\int_0^\infty
		\exp(-2\beta r)\bm{p}_{2r}(w)\,\d r=\tfrac12 \bm{v}_\beta(w),
	\]
	for every $w\in\R^d$ and $\beta>0$, this proves that
	\[
		\e^{-2\beta t}\mathcal{Q}_k(t\,,x)
		\le \tfrac12\left[\mathcal{N}_{\beta,k}(Z)\right]^2
		\int_{\R^d} \bm{v}_\beta(w)
		|f(w)|.
	\]
	This inequality completes the proof of the lemma upon taking square roots,
	as the right-hand side of the preceding inequality is independent of $(t\,,x)$.
\end{proof}

The following localization result has its roots in the earlier work of Conus et al
\cite{CJKS2013}. The advantage of
the present version is that it is based on the real-analytic study of positive-definite
functions, as exemplified in this paper, and not on Fourier-analytic methods of
Ref.\ \cite{CJKS2013} which
work only under additional technical conditions.

\begin{lemma}\label{lem:localize:noise}
	Suppose $h\in\mathcal{G}_p(\R^d)$ for some $p>1$ and
	$Z\in\mathbb{W}_{\beta,k}$
	for some $\beta>0$ and $k\ge2$. Define
	\begin{equation}\label{h:_r}
		h_r(x) := h(x) \bm{1}_{\mathbb{B}_r}(x)\qquad\text{for
		every $r>0$ and $x\in\R^d$}.
	\end{equation}
	Then, $h_r\in\mathcal{G}_p(\R^d)$ for all $r>0$, and
	\[
		\mathcal{N}_{\beta,k}\left(\bm{p}\circledast Z\eta^{( h )} -
		\bm{p}\circledast Z\eta^{( h_r)}\right) \le
		\frac{z_k \| h \|_{L^2(\mathbb{B}_r^c)}}{\sqrt{2\beta}}\,
		\mathcal{N}_{\beta,k}(Z)
		\qquad\text{for all $r>0$}.
	\]
\end{lemma}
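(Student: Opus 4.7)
The plan is to apply the stochastic Young inequality (Lemma \ref{lem:Young}) with $h$ replaced by $h - h_r = h\mathbf{1}_{\mathbb{B}_r^c}$. By the definition \eqref{eta^h}, the coupling $h \mapsto \eta^{(h)}$ is linear in $h$, and the Walsh integral is linear in its integrand, so
$$\bm{p}\circledast Z\eta^{(h)} - \bm{p}\circledast Z\eta^{(h_r)} = \bm{p}\circledast Z\eta^{(h - h_r)}.$$
Thus it suffices to control the right-hand side via Lemma \ref{lem:Young}, and then translate the resulting bound into the quantity $\|h\|_{L^2(\mathbb{B}_r^c)}$ that appears in the statement.

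Before invoking Lemma \ref{lem:Young}, I would verify that $h_r$ (and hence $h - h_r$) lies in $\mathcal{G}_p(\R^d)$. Since $|h_r| \le |h|$ pointwise, one has $\|h_r\|_{L^\nu(A)} \le \|h\|_{L^\nu(A)}$ for every $\nu \in \{p, q, 2\}$ and every measurable $A \subseteq \R^d$; the identical monotonicity applies to $h - h_r$. Hence the integrand in \eqref{cond:omega} is majorized by its value with $h$ in place of $h_r$, and $h_r, h - h_r \in \mathcal{G}_p(\R^d)$ follows from $h \in \mathcal{G}_p(\R^d)$. This also shows, via Lemma \ref{lem:F_p}, that $\|h\|_{L^2(\mathbb{B}_r^c)} < \infty$, so the stated right-hand side is finite.

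Writing $g := h - h_r = h\mathbf{1}_{\mathbb{B}_r^c}$, Lemma \ref{lem:Young} yields
$$\mathcal{N}_{\beta,k}\bigl(\bm{p}\circledast Z\eta^{(g)}\bigr) \le z_k\,\mathcal{N}_{\beta,k}(Z)\sqrt{\tfrac{1}{2}\int_{\R^d}\bm{v}_\beta(x)\,\bigl|(g*\tilde g)(x)\bigr|\,\d x},$$
and the main (and only nontrivial) remaining step is the sharp bound
$$\int_{\R^d}\bm{v}_\beta(x)\bigl|(g*\tilde g)(x)\bigr|\,\d x \le \frac{\|g\|_{L^2(\R^d)}^2}{\beta} = \frac{\|h\|_{L^2(\mathbb{B}_r^c)}^2}{\beta}.$$
Signedness of $g*\tilde g$ rules out a direct Parseval computation, so I would first dominate $|g*\tilde g| \le |g|*|\tilde g|$ pointwise. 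Then, using the representation \eqref{HRD} of $\bm{v}_\beta$, Fubini, and the Chapman--Kolmogorov identity $\bm{p}_t = \bm{p}_{t/2}*\bm{p}_{t/2}$, one computes
$$\int_{\R^d}\bm{p}_t(x)\bigl(|g|*|\tilde g|\bigr)(x)\,\d x = \bigl\|\bm{p}_{t/2}*|g|\bigr\|_{L^2(\R^d)}^2 \le \|g\|_{L^2(\R^d)}^2,$$
the last step by Young's convolution inequality and $\|\bm{p}_{t/2}\|_{L^1(\R^d)} = 1$. Integrating $\e^{-\beta t}$ over $t \in (0,\infty)$ produces the factor $1/\beta$. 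Substituting into the stochastic Young bound and using $\|g\|_{L^2(\R^d)} = \|h\|_{L^2(\mathbb{B}_r^c)}$ yields the claimed inequality. The one subtle point is handling the sign of $g*\tilde g$; the pointwise majorization $|g*\tilde g| \le |g|*|\tilde g|$ is precisely what enables the clean Hilbertian estimate above.
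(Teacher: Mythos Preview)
Your proof is correct and follows the same route as the paper: reduce to $\eta^{(g)}$ with $g=h-h_r$ via linearity, apply Lemma~\ref{lem:Young}, then bound the resulting integral. The paper's endgame is slightly more direct than yours: it uses the pointwise Young/Cauchy--Schwarz bound $|(g*\tilde g)(x)|\le\|g\|_{L^2(\R^d)}^2$ for every $x$, and then simply integrates against the probability density $\beta\bm{v}_\beta$ to pick up the factor $1/\beta$, bypassing the Chapman--Kolmogorov computation.
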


\begin{proof}
	Because $|h_r(x)|\le|h(x)|$ for all $x\in\R^d$ and $h\in\mathcal{G}_p(\R^d)$, it follows immediately that
	$h_r\in\mathcal{G}_p(\R^d)$ for every $r>0$. Thus,
	we need only concentrate on the norm inequality of the lemma.
	
	Define $\Phi_r :=  h  -  h_r =  h \bm{1}_{\mathbb{B}_r^c}$, and observe that
	\begin{align*}
		\mathcal{N}_{\beta,k}\left(\bm{p}\circledast Z\eta^{( h )} -
		\bm{p}\circledast Z\eta^{( h_r)}\right)
		&= \mathcal{N}_{\beta,k}\left(\bm{p}\circledast Z\eta^{(\Phi_r)} \right)\\
		&\le z_k\mathcal{N}_{\beta,k}(Z)\;\sqrt{\frac12\int_{\R^d}
		\bm{v}_\beta(x) \left|\left( \Phi_r* \widetilde{\Phi_r}
		\right)(x)\right|\d x}\:,
	\end{align*}
	thanks to Lemma \ref{lem:Young}.
	Young's inequality implies that
	\[
		\left|\left( \Phi_r*\widetilde{\Phi_r}\right)(x)\right|
		\le \| h \|_{L^2(\mathbb{B}_r^c)}^2
		\qquad\text{for every $x\in\R^d$},
	\]
	whence follows the result since $\beta\bm{v}_\beta$ is a probability density
	function on $\R^d$.
\end{proof}

\section{Well posedness}\label{sec:wellposedness}

Before we study the spatial ergodicity of the solution to \eqref{SHE} we address matters
of well posedness. As was mentioned earlier, well-posedness follows from the more general
theory of Dalang \cite{Dalang1999} when $h\ge0$, for example.
Here we say a few things about general
well posedness when $h$ is signed. This undertaking does require some new ideas, but most
of those new ideas have already been developed in the earlier sections, particularly as regards
the space $\cup_{p>1}\mathcal{G}_p(\R^d)$, which now plays a prominent role.

Recall that $\lambda$-potential $\bm{v}_\lambda$ from \eqref{HRD}.
Choose and fix a function $h\in\cup_{p>1}\mathcal{G}_p(\R^d)$
and recall from Lemma \ref{lem:Dalang:Lp}
that
\[
	\int_{\R^d}\bm{v}_\lambda(x)|f(x)|\,\d x\le
	\int_{\R^d}\bm{v}_\lambda(x)\left( |h|*|\tilde{h}|\right)(x)\,\d x<\infty,
\]
for one, hence all,
$\lambda>0$. As a consequence, we find that the following is a well-defined,
$(0\,,\infty)$-valued function on $(0\,,\infty)$:
\begin{equation}\label{Lambda}
	\Lambda_h(\delta) := \inf\left\{ \lambda>0:\ \int_{\R^d} \bm{v}_\lambda(x)
	\left( |h|*|\tilde{h}|\right)(x)\,\d x<
	\delta\right\}\qquad\text{for all $\delta>0$},
\end{equation}
where $\inf\varnothing:=\infty$.

\begin{theorem}\label{th:exist}
	Suppose $h\in\mathcal{G}_p(\R^d)$ for some $p>1$.
	Then, the SPDE \eqref{SHE}, subject to non-random initial data $u(0)=u_0\in L^\infty(\R^d)$
	and non degeneracy condition $\text{\rm Lip}(\sigma)>0$,
	has a mild solution $u$ which is unique (upto a modification)
	subject to the additional condition that
	\[
		\sup_{x\in\R^d}\E\left(|u(t\,,x)|^k\right) \le
		\left[ \frac{\|u_0\|_{L^\infty(\R^d)}}{\varepsilon} +
		\frac{|\sigma(0)|}{\varepsilon\text{\rm Lip}(\sigma)}\right]^k
		\exp\left\{ kt\Lambda_h\left(
		\frac{2(1-\varepsilon)^2}{[z_k\text{\rm Lip}(\sigma)]^2}
		\right)\right\}
		\qquad\text{for all $t>0$},
	\]
	valid for every $\varepsilon\in(0\,,1)$ and $k\ge2$; see also \eqref{Lambda}.
	Finally, $(t\,,x)\mapsto u(t\,,x)$ is continuous in $L^k(\P)$ for very $k\ge2$,
	and hence Lebesgue measurable (upto evanescance).
\end{theorem}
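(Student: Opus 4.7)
The plan is a standard Picard iteration executed in the Banach space $(\mathbb{W}_{\beta,k}\,,\mathcal{N}_{\beta,k})$ for a well-chosen Lyapunov exponent $\beta>0$, leveraging the stochastic Young inequality of Lemma \ref{lem:Young}. Define the operator
\[
    \mathcal{A}(\Phi)(t\,,x):=(\bm{p}_t*u_0)(x) + \left(\bm{p}\circledast \sigma(\Phi)\eta^{(h)}\right)(t\,,x),
\]
so that mild solutions of \eqref{SHE} are exactly the fixed points of $\mathcal{A}$. The Lipschitz bound $|\sigma(z)|\le|\sigma(0)|+\text{\rm Lip}(\sigma)|z|$ and the fact that $\mathcal{N}_{\beta,k}(\bm{p}*u_0)\le\|u_0\|_{L^\infty(\R^d)}$, combined with Lemma \ref{lem:Young}, yield the two core estimates
\[
    \mathcal{N}_{\beta,k}(\mathcal{A}(\Phi))\le \|u_0\|_{L^\infty(\R^d)} + z_k\sqrt{\tfrac12\textstyle\int_{\R^d}\bm{v}_\beta|f|\,\d x}\,\left(|\sigma(0)|+\text{\rm Lip}(\sigma)\mathcal{N}_{\beta,k}(\Phi)\right),
\]
\[
    \mathcal{N}_{\beta,k}(\mathcal{A}(\Phi_1)-\mathcal{A}(\Phi_2))\le z_k\text{\rm Lip}(\sigma)\sqrt{\tfrac12\textstyle\int_{\R^d}\bm{v}_\beta|f|\,\d x}\cdot \mathcal{N}_{\beta,k}(\Phi_1-\Phi_2).
\]
By Lemma \ref{lem:Dalang:Lp} the integral $\int\bm{v}_\beta|f|\,\d x$ is finite for every $\beta>0$ and, by monotone convergence, vanishes as $\beta\uparrow\infty$. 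Hence, for any fixed $\varepsilon\in(0\,,1)$ and $k\ge 2$, choosing any $\beta>\Lambda_h(2(1-\varepsilon)^2/[z_k\text{\rm Lip}(\sigma)]^2)$ forces the contraction constant $\rho:=z_k\text{\rm Lip}(\sigma)\sqrt{\tfrac12\int\bm{v}_\beta|f|\,\d x}$ to satisfy $\rho\le 1-\varepsilon$.

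With $\rho<1$, $\mathcal{A}$ is a strict contraction on $\mathbb{W}_{\beta,k}$, and Banach's fixed-point theorem produces a unique $u\in\mathbb{W}_{\beta,k}$ with $\mathcal{A}(u)=u$. Iterating the first inequality at the fixed point gives
\[
    \mathcal{N}_{\beta,k}(u)\le \frac{\|u_0\|_{L^\infty(\R^d)}}{1-\rho}+\frac{\rho\,|\sigma(0)|}{(1-\rho)\text{\rm Lip}(\sigma)}\le \frac{\|u_0\|_{L^\infty(\R^d)}}{\varepsilon}+\frac{|\sigma(0)|}{\varepsilon\text{\rm Lip}(\sigma)},
\]
and then the $k$-th power of the bound $\sup_{x}\|u(t\,,x)\|_k\le \e^{\beta t}\mathcal{N}_{\beta,k}(u)$, together with the freedom to take $\beta$ arbitrarily close to $\Lambda_h(2(1-\varepsilon)^2/[z_k\text{\rm Lip}(\sigma)]^2)$, yields the asserted moment inequality. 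Uniqueness follows in the same framework: any other mild solution $u'$ satisfying the stated moment bound belongs to $\mathbb{W}_{\beta,k}$ for the same $\beta$, and applying the second core inequality to $u$ and $u'$ gives $\mathcal{N}_{\beta,k}(u-u')\le(1-\varepsilon)\mathcal{N}_{\beta,k}(u-u')$, forcing $u'=u$ as a modification.

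It remains to establish the $L^k(\P)$-continuity of $(t\,,x)\mapsto u(t\,,x)$; Lebesgue measurability then follows from a standard argument (cf. Doob's theorem, as alluded to in Remark \ref{rem:doob}). Write $u(t\,,x)=(\bm{p}_t*u_0)(x)+I(t\,,x)$ where $I$ is the stochastic convolution against $\sigma(u)\eta^{(h)}$. The deterministic piece is continuous because $u_0\in L^\infty(\R^d)$ and $\bm{p}$ is a smooth approximation of the identity. For $I$, one decomposes $I(t',x')-I(t\,,x)$ into a difference of integrands over the common time interval $(0\,,t\wedge t')$ plus a ``tail'' stochastic integral over the small interval between $t\wedge t'$ and $t\vee t'$. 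Applying the Burkholder--Davis--Gundy inequality exactly as in the proof of Lemma \ref{lem:Young}, one is reduced to showing that
\[
    \int_0^{t\wedge t'}\!\!\d s\!\int_{\R^d}\!\d y\!\int_{\R^d}\!\d z\;|\bm{p}_{t-s}(x-y)-\bm{p}_{t'-s}(x'-y)|\,|\bm{p}_{t-s}(x-z)-\bm{p}_{t'-s}(x'-z)|\,|f(y-z)|
\]
and the corresponding tail piece vanish as $(t'\!,x')\to(t\,,x)$. Using $L^1(\R^d)$-continuity of the heat semigroup and that $\int\bm{v}_\beta|f|\,\d x<\infty$, dominated convergence closes the argument.

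The main obstacle is carrying out the continuity step with enough care to make the dominated-convergence argument rigorous: one must produce an integrable majorant uniformly in $(t',x')$, which requires combining the Chapman--Kolmogorov semigroup property of $\bm{p}$ with the finiteness of $\int\bm{v}_\beta|f|\,\d x$ guaranteed by Lemma \ref{lem:Dalang:Lp}. The existence, uniqueness, and moment bound are comparatively routine consequences of the contraction estimates once Lemma \ref{lem:Young} is in place.
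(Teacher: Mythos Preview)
Your proposal is correct and follows essentially the same approach as the paper: a Picard/contraction argument in $(\mathbb{W}_{\beta,k},\mathcal{N}_{\beta,k})$ driven by the stochastic Young inequality (Lemma \ref{lem:Young}), with $\beta$ chosen via $\Lambda_h$ to make the contraction constant at most $1-\varepsilon$. The paper phrases things in terms of explicit Picard iterates rather than invoking Banach's fixed-point theorem, and it abbreviates the $L^k(\P)$-continuity step that you spell out, but the substance is the same.
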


\begin{remark}
	A ready by-product of Theorem \ref{th:exist} is that the $k$th moment
	Lyapunov exponent $\overline\lambda(k)$ of $u$ exists and satisfies
	\[
		\overline\lambda(k) := \limsup_{t\to\infty}\frac1t\sup_{x\in\R^d}
		\log\E\left(|u(t\,,x)|^k\right)
		\le k\Lambda_h\left( \frac{2}{[z_k
		\text{\rm Lip}(\sigma)]^2}\right) \begin{cases}
			= 2\Lambda_h\left( \frac{2}{[\text{\rm Lip}(\sigma)]^2}\right)
				&\text{if $k=2$};\\
			\le k\Lambda_h\left( \frac{1}{2k[
				\text{\rm Lip}(\sigma)]^2}\right)
				&\text{if $k>2$}.
		\end{cases}
	\]
	See \eqref{z_k}. More information on this topic can be found in \cite{KhCBMS}.
\end{remark}

\begin{remark}\label{rem:doob}
	Because of $L^k(\P)$-continuity, Doob's theory of separability becomes applicable (see
	Doob \cite{Doob})
	and implies, among other things, that $x\mapsto u(t\,,x)$ is Lebesgue measurable.
	This is of course directly relevant to the present discussion of spatial ergodicity.
\end{remark}

\begin{proof}[Outline of the proof of Theorem \ref{th:exist}]
	The proof follows a standard route. We therefore outline it,
	in part to document the verasity of the agrument, but
	mainly as a means of introducing objects that we will need later on.
	
	Let $u_0(t\,,x):=u_0(x)$ for all $t\ge0$ and $x\in\R^d$, and define
	iteratively
	\begin{align*}
		u_{n+1}(t\,,x) :=&\ \int_{\R^d}\bm{p}_t(y-x)u_0(y)\,\d y +
			\int_{(0,t)\times\R^d}\bm{p}_{t-s}(x-y)\sigma(u_n(s\,,y))
			\,\eta^{( h )}(\d s\,\d y)\\
		=&\ (\bm{p}_t*u_0)(t) + \left( \bm{p}\circledast
			\sigma(u_n)\eta^{( h )}\right)(t\,,x),
	\end{align*}
	for every integer $n\ge0$ and all real numbers $t\ge0$ and $x\in\R^d$.
	Since the first term is bounded uniformly by $\|u_0\|_{L^\infty(\R^d)}$,
	and since every $\mathcal{N}_{\beta,k}$ is a norm for every $\beta>0$
	and $k\ge1$,
	it follows that for all integers $n\ge0$, and all reals $\beta>0$ and $k\ge2$,
	\begin{equation}\label{N(u)}\begin{split}
		\mathcal{N}_{\beta,k}(u_{n+1}) &\le \|u_0\|_{L^\infty(\R^d)} +
			\mathcal{N}_{\beta,k}\left( \bm{p}\circledast
			\sigma(u_n)\eta^{( h )}\right)\\
		&\le\|u_0\|_{L^\infty(\R^d)} + z_k
			\mathcal{N}_{\beta,k}\left( \sigma(u_n)\right)\sqrt{\frac12\int_{\R^d}
			\bm{v}_\beta(x) |f(x)|\,\d x};
	\end{split}\end{equation}
	see Lemma \ref{lem:Young}.
	Because $|\sigma(z)|\le|\sigma(0)|+\text{\rm Lip}(\sigma)|z|$
	for all $z\in\R$, it follows that
	\[
		\mathcal{N}_{\beta,k}(u_{n+1}) \le\|u_0\|_{L^\infty(\R^d)} + z_k
		\left(|\sigma(0)|+\text{\rm Lip}(\sigma)\mathcal{N}_{\beta,k}
		( u_n)\right)
		\sqrt{\frac12\int_{\R^d}
		\bm{v}_\beta(x) \left( |h|*|\tilde{h}|\right)(x)\,\d x}.
	\]
	This is valid for every $\beta>0$ and $k\ge2$. Because
	\begin{equation}\label{E:Lambda-Beta}
		 \beta\ge \Lambda_h
		 \left(\frac{2(1-\varepsilon)^2}{[z_k\text{\rm Lip}(\sigma)]^2}\right)
		 \quad\text{iff}\quad
		 \int_{\R^d}\bm{v}_\beta(x) \left( |h|*|\tilde{h}|\right)(x)\,\d x
		 \le \frac{2(1-\varepsilon)^2}{[z_k\text{\rm Lip}(\sigma)]^2},
	\end{equation}
	it follows that
	\begin{equation}\label{eq:N(u_n+1)}\begin{split}
		\mathcal{N}_{\beta,k}(u_{n+1}) &\le\|u_0\|_{L^\infty(\R^d)} + z_k
			|\sigma(0)|\sqrt{\frac12\int_{\R^d}
			\bm{v}_\beta(x) \left( |h|*|\tilde{h}|\right)\,\d x} + (1-\varepsilon)
			\mathcal{N}_{\beta,k}(u_n)\\
		&\le \|u_0\|_{L^\infty(\R^d)} +
			\frac{|\sigma(0)|}{\text{\rm Lip}(\sigma)} +
			(1-\varepsilon)\mathcal{N}_{\beta,k}(u_n)\\
		&\le \|u_0\|_{L^\infty(\R^d)} +
			\frac{|\sigma(0)|}{\text{\rm Lip}(\sigma)} + (1-\varepsilon)\left[
			\|u_0\|_{L^\infty(\R^d)} +
			\frac{|\sigma(0)|}{\text{\rm Lip}(\sigma)}\right] + (1-\varepsilon)^2
			\mathcal{N}_{\beta,k}(u_{n-1})\\
		&\le\cdots\le \left[\|u_0\|_{L^\infty(\R^d)} +
			\frac{|\sigma(0)|}{\text{\rm Lip}(\sigma)}\right]\cdot
			\left[\sum_{j=0}^n(1-\varepsilon)^j +
			(1-\varepsilon)^{n+1}\|u_0\|_{L^\infty(\R^d)}\right]\\
		&\le \left[ \|u_0\|_{L^\infty(\R^d)} +
			\frac{|\sigma(0)|}{\text{\rm Lip}(\sigma)}\right]\cdot
			\left[\frac1\varepsilon +
			(1-\varepsilon)^{n+1}\|u_0\|_{L^\infty(\R^d)}
			\right],
	\end{split}\end{equation}
	after iteration. Similarly, one finds that
	\begin{equation}\label{u_n-u}\begin{split}
		\mathcal{N}_{\beta,k}(u_{n+1} - u_n) &\le
			\mathcal{N}_{\beta,k}\left( \bm{p}\circledast
			\left[ \sigma(u_n)-\sigma(u_{n-1})\right]\eta^{( h )}\right)\\
		&\le z_k\mathcal{N}_{\beta,k}\left( \sigma(u_n)-\sigma(u_{n-1})\right)
			\sqrt{\frac12\int_{\R^d}
			\bm{v}_\beta(x) \left( |h|*|\tilde{h}|\right)\,\d x}\\
		&\le z_k\text{\rm Lip}(\sigma)\mathcal{N}_{\beta,k}
			\left( u_n-u_{n-1}\right)\sqrt{\frac12\int_{\R^d}
			\bm{v}_\beta(x) \left( |h|*|\tilde{h}|\right)\,\d x}\\
		&\le(1-\varepsilon)\mathcal{N}_{\beta,k}\left( u_n-u_{n-1}\right),
	\end{split}\end{equation}
	provided still that
	$\beta \ge \Lambda_h(2(1-\varepsilon)^2/[z_k\text{\rm Lip}(\sigma)]^2)$.
	It follows immediately that $\{u_n\}_{n\ge0}$ is a Cauchy sequence in $\mathbb{W}_{\beta, k}$
	when $\beta\ge\Lambda_h(2(1-\varepsilon)^2/[z_k\text{\rm Lip}(\sigma)]^2)$. It also implies readily
	that  $u:=\lim_{n\to\infty}u_n$ is an element of $\mathbb{W}_{\beta, k}$, for the same range
	of $\beta$'s, and that $u$ solves \eqref{SHE}. This and Fatou's lemma together prove the asserted
	upper bound for $\E(|u(t\,,x)|^k)$ as well.
	
	The proof of uniqueness is also essentially standard: Suppose there existed
	$u,v\in\mathbb{W}_{\beta, k}$ for some
	$\beta \ge \Lambda_h(2(1-\varepsilon)^2/[z_k\text{\rm Lip}(\sigma)]^2)$
	both of which are mild solutions to \eqref{SHE}. Then, the same argument that led to
	\eqref{u_n-u} yields
	$\mathcal{N}_{\beta,k,T}(u-v)\le(1-\varepsilon)\mathcal{N}_{\beta,k,T}(u-v)$
	for all $\beta \ge \Lambda_h(2(1-\varepsilon)^2/[z_k\text{\rm Lip}(\sigma)]^2)$
	and $T>0$, where
	\[
		\mathcal{N}_{\beta,k,T}(\Phi) := \sup_{t\in[0,T]}\sup_{x\in\R^d}
		\e^{-\beta t}\|\Phi(t\,,x)\|_k;
	\]
	compare with \eqref{N}.
	In particular, it follows that there exists $\beta>0$ such that
	\[
		\mathcal{N}_{\beta,k,T}(u-v)=0 \qquad\text{for all $T>0$},
	\]
	and hence $u$ and $v$ are modifications of one another. We can unscramble the latter
	displayed statement in order to see that this yields the asserted bound for
	$\E(|u(t\,x)|^k)$. Similarly, one proves $L^k(\P)$ continuity, which completes our (somewhat
	abbreviated) proof of Theorem \ref{th:exist}.
\end{proof}

\section{Proof of stationarity}\label{sec:stat}
For every $\varphi\in C(\R_+\times\R^d)$ and $y\in\R^d$ define shift operators
$\{\theta_y\}_{y\in\R^d}$ as follows:
\[
	(\varphi\circ \theta_y)(t\,,x) = \varphi(t\,,x+y).
\]
Clearly, $\theta:=\{\theta_y\}_{y\in\R^d}$ is a group under composition. The
following is used tacitly in the literature many times without explicit proof of even mention
(see for example \cite{CJK2013}). It also improves the assertion, found observed by Dalang \cite{Dalang1999} that the 2-point correlation function of $x\mapsto u(t\,,x)$
is invariant under $\theta$. When $\sigma(z)\propto z$
the latter moment invariance (and more) can be deduced directly
from an explicit Feynman--Kac type moment formula;
see for example Chen, Hu, and Nualart \cite{LCHN17}.

\begin{lemma}[Spatial Stationarity]\label{lem:stat}
	Suppose $h\in\cup_{p>1}\mathcal{G}_p(\R^d)$, so that \eqref{SHE}
	has a unique random-field solution $u$ (Theorem \ref{th:exist}).
	Then, the random field $u\circ\theta_y$ has the same finite-dimensional
	distributions as $u$ for every $y\in\R^d$.
	In particular, for every $t\ge0$, the finite-dimensional
	distributions of $\{u(t\,,x+y)\}_{x\in\R^d}$ do not depend on $y\in\R^d$.
\end{lemma}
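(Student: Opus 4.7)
The natural approach is Picard iteration combined with the intrinsic spatial stationarity of the driving noise. First I would observe that the underlying space-time white noise $\xi$ is spatially translation invariant: for any $y\in\R^d$ and any finite family of test functions $\psi_1,\ldots,\psi_m\in C_c(\R_+\times\R^d)$, translation invariance of Lebesgue measure implies that the Gaussian vector $(\xi(\psi_1\circ\theta_y),\ldots,\xi(\psi_m\circ\theta_y))$ has the same covariance as $(\xi(\psi_1),\ldots,\xi(\psi_m))$, and hence the same joint law. From \eqref{eta^h} a direct computation gives $\eta^{(h)}(\psi\circ\theta_y) = \xi((\psi\ast\tilde h)\circ\theta_y)$, so the colored noise $\eta^{(h)}$ inherits the same spatial stationarity: the random linear functional $\psi\mapsto\eta^{(h)}(\psi\circ\theta_y)$ has, as a process in $\psi$, the same law as $\eta^{(h)}$ itself.

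Next, with $u_0(t,x)\equiv 1$ and the Picard recursion as in the proof of Theorem \ref{th:exist}, I would argue by induction on $n$ that $(u_n\,,\eta^{(h)})\circ\theta_y$ has the same joint finite-dimensional distribution as $(u_n\,,\eta^{(h)})$. The base case is trivial because $u_0$ is constant. For the inductive step, the essential fact is the translation-equivariance of the Walsh convolution,
\[
	\bigl(\bm{p}\circledast Z\eta^{(h)}\bigr)(t\,,x+y)
	= \bigl(\bm{p}\circledast(Z\circ\theta_y)\,(\eta^{(h)}\circ\theta_y)\bigr)(t\,,x)
	\qquad\text{a.s.},
\]
which is immediate for simple predictable integrands $Z$ by a spatial change of variables in the defining Walsh sum, and then extends to all of $\mathbb{W}_{\beta,k}$ via the $\mathcal{N}_{\beta,k}$-estimate of Lemma \ref{lem:Young} (applied to both $\eta^{(h)}$ and the equidistributed $\eta^{(h)}\circ\theta_y$, which induce the same second-moment bound). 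Applying this with $Z=\sigma(u_n)$ together with the inductive hypothesis advances the induction.

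Finally, the Picard scheme converges in $\mathbb{W}_{\beta,k}$ for all $\beta$ sufficiently large (see \eqref{u_n-u}), and since $\mathcal{N}_{\beta,k}$ is manifestly shift-invariant we also have $\mathcal{N}_{\beta,k}(u_n\circ\theta_y-u\circ\theta_y)\to 0$. Consequently the finite-dimensional distributions of $u_n\circ\theta_y$ converge to those of $u\circ\theta_y$, and those of $u_n$ converge to those of $u$; passing to the limit in the identity in law established above yields $u\circ\theta_y\stackrel{\mathrm{d}}{=}u$, which gives both assertions of the lemma. I expect the main obstacle to be making the translation-equivariance of the Walsh integral fully rigorous, since both sides involve stochastic integrals with respect to different (though equidistributed) noises; the cleanest route is to verify the identity on a dense class of elementary integrands, where it is a matter of changing variables, and then extend by a density argument based on the uniform moment control afforded by Lemma \ref{lem:Young}.
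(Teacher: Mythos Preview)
Your proposal is correct and shares the same two core ingredients with the paper's proof: the spatial translation invariance of the noise $\eta^{(h)}$ and the translation-equivariance identity for Walsh integrals (the paper writes this as $\int\Psi(s,z-y)\,\eta(\d s\,\d z)=\int\Psi(s,z)\,\eta_y(\d s\,\d z)$ a.s., proved on simple integrands and extended by density, exactly as you suggest). Where you differ is in how you close the argument. You run the induction along the Picard scheme $u_n$, carrying the joint law of $(u_n,\eta^{(h)})$ through each step, and then pass to the limit in $\mathbb{W}_{\beta,k}$. The paper instead applies the equivariance identity once, directly to the mild formulation of $u$, to see that $u\circ\theta_y$ solves the same equation driven by the shifted noise $\eta_y$; then it invokes the uniqueness assertion of Theorem \ref{th:exist} (packaged as a solution map $\mathcal{S}$ with $u=\mathcal{S}(\eta)$) to conclude $u\circ\theta_y=\mathcal{S}(\eta_y)\stackrel{\mathrm d}{=}\mathcal{S}(\eta)=u$. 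The paper's route is shorter and avoids the limit argument, but your Picard approach is equally valid and makes the joint-law bookkeeping more explicit; it also sidesteps having to articulate precisely in what sense $\mathcal{S}$ is a measurable map on the noise.
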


\begin{proof}
	The fact that \eqref{SHE} has a strong solution is another way to state
	that the transformation $\xi\mapsto u$ defines canonically a
	``solution map'' $\mathcal{S}$ via
	$u=\mathcal{S}(\xi)$, where we recall $\xi$ denotes space-time
	white noise. Recall also that the generalized Gaussian random field $\eta$ can be identified with a
	densely-defined isonormal Gaussian
	process $C_c(\R_+\times\R^d)\ni \varphi\mapsto\eta(\varphi)$
	via Wiener integrals as follows:
	\[
		\eta(\varphi) = \int_{\R_+\times\R^d}\varphi\,\d\eta
		\qquad\text{for all $\varphi\in L^2(\R_+\times\R^d)$}.
	\]
	Since $C_c(\R_+\times\R)\ni \varphi\mapsto\eta(\varphi)\in L^2(\P)$
	is a continuous linear mapping, the preceding identifies $\eta$ completely provided only
	that we prescribe $\eta(\varphi)$ for every $\varphi\in C_c(\R_+\times\R)$.
	In this way, we can  define a Gaussian noise $\eta_y$---one for every $y\in\R^d$---via
	\begin{equation}\label{xi_h}
		\eta_y(\varphi) = \int_{\R_+\times\R^d} \varphi(t\,,x-y)\,\eta(\d t\,\d x)
		\qquad\text{for all $\varphi\in C_c(\R_+\times\R^d)$.}
	\end{equation}
	It is easy to check covariances in order to see that $\eta_y(\varphi)$
	and $\eta(\varphi)$ have the same law; therefore, the noises
	$\eta$ and $\eta_y$ have the same law for every $y\in\R^d$. Also,
	it follows from the construction of the Walsh/It\^o stochastic integral that
	for all $t\ge0$, $x,y\in\R^d$, and Walsh-integrable random fields $\Psi$,
	\begin{equation}\label{WI}
		\int_{(0,t)\times\R^d}\Psi(s\,,z-y)\eta(\d s\,\d z)
		= \int_{(0,t)\times\R^d}\Psi(s\,,z)\,\eta_y(\d s\,\d z)
		\qquad\text{a.s.}
	\end{equation}
	This can be proved by standard approximation arguments, using only the fact that
	\eqref{WI} holds by \eqref{xi_h} when $\Psi$ is a simple random field;
	see Walsh \cite[Chapter 2]{Walsh}.
	
	Finally, we may combine \eqref{SHE} and \eqref{WI} in order to see that
	for all $t\ge0$ and $x,y\in\R^d$,
	\begin{align*}
		u(t\,,x+y) &= 1 + \int_{(0,t)\times\R^d} p_{t-s}(x+y-z)
			\sigma(u(s\,,z-y+y))\,\eta(\d s\,\d z)\\
		&=  1 + \int_{(0,t)\times\R^d} p_{t-s}(x-z)
			\sigma(u(s\,,z+y))\,\eta_y(\d s\,\d z)
			\qquad\text{a.s.}
	\end{align*}
	This proves that $u\circ\theta_y = \mathcal{S}(\eta_y)$ a.s.\ for every $y\in\R^d$, where
	we recall $\mathcal{S}$ denotes the solution map in \eqref{SHE}.
	Because $u$ is continuous,
	the preceding is another way to state the
	first assertion of the result. The second assertion follows from the first for elementary
	reasons.
\end{proof}

Let us mention also the following simple fact.

\begin{lemma}\label{lem:Var:erg}
	A stationary process $Y:=\{Y(x)\}_{x\in\R^d}$ is ergodic provided that
	\begin{equation}\label{cond:var}
		\lim_{N\to\infty}
		\Var\left( \fint_{[0,N]^d} \prod_{j=1}^k g_j(Y(x+\zeta^j))\,\d x\right) =0,
	\end{equation}
	for all integers $k\ge1$, every $\zeta^1,\ldots,\zeta^k\in\R^d$,
	and all Lipschitz-continuous functions $g_1,\ldots,g_k:\R\to\R$ that satisfy
	\begin{equation}\label{eq:WLOG}
		g_j(0)=0\quad\text{and}\quad
		\text{\rm Lip}(g_j)=1,
	\end{equation}
	for every $j=1,\ldots,k$.
\end{lemma}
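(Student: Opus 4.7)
The plan is to deduce ergodicity from the multi-parameter $L^2$ mean ergodic theorem of von Neumann applied to the translation action on the law of $Y$, combined with a functional monotone class argument extending the conclusion from the cylindrical test functions of \eqref{cond:var} to all bounded Borel functionals on the path space.

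First I would realize $Y$ on the canonical path space $(\Omega_0,\mathcal F_0)$ equipped with the law $\mu=\P\circ Y^{-1}$, and introduce the translation action $\tau_y\omega(x):=\omega(x+y)$, which preserves $\mu$ by stationarity. Ergodicity of $Y$ is by definition the $\mu$-triviality of the $\tau$-invariant $\sigma$-algebra $\mathcal I\subset\mathcal F_0$. Because $\{[0,N]^d\}_{N\ge1}$ is a F\o lner sequence in $\R^d$, the $L^2$ mean ergodic theorem for $\R^d$-actions gives, for every $F\in L^2(\mu)$,
\[
A_N F:=\fint_{[0,N]^d} F\circ\tau_x\,\d x\xrightarrow[N\to\infty]{L^2(\mu)}\E_\mu[F\mid\mathcal I].
\]
As $\E_\mu[A_N F]=\E_\mu[F]$ by invariance, we obtain $\Var_\mu(A_N F)\to\Var_\mu(\E_\mu[F\mid\mathcal I])$, which vanishes if and only if $\E_\mu[F\mid\mathcal I]=\E_\mu[F]$ $\mu$-a.s.

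The hypothesis \eqref{cond:var} furnishes this vanishing for cylindrical $F(\omega)=\prod_{j=1}^k g_j(\omega(\zeta^j))$ with $g_j$'s normalized by \eqref{eq:WLOG}. Writing a generic bounded Lipschitz $g_j$ as $g_j(0)+\lip(g_j)\tilde g_j$ with $\tilde g_j$ of the normalized form, and bilinearly expanding $\prod_j g_j$ into $2^k$ cylindrical pieces, shows via the triangle inequality for standard deviation that the vanishing persists for all bounded Lipschitz $g_j$'s. Hence $\E_\mu[F\mid\mathcal I]=\E_\mu[F]$ $\mu$-a.s.\ for every $F$ in the multiplicative class
\[
\mathcal M:=\{1\}\cup\Big\{\prod_{j=1}^k g_j(\omega(\zeta^j)):k\ge1,\ \zeta^j\in\R^d,\ g_j\text{ bounded Lipschitz}\Big\}.
\]
Bounded Lipschitz functions on $\R$ separate points and generate $\mathcal B(\R)$, so $\mathcal M$ generates $\mathcal F_0$. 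The class $\mathcal H:=\{F\in L^\infty(\mu):\E_\mu[F\mid\mathcal I]=\E_\mu[F]\ \mu\text{-a.s.}\}$ is a vector space containing constants, closed under bounded monotone limits (via dominated convergence for conditional expectations), and contains the multiplicative family $\mathcal M$; the functional monotone class theorem then yields $\mathcal H\supseteq L^\infty(\mathcal F_0,\mu)$. Taking $F=\mathbf 1_A$ for an invariant set $A\in\mathcal I$ forces $\mathbf 1_A=\mu(A)$ $\mu$-a.s., so $\mu(A)\in\{0,1\}$, which is the desired ergodicity.

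The main obstacle is bookkeeping: the passage from the normalized $g_j$'s of \eqref{eq:WLOG} to general bounded Lipschitz $g_j$'s, and the ensuing functional monotone class extension from a generating multiplicative class to all bounded $\mathcal F_0$-measurable functionals. Both steps are routine once the ergodic-theoretic framework is in place; the substantive ingredient is the appeal to von Neumann's $L^2$ ergodic theorem for the measure-preserving $\R^d$-action $\tau$.
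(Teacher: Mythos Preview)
Your argument is correct and shares the two load-bearing steps with the paper: the appeal to von~Neumann's $L^2$ ergodic theorem for the $\R^d$-translation action to identify the $L^2$-limit of the averages with $\E[\,\cdot\mid\mathscr I\,]$, and the $2^k$ bilinear expansion $g_j=g_j(0)+\lip(g_j)\,\tilde g_j$ to pass from the normalized $g_j$'s of \eqref{eq:WLOG} to arbitrary bounded Lipschitz $g_j$'s. Where you diverge is in the endgame: the paper specializes the $g_j$'s to sines and cosines, so that \eqref{cond:var} yields the $L^2$-convergence of $\fint_{[0,N]^d}\exp\{i\sum_j z_j Y(x+\zeta^j)\}\,\d x$ to the unconditional characteristic function, then matches this against the von~Neumann limit and invokes Fourier inversion to conclude that every finite-dimensional marginal $(Y(\zeta^1),\ldots,Y(\zeta^k))$ is independent of $\mathscr I$, whence $\mathscr I$ is independent of itself. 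You instead close up by the functional monotone class theorem, using that products of bounded Lipschitz cylinder functions form a multiplicative family generating $\mathcal F_0$. Both closures are standard and equally short; your route avoids Fourier analysis and would extend verbatim to vector-valued $Y$, while the paper's route is slightly more concrete and sidesteps any discussion of path-space measurability beyond what is already assumed.
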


\begin{proof}
	Suppose $g_1,\ldots,g_k:\R\to\R$ are non-constant,
	Lipschitz-continuous functions, but do not necessarily satisfy
	\eqref{eq:WLOG}. We first verify that \eqref{cond:var} holds
	for these $g_i$'s as well. Indeed, define
	\[
		\widetilde{g}_j(w) := \frac{g_j(w)-g_j(0)}{\text{\rm Lip}(g_j)}
		\qquad\text{for all $j=1,\ldots,k$ and $w\in\R$},
	\]
	and observe that $\widetilde{g}_1,\ldots,\widetilde{g}_k:\R\to\R$
	satisfy \eqref{eq:WLOG}, and hence \eqref{cond:var} holds
	when we replace every $g_i$ with $\tilde{g}_i$. It is easy to see that
	\begin{equation}\label{VarVar}
		\fint_{[0,N]^d} \prod_{j=1}^k g_j(Y(x+\zeta^j))\,\d x
		= \sum_{E\subseteq\{1,\ldots,k\}}
		\prod_{l\in E} g_l(0) \fint_{[0,N]^d} \prod_{j\in\{1,\ldots,k\}\setminus E}
		\text{\rm Lip}(g_j)\,\widetilde{g}_j(Y(x+\zeta^j))\,\d x,
	\end{equation}
	where a product over the empty set is identically defined as $1$.
	For example, when $k=2$, we have
	\begin{align*}
		&\fint_{[0,N]^d} g_1(Y(x+\zeta^1))g_2(Y(x+\zeta^2))\,\d x\\
		&\hskip1in= \fint_{[0,N]^d} \left[\text{\rm Lip}(g_1)\,\widetilde{g}_1(Y(x+\zeta^1))
			+ g_1(0)\right]\left[\text{\rm Lip}(g_2)\,\widetilde{g}_2(Y(x+\zeta^2))
			+ g_2(0)\right]\d x,
	\end{align*}
	which yields \eqref{VarVar} upon expanding the product inside the integral.
	
	Minkowski's inequality ensures that, for all random variables $X_1,\ldots,X_M\in L^2(\P)$,
	\[
		\Var(X_1+\cdots+X_M) \le \left(\sum_{i=1}^M \sqrt{\Var(X_i)}\right)^2
		\le M^2 \max_{1\le i\le M}\Var(X_i).
	\]
	Thus, we see from \eqref{VarVar} that
	\begin{align*}
		&\Var\left(\fint_{[0,N]^d} \prod_{j=1}^k g_j(Y(x+\zeta^j))\,\d x\right)\\
		&\le4^k  \max_{E\subseteq\{1,\ldots,k\}}
			\prod_{l\in E} g_l^2(0)\cdot
			\Var\left(\fint_{[0,N]^d} \prod_{j\in\{1,\ldots,k\}\setminus E}
			\text{\rm Lip}(g_j)\,\widetilde{g}_j(Y(x+\zeta^j))\,\d x\right)\\
		&\to0\quad\text{as $N\to\infty$},
	\end{align*}
	thanks to \eqref{cond:var}. This proves the assertion that if \eqref{cond:var}
	holds when $g_i$'s are Lipschitz and satisfy \eqref{eq:WLOG}, then
	\eqref{cond:var} continues to hold for non-constant, Lipschitz-continuous $g_i$'s, even when
	they do not satisfy \eqref{eq:WLOG}. And it is easy to see that ``non-constant'' can
	be removed from the latter assertion without changing its truth: We merely factor out
	of the variance the constant $g_i$'s, and relabel the remaining $g_j$'s, thus reducing the problem
	to the non-constant case.
	
	We now apply the preceding with $g_i$'s replaced with sines and cosines, in order
	to deduce from Chebyshev's inequality and stationarity that
	\[
		\lim_{N\to\infty}
		\fint_{[0,N]^d}\exp\left\{i\sum_{j=1}^k z_jY(x+\zeta^j)\right\}\d x
		=\E\left[\exp\left\{i\sum_{j=1}^kz_jY(\zeta^j)\right\}
		\right]\qquad\text{in $L^2(\P)$},
	\]
	for all $z_1,\ldots,z_k\in\R$ and $\zeta^1,\ldots,\zeta^k\in\R^d$.
	On the other hand, von-Neumann's (simpler) form of the 
	ergodic theorem \cite{Peterson} tells us that
	\[
		\lim_{N\to\infty}
		\fint_{[0,N]^d}\exp\left\{i\sum_{j=1}^k z_jY(x+\zeta^j)\right\}\d x
		= \E\left[ \left. \exp \left\{i\sum_{j=1}^k z_jY(\zeta^j)\right\}
		\ \right|\, \mathscr{I}\right]
		\qquad\text{in $L^2(\P)$,}
	\]
	where $\mathscr{I}$ denotes the invariant $\sigma$-algebra
	of $Y$. Equate the preceding two displays, and apply the inversion theorem of
	Fourier transforms, in order to see that
	every random vector of the form
	$(Y(\zeta^1)\,,\ldots,Y(\zeta^k))$ is independent of $\mathscr{I}$.
	This implies that $\mathscr{I}$ is independent of the $\sigma$-algebra generated
	by $Y$, and in particular $\mathscr{I}$ is independent of itself. This in turn
	proves the result.
\end{proof}

\section{Strong localization}\label{sec:localization}

In this section we refine a localization construction of Conus et al
\cite{CJKS2013} that works for a large
class of spatial correlation functions $f$ of the form \eqref{f=h*h}.

\begin{lemma}\label{lem:u^m,h}
	Choose and fix a real number $m>0$ and a function
	$h\in\mathcal{G}_p(\R^d)$ for some $p>1$.
	Then, the following stochastic integral equation has a predictable random-field
	solution $u^{(m,h)}$:
	\begin{equation}\label{eq:u^m,h}
		u^{(m,h)}(t\,,x) = 1 + \int_{(0,t)\times\mathbb{B}_{m\sqrt t}(x)}
		\bm{p}_{t-s}(x-y)\sigma\left( u^{(m,h)}(s\,,y)\right)\,\eta^{( h )}(\d s\,\d y),
	\end{equation}
	where $\eta^{(h)}$ was defined in \eqref{eta^h}.
	Moreover, $u^{(m,h)}$  is the only such solution that satisfies
	\begin{equation}\label{N:beta(u^m,h)}
		\mathcal{N}_{\beta,k}\left( u^{(m,h)}\right)<\infty\quad
		\text{whenever}\quad\text{$\beta \ge \Lambda_h\left(\frac{2(1-\varepsilon)^2}{%
		[z_k\text{\rm Lip}(\sigma)]^2}\right)$\quad for some $\varepsilon\in(0\,,1)$,}
	\end{equation}
	valid for every $k\ge2$.
\end{lemma}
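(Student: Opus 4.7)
The strategy is to run the Picard scheme of Theorem \ref{th:exist} verbatim, but with the stochastic convolution replaced by its $\mathbb{B}_{m\sqrt{t}}(x)$-truncated version. Concretely, set $u_0^{(m,h)}\equiv 1$ and, for each integer $n\ge 0$, define
\[
    u_{n+1}^{(m,h)}(t\,,x) := 1 + \int_{(0,t)\times\mathbb{B}_{m\sqrt t}(x)}
    \bm{p}_{t-s}(x-y)\,\sigma\!\left(u_n^{(m,h)}(s\,,y)\right)\eta^{(h)}(\d s\,\d y).
\]
Equivalently, the integrand is $\bm{p}_{t-s}(x-y)\bm 1_{\mathbb{B}_{m\sqrt t}(x)}(y)\sigma(u_n^{(m,h)}(s\,,y))$, a predictable random field for each fixed $(t\,,x)$. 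The key point is that the indicator does not spoil the Young-type estimate of Lemma \ref{lem:Young}.

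\paragraph{A truncated Young inequality.} Re-examining the proof of Lemma \ref{lem:Young}, one sees that Burkholder--Davis--Gundy followed by Cauchy--Schwarz shows that, for \emph{any} Borel set $A\subset\R^d$ and any $Z\in\mathbb{W}_{\beta,k}$,
\[
    \left\|\int_{(0,t)\times A}\bm{p}_{t-s}(x-y)Z(s\,,y)\,\eta^{(h)}(\d s\,\d y)\right\|_k
    \le z_k\,\mathcal{N}_{\beta,k}(Z)\,\e^{\beta t}\sqrt{\tfrac{1}{2}\int_{\R^d}\bm{v}_\beta(w)|f(w)|\,\d w},
\]
because restricting to $A$ only reduces the magnitude of the integrand in the resulting deterministic bound. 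Applied with $A=\mathbb{B}_{m\sqrt t}(x)$ (which depends on $(t\,,x)$ but not on $\omega$), this gives, for the iterates above,
\[
    \mathcal{N}_{\beta,k}\!\left(u_{n+1}^{(m,h)}\right)
    \le 1 + z_k\Bigl(|\sigma(0)| + \lip(\sigma)\mathcal{N}_{\beta,k}(u_n^{(m,h)})\Bigr)
    \sqrt{\tfrac{1}{2}\int_{\R^d}\bm{v}_\beta(w)\bigl(|h|*|\tilde h|\bigr)(w)\,\d w},
\]
which is precisely the recursion of \eqref{N(u)}. For $\beta\ge\Lambda_h\!\bigl(2(1-\varepsilon)^2/[z_k\lip(\sigma)]^2\bigr)$, the multiplier on $\mathcal{N}_{\beta,k}(u_n^{(m,h)})$ is at most $1-\varepsilon$, so the iteration of \eqref{eq:N(u_n+1)} yields $\sup_n\mathcal{N}_{\beta,k}(u_n^{(m,h)})<\infty$.

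\paragraph{Contraction and passage to the limit.} The same truncated Young inequality, applied this time to $Z=\sigma(u_n^{(m,h)})-\sigma(u_{n-1}^{(m,h)})$ and combined with $|\sigma(a)-\sigma(b)|\le\lip(\sigma)|a-b|$, gives the contraction
\[
    \mathcal{N}_{\beta,k}\!\left(u_{n+1}^{(m,h)}-u_n^{(m,h)}\right)
    \le (1-\varepsilon)\,\mathcal{N}_{\beta,k}\!\left(u_n^{(m,h)}-u_{n-1}^{(m,h)}\right),
\]
for every $\beta$ in the prescribed range, exactly as in \eqref{u_n-u}. Therefore $\{u_n^{(m,h)}\}$ is Cauchy in the Banach space $(\mathbb{W}_{\beta,k}\,,\mathcal{N}_{\beta,k})$, its limit $u^{(m,h)}$ lies in the same space, and continuity of the truncated stochastic convolution in $\mathcal{N}_{\beta,k}$ shows that $u^{(m,h)}$ solves \eqref{eq:u^m,h}. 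Condition \eqref{N:beta(u^m,h)} follows from the uniform bound obtained above (via Fatou).

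\paragraph{Uniqueness and main obstacle.} If $u,v$ both solve \eqref{eq:u^m,h} and satisfy \eqref{N:beta(u^m,h)} for the same $\beta$, then subtracting the two integral equations and applying the same truncated Young estimate yields $\mathcal{N}_{\beta,k,T}(u-v)\le(1-\varepsilon)\mathcal{N}_{\beta,k,T}(u-v)$ for every $T>0$ (with $\mathcal{N}_{\beta,k,T}$ as introduced at the end of the proof of Theorem \ref{th:exist}), forcing $u$ and $v$ to be modifications of one another. The only genuinely new technical point is the truncated Young inequality with the time-dependent set $\mathbb{B}_{m\sqrt t}(x)$; that is the main obstacle but, as noted, it reduces to observing that inserting an indicator in the integrand cannot enlarge the variance, so the bound of Lemma \ref{lem:Young} carries over without modification.
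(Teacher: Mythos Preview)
Your proposal is correct and follows essentially the same approach as the paper: define the Picard iterates with the $\mathbb{B}_{m\sqrt t}(x)$-truncated integral, observe that the indicator only shrinks the integrand so the stochastic Young inequality of Lemma~\ref{lem:Young} still applies, obtain the same recursion and contraction as in the proof of Theorem~\ref{th:exist}, and conclude existence and uniqueness by the identical Banach-space argument. The paper's proof is terser (it simply says ``repeat the last portions of the proof of Theorem~\ref{th:exist}''), while you spell out the truncated Young inequality and the uniqueness step explicitly, but the substance is the same.
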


\begin{proof}
	For every $n\in\mathbb{N}$, $t>0$, and $x\in\R^d$ define
	\[
		u_{n+1}^{(m, h )}(t\,,x) = 1 + \int_{(0,t)\times\mathbb{B}_{m\sqrt t}(x)}
		\bm{p}_{t-s}(x-y)\sigma\left( u_n^{(m, h )}(s\,,y)\right)\,\eta^{( h )}(\d s\,\d y),
	\]
	where  $u^{(m,h)}_0 \equiv1.$
	We can now repeat the last portions
	of the proof of Theorem \ref{th:exist} (see, in particular, \eqref{N(u)})
	in order to see that for every $k\ge2$,
	\begin{equation}\label{N(u^m,h)}\begin{split}
		\mathcal{N}_{\beta,k}\left(u_{n+1}^{(m, h )}\right) &\le 1 + z_k
			\mathcal{N}_{\beta,k}\left( \sigma(u_n^{(m,h)})\right)\sqrt{\frac12\int_{\R^d}
			\bm{v}_\beta(x) |f(x)|\,\d x}\\
		&\le 1+ z_k\left\{ |\sigma(0)| + \text{\rm Lip}(\sigma)\mathcal{N}_{\beta,k}
			\left( u_n^{(m,h)}\right)\right\}\sqrt{\frac12\int_{\R^d}
			\bm{v}_\beta(x) |f(x)|\,\d x},
	\end{split}\end{equation}
	and
	\begin{equation}\label{u^m,h:Picard}
		\mathcal{N}_{\beta,k}\left( u_{n+1}^{(m,h)} - u_n^{(m,h)}\right)
		\le z_k\text{\rm Lip}(\sigma)\mathcal{N}_{\beta,k}\left(
		u_n^{(m,h)}-u_{n-1}^{(m,h)}\right)\sqrt{\frac12\int_{\R^d}
		\bm{v}_\beta(x) \left(|h|*|\tilde{h}|\right)(x)\,\d x};
	\end{equation}
	see \eqref{u_n-u}. Since the final integral converges
	(Lemma \ref{lem:Dalang:Lp}), the rest of the proof
	follows by adapting the reasoning behind Theorem \ref{th:exist}
	to the present setting as well.
\end{proof}

\begin{lemma}\label{lem:u^m,h-u^m,h_n}
	Choose and fix a real number $m>0$ and a function
	$h\in\mathcal{G}_p(\R^d)$ for some $p>1$. Then,
	\[
		\mathcal{N}_{\beta,k}\left( u^{(m,h)} - u_n^{(m,h)}\right)\le
		\left[2 +  \frac{|\sigma(0)| + 1}{\text{\rm Lip}(\sigma)]}\right]\cdot
		\frac{(1-\varepsilon)^n}{\varepsilon}
		\qquad\text{for every $n\in\mathbb{Z}_+$},
	\]
	as long as $\beta\ge\Lambda_h(2(1-\varepsilon)^2/
	[z_k\text{\rm Lip}(\sigma)]^2)$
	for some $\varepsilon\in(0\,,1)$.
\end{lemma}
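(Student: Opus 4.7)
The plan is to exploit the Picard iteration scheme from the proof of Lemma \ref{lem:u^m,h} and show that the iterates converge to $u^{(m,h)}$ at geometric rate $(1-\varepsilon)$, then sum the geometric series via the triangle inequality.

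First, I will invoke the contraction estimate \eqref{u^m,h:Picard}. Combining it with the characterization of $\Lambda_h$ given in \eqref{E:Lambda-Beta}, the hypothesis $\beta \ge \Lambda_h(2(1-\varepsilon)^2/[z_k\text{\rm Lip}(\sigma)]^2)$ yields
\[
    z_k\,\text{\rm Lip}(\sigma)\sqrt{\frac12\int_{\R^d}\bm{v}_\beta(x)\bigl(|h|*|\tilde h|\bigr)(x)\,\d x}\le 1-\varepsilon,
\]
so \eqref{u^m,h:Picard} reduces to $\mathcal{N}_{\beta,k}(u_{n+1}^{(m,h)}-u_n^{(m,h)})\le(1-\varepsilon)\,\mathcal{N}_{\beta,k}(u_n^{(m,h)}-u_{n-1}^{(m,h)})$ for all $n\ge1$. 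Iterating this bound produces $\mathcal{N}_{\beta,k}(u_{n+1}^{(m,h)}-u_n^{(m,h)})\le(1-\varepsilon)^{n}\,\mathcal{N}_{\beta,k}(u_1^{(m,h)}-u_0^{(m,h)})$ for every $n\ge0$.

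Next, I will bound the base case $\mathcal{N}_{\beta,k}(u_1^{(m,h)}-u_0^{(m,h)})$. Since $u_0^{(m,h)}\equiv1$, the definition in the proof of Lemma \ref{lem:u^m,h} gives $u_1^{(m,h)}(t,x)-1 = \sigma(1)\int_{(0,t)\times\mathbb{B}_{m\sqrt t}(x)}\bm{p}_{t-s}(x-y)\,\eta^{(h)}(\d s\,\d y)$, which is dominated pointwise in $L^k(\P)$-norm by its unlocalized counterpart. Thus Lemma \ref{lem:Young}, together with $|\sigma(1)|\le|\sigma(0)|+\text{\rm Lip}(\sigma)$ and again \eqref{E:Lambda-Beta}, gives
\[
    \mathcal{N}_{\beta,k}(u_1^{(m,h)}-u_0^{(m,h)}) \le z_k\bigl(|\sigma(0)|+\text{\rm Lip}(\sigma)\bigr)\sqrt{\frac12\int_{\R^d}\bm{v}_\beta(x)\bigl(|h|*|\tilde h|\bigr)(x)\,\d x} \le (1-\varepsilon)\left(1+\frac{|\sigma(0)|}{\text{\rm Lip}(\sigma)}\right).
\]

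Finally, I will put these together. Since $u^{(m,h)}=\lim_{N\to\infty}u_N^{(m,h)}$ in $\mathbb{W}_{\beta,k}$, the triangle inequality for $\mathcal{N}_{\beta,k}$ and the geometric estimate from the first paragraph yield
\[
    \mathcal{N}_{\beta,k}(u^{(m,h)}-u_n^{(m,h)}) \le \sum_{j=n}^\infty\mathcal{N}_{\beta,k}(u_{j+1}^{(m,h)}-u_j^{(m,h)}) \le \left(1+\frac{|\sigma(0)|}{\text{\rm Lip}(\sigma)}\right)\frac{(1-\varepsilon)^{n+1}}{\varepsilon},
\]
which is stronger than the stated bound $\bigl[2+(|\sigma(0)|+1)/\text{\rm Lip}(\sigma)\bigr](1-\varepsilon)^n/\varepsilon$. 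There is no real obstacle here once the preceding iterative estimates are in place; the only subtlety to be careful about is that the domain of stochastic integration in \eqref{eq:u^m,h} is the ball $\mathbb{B}_{m\sqrt t}(x)$ rather than all of $\R^d$, but this only makes the stochastic integrals smaller in the $L^k(\P)$ sense, so the a priori bound from Lemma \ref{lem:Young} still applies.
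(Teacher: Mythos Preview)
Your proof is correct and follows essentially the same route as the paper: derive the contraction estimate from \eqref{u^m,h:Picard} and \eqref{E:Lambda-Beta}, iterate, bound the base case, and sum the tail of the geometric series. The paper bounds the base case more crudely via $\mathcal{N}_{\beta,k}(u_1^{(m,h)}-u_0^{(m,h)})\le\mathcal{N}_{\beta,k}(u_1^{(m,h)})+1$ and then \eqref{N(u^m,h)}, which is where the extra ``$2$'' in the stated constant comes from; your direct estimate of the stochastic integral yields the sharper bound you found.

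One wording issue: the sentences ``dominated pointwise in $L^k(\P)$-norm by its unlocalized counterpart'' and ``this only makes the stochastic integrals smaller in the $L^k(\P)$ sense'' are not correct as stated --- a Walsh integral over a subregion need not have smaller $L^k$-norm than the integral over the full region. What is true, and what actually makes your argument work, is that in the BDG/Young estimate the bound passes through the quantity $\mathcal{Q}_k(t,x)$ in the proof of Lemma \ref{lem:Young}, and restricting the $y,z$-integration to $\mathbb{B}_{m\sqrt t}(x)$ only decreases that integral. This is exactly what the paper uses implicitly in \eqref{N(u^m,h)}. So your conclusion stands, but the justification should be phrased in terms of the quadratic-variation bound rather than a monotonicity of the stochastic integral itself.
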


\begin{proof}
	In accord with \eqref{u^m,h:Picard}, the following holds for all $n\ge0$:
	\[
		\mathcal{N}_{\beta,k}\left( u_{n+1}^{(m,h)} - u_n^{(m,h)}\right)
		\le (1-\varepsilon)\mathcal{N}_{\beta,k}
		\left( u_n^{(m,h)} - u_{n-1}^{(m,h)}\right),
	\]
	whenever $\beta\ge\Lambda_h(2(1-\varepsilon)^2/
	[z_k\text{\rm Lip}(\sigma)]^2)$.
	Iterate to find that
	\begin{equation}\label{NNnn}
		\mathcal{N}_{\beta,k}\left( u_{n+1}^{(m,h)} - u_n^{(m,h)}\right)
		\le (1-\varepsilon)^n\mathcal{N}_{\beta,k}
		\left( u_1^{(m,h)} - u_0^{(m,h)}\right).
	\end{equation}
	Since $u_0^{(m,h)}\equiv1$, it follows readily from \eqref{N(u^m,h)} that
	\begin{align*}
		\mathcal{N}_{\beta,k}\left( u_1^{(m,h)} - u_0^{(m,h)}\right) &\le
			\mathcal{N}_{\beta,k}\left(u_1^{(m,h)}\right)+
			\mathcal{N}_{\beta,k}\left(u_0^{(m,h)}\right)\\
		&\le \mathcal{N}_{\beta,k}\left(u_1^{(m,h)}\right)+1\\
		&\le 2+ z_k\{ |\sigma(0)| + 1\}\sqrt{\frac12\int_{\R^d}
			\bm{v}_\beta(x) |f(x)|\,\d x}\\
		&\le 2 +  \frac{|\sigma(0)| + 1}{\text{\rm Lip}(\sigma)]}(1-\varepsilon)\\
		&\le 2 +  \frac{|\sigma(0)| + 1}{\text{\rm Lip}(\sigma)]},
	\end{align*}
	provided that $\beta\ge\Lambda_h(2(1-\varepsilon)^2/[z_k\text{\rm Lip}(\sigma)]^2)$;
	see also \eqref{eq:N(u_n+1)}.
	This, \eqref{NNnn}, and the defining property of $u^{(m,h)}$ together yield
	\[
		\mathcal{N}_{\beta,k}\left( u^{(m,h)} - u_n^{(m,h)}\right)
		\le \sum_{k=n}^\infty
		\mathcal{N}_{\beta,k}\left(
		u_{k+1}^{(m,h)}-u_k^{(m,h)}\right)
		\le \left[2 +  \frac{|\sigma(0)| + 1}{\text{\rm Lip}(\sigma)]}\right]
		\sum_{k=n}^\infty(1-\varepsilon)^k,
	\]
	and hence the lemma.
\end{proof}

\begin{lemma}\label{int_0^t}
	Choose and fix a a function
	$h\in\mathcal{G}_p(\R^d)$ for some $p>1$.
	Then, there exists a real number $c = c(d)>0$, independent of $f$, such that
	\[
		\int_0^{2t}\d s
		\int_{\R^d}\d w\ \bm{p}_s(w)|f(w)|
		\le c\int_0^{\sqrt t}\bar{f}(r)\omega_d(r)\,\d r,
	\]
	simultaneously for all $t\in(0\,,1)$.
\end{lemma}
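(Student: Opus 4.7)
The plan is to convert the spatial integral into a radial one via $|f(w)| \le (|h|*|\tilde h|)(w) \le \bar f(\|w\|)$ (immediate from \eqref{bar(f)}) and then split at $\|w\|=\sqrt t$. After passing to polar coordinates the left-hand side of the lemma is bounded by a dimensional constant times $\int_0^\infty \bar f(r)\,r^{d-1}I(r,t)\,\d r$, where $I(r,t):=\int_0^{2t}(2\pi s)^{-d/2}e^{-r^2/(2s)}\,\d s$.

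For the inner region $r\le\sqrt t$, using $t\le 1$ one has
\[
I(r,t) \le e^{2t}\int_0^\infty e^{-s}\bm{p}_s(w)\,\d s = e^{2t}\bm{v}_1(w) \le e^2\bm{v}_1(w),
\]
with $\|w\|=r$, and \eqref{pot:bound} gives $\bm{v}_1(w) \asymp \|w\|^{-(d-1)}\omega_d(\|w\|)$ on the unit ball. Since $r\le\sqrt t\le 1$, the Jacobian factor $r^{d-1}$ exactly cancels the singular factor $\|w\|^{-(d-1)}$, recovering the target integrand $\bar f(r)\omega_d(r)$ on $(0,\sqrt t)$.

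For the tail $r>\sqrt t$ I would use the substitution $u=r^2/(2s)$ to obtain the explicit closed form
\[
I(r,t) = \frac{r^{2-d}}{2\pi^{d/2}}\,\Gamma\!\left(\tfrac{d}{2}-1,\tfrac{r^2}{4t}\right),
\qquad \Gamma(a,x):=\int_x^\infty u^{a-1}e^{-u}\,\d u.
\]
Since $\bar f$ is non-increasing, $\bar f(r)\le\bar f(\sqrt t)$ for $r\ge\sqrt t$, and the change of variable $v=r^2/(4t)$ followed by Fubini yields
\[
\int_{\sqrt t}^\infty r\,\Gamma(d/2-1,r^2/(4t))\,\d r = 2t\int_{1/4}^\infty \Gamma(d/2-1,v)\,\d v \le 2t\,\Gamma(d/2).
\]
Hence the tail contributes at most a constant multiple of $t\bar f(\sqrt t)$, which I absorb into the inner piece via the elementary bound $\int_0^{\sqrt t}\omega_d(r)\,\d r \gtrsim t$, valid uniformly on $t\in(0,1)$---a one-line per-dimension check, the only slightly delicate case being $d=2$, where the logarithm in $\omega_2$ forces one to split $\sqrt t\le 1/e$ from $\sqrt t>1/e$.

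The main obstacle is the tail estimate: truncating the time integration at $s=2t$ is essential, because $\int_0^\infty\bm{p}_s(w)\,\d s$ diverges when $d\le 2$, and the abstract asymptotic \eqref{pot:bound} is only valid on the unit ball. Consequently the explicit Gaussian decay (equivalently, the incomplete-Gamma representation) has to be used directly to damp contributions from $r\gg\sqrt t$ before they can be absorbed. Once this is done, combining the two regions yields the claimed inequality with constant depending only on $d$.
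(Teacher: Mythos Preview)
Your proof is correct and shares the paper's overall architecture---pass to polar coordinates via $|f(w)|\le\bar f(\|w\|)$, split the radial integral at $r=\sqrt t$, and absorb the tail contribution $t\bar f(\sqrt t)$ back into $\int_0^{\sqrt t}\bar f(r)\omega_d(r)\,\d r$ using the monotonicity of $\bar f$. Where you diverge is in the execution of the two pieces. For the inner region $r\le\sqrt t$ the paper performs the substitution $v=s/r^2$ and then treats $d=1$, $d=2$, $d\ge3$ by separate elementary estimates of $\int_0^{2t/r^2}v^{-d/2}e^{-1/(2v)}\,\d v$; your bound $I(r,t)\le e^{2t}\bm v_1(w)$ followed by the ready-made asymptotic \eqref{pot:bound} collapses all three cases at once and is cleaner. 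For the tail $r>\sqrt t$ the paper invokes l'H\^opital on $\int_0^\delta v^{-d/2}e^{-1/(2v)}\,\d v$ as $\delta\downarrow0$ to extract the bound $Q_1\lesssim t\bar f(\sqrt t)$; your incomplete-Gamma representation together with the Fubini identity $\int_{1/4}^\infty\Gamma(d/2-1,v)\,\d v\le\Gamma(d/2)$ reaches the same conclusion by a more explicit computation. Both routes are valid; yours is slightly more streamlined because it leverages the potential-kernel estimate already established in \S\ref{sec:Dalang} rather than redoing that analysis dimension by dimension.
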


\begin{proof}
	First of all, let us observe that
	\begin{align*}	
		\int_0^{2t}\d s
			\int_{\R^d}\d w\ \bm{p}_s(w)|f(w)|
			&\lesssim\int_0^{2t}s^{-d/2}\,\d s\int_0^\infty r^{d-1}\,\d r\
			\exp\left(-\frac{r^2}{2s}\right)\bar{f}(r)\\
		&=\int_0^\infty r^{d-1}\bar{f}(r)\,\d r\int_0^{2t}\d s\
			s^{-d/2}\exp\left(-\frac{r^2}{2s}\right),
	\end{align*}
	where the implied constant depends only on $d$.
	Therefore, a change of variables yields
	\begin{align*}	
		\int_0^{2t}\d s
			\int_{\R^d}\d w\ \bm{p}_s(w)|f(w)|
			&\lesssim\int_0^\infty r\bar{f}(r)\,\d r\int_0^{2t/r^2}\d v\
			v^{-d/2}\exp\left(-\frac{1}{2v}\right)\\
		&= Q_1 + Q_2,
	\end{align*}
	where
	\begin{align*}
		Q_1 &:= \int_{\sqrt t}^\infty r\bar{f}(r)\,\d r\int_0^{2t/r^2}\d v\
			v^{-d/2}\exp\left(-\frac{1}{2v}\right),\\
		Q_2 &:= \int_0^{\sqrt t} r\bar{f}(r)\,\d r\int_0^{2t/r^2}\d v\
			v^{-d/2}\exp\left(-\frac{1}{2v}\right),
	\end{align*}
	and the implied constant depends only on $d$.
	Because $\bar{f}$ is monotonically non-increasing, we apply l'H\^opitals rule
	to $\int_0^\delta v^{-d/2}\exp(-1/(2v))\,\d v$ (as $\delta\downarrow0$) in order
	to see that
	\[
		Q_1\lesssim
		\bar{f}\left(\sqrt t\right)\int_{\sqrt t}^\infty r\left(\frac{t}{r^2}\right)^{-(d-4)/2}
		\exp\left( -\frac{r^2}{4t}\right)\d r
		\lesssim \frac{\bar{f}\left(\sqrt t\right)}{t^{(d-4)/2}}\int_{\sqrt t}^\infty r^{d-3}
		\exp\left( -\frac{r^2}{4t}\right)\d r,
	\]
	with no parameter dependencies other than dependency on $d$.
	Therefore, a change of variables yields
	$Q_1\lesssim t\bar{f}(\sqrt t)\propto\bar{f}(\sqrt t)\int_0^{\sqrt t}r\,\d r$ uniformly for all $t>0$.
	Since $\bar{f}$ is monotonically non-increasing, this
	in turn implies that
	\[
		Q_1\lesssim \int_0^{\sqrt t}\bar{f}(r)\omega_d(r)\,\d r,
	\]
	simultaneously for every $t>0$. This is our final estimate for $Q_1$.

	Our estimate for $Q_2$ proceeds by studying the cases $d = 1$, $d=2$ and $d\ge3$ separately.
	First consider the case that $d\ge3$. In that case,
	\[
		Q_2 \le
		\int_0^{\sqrt t} r\bar{f}(r)\,\d r\int_0^\infty\d v\
		v^{-d/2}\exp\left(-\frac{1}{2v}\right)
		\propto\int_0^{\sqrt t}r\bar{f}(r)\,\d r
		=\int_0^{\sqrt t}\bar{f}(r)\omega_d(r)\,\d r,
	\]
	where the implied constant depends only on $d$.
	On the other hand, if $d=2$, then
	\begin{align*}
		Q_2&\le\int_0^{\sqrt t} r\bar{f}(r)\,\d r\int_0^{2t/r^2}\d v\
			v^{-1}\exp\left(-\frac{1}{2v}\right)
			\lesssim\int_0^{\sqrt t} r\bar{f}(r)
			\left( 1 + \int_1^{2t/r^2}\frac{\d v}{v}\right)\d r\\
		&\lesssim\int_0^{\sqrt t} \bar{f}(r)  r\log_+\left( \frac{\sqrt t}{r}\right)\d r
			\le\int_0^{\sqrt t}\bar{f}(r)  r\log_+(1/r)\,\d r=
			\int_0^{\sqrt t}\bar{f}(r)\omega_2(r)\,\d r,
	\end{align*}
	uniformly for all $0 < t < 1$.
	
    	If $d=1$, then
	\begin{align*}
		\int_0^{\sqrt t} r\bar{f}(r)\,\d r\int_0^{2t/r^2}\d v\
			v^{-d/2}\exp\left(-\frac{1}{2v}\right) &\le
			\int_0^{\sqrt t} r\bar{f}(r)\,\d r\int_0^{2t/r^2}\d v\
			v^{-1/2}\exp\left(-\frac{1}{2v}\right)\\
		&\lesssim\int_0^{\sqrt t}r\bar{f}(r)\,\d r\left( 1 +
			\int_1^{2t/r^2}\frac{\d v}{\sqrt v}\right)\\
		&\lesssim \int_0^{\sqrt t}\bar{f}(r)\,\d r
			= \int_0^{\sqrt t}\bar{f}(r)\omega_1(r)\,\d r,
	\end{align*}
   	 uniformly for all $0 < t < 1$.

    	Combine the bounds for $Q_1$ and $Q_2$ in order to deduce the lemma.
\end{proof}

Before we proceed further with our technical estimates,
let us define a dimension-dependent \emph{gauge function} $\gamma_d$ as follows:
\begin{equation}\label{gamma_d}
	\gamma_d(t) := \sup_{s\in(0,1)}\left( t\bar{\bm{v}}_1(s) \wedge
	\int_0^s\bar{f}(r)\omega_d(r)\,\d r\right)
	\qquad\text{for all $t>0$},
\end{equation}
where, for every $\lambda,t>0$,
\begin{equation}\label{barbar}
	\bar{\bm{v}}_\lambda(s) = \int_0^\infty\e^{-\lambda t}
	\bar{\bm{p}}_t(s)\,\d t \quad \text{and} \quad
	\bar{\bm{p}}_t(s):= (2\pi t)^{-d/2}\exp(-s^2/(2t))
	\qquad[s>0]
\end{equation}
are analogous to $\bm{v}_\lambda$ [see \eqref{HRD}] and $\bm{p}_t$
[see \eqref{p}], but are now functions on $(0\,,\infty)$.

The gauge function $\gamma_d$ will play an important role in the sequel. Therefore, let us
identify some of its first-order properties first.

\begin{lemma}\label{lem:gamma_d}
	$\gamma_1(t)\asymp t$ uniformly for all $t\in[0\,,1]$.
	Moreover,
	\begin{equation}\label{gamma_d:behavior}
		\lim_{t\to0^+}\gamma_d(t)=0\quad\text{and}\quad
		\liminf_{t\to0^+} \frac{\gamma_d(t)}{t}=\infty\quad
		\text{for every $d\ge 2$.}
	\end{equation}
\end{lemma}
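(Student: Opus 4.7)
The plan rests on two inputs. First, the sharp potential estimate \eqref{pot:bound} yields
\[
    \bar{\bm{v}}_1(s) \asymp s^{-(d-1)}\omega_d(s)
    \qquad\text{uniformly for $s \in (0,1]$,}
\]
which is bounded above and below by positive constants when $d=1$, and diverges as $s \downarrow 0$ when $d \ge 2$; moreover $\bar{\bm{v}}_1$ is continuous on $(0\,,\infty)$, hence bounded on every subinterval $[s_0\,,1]$ with $s_0 > 0$. Second, Lemma \ref{lem:PD} together with $h \in \mathcal{G}_p(\R^d)$ imply the function $\phi(s) := \int_0^s \bar{f}(r)\omega_d(r)\,\d r$ is finite, non-decreasing, vanishes as $s \downarrow 0$, and is strictly positive for every $s > 0$ in the non-trivial case $h \not\equiv 0$ (which one may assume, since $|h|*|\tilde h|$ is a non-negative, continuous-off-$\{0\}$ function whose Fourier transform $|\widehat{|h|}|^2$ is not identically zero).

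For the first claim $\gamma_1(t) \asymp t$, the upper bound follows by dropping the minimum and using the boundedness of $\bar{\bm{v}}_1$ on $(0\,,1]$ when $d = 1$: $\gamma_1(t) \le t\sup_{s\in(0,1)}\bar{\bm{v}}_1(s) \lesssim t$. For the lower bound, fix any $s^*\in(0\,,1)$; for $t$ small enough that $t\bar{\bm{v}}_1(s^*) \le \phi(s^*)$ the minimum at $s^*$ equals $t\bar{\bm{v}}_1(s^*)\gtrsim t$, and monotonicity of $t\mapsto\gamma_1(t)$ extends this (with a possibly worse constant) to all $t\in[0\,,1]$.

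For the vanishing claim at $d \ge 2$, I will split the supremum at a threshold $s_0 \in (0\,,1)$. Given $\varepsilon>0$, first choose $s_0$ so small that $\phi(s_0)<\varepsilon/2$; then using the $\phi$-term in the minimum bounds the part of the supremum over $(0\,,s_0]$ by $\phi(s_0)<\varepsilon/2$, uniformly in $t$. For $s\in[s_0\,,1)$, using the $t\bar{\bm{v}}_1$-term and the boundedness of $\bar{\bm{v}}_1$ on $[s_0\,,1]$ gives the bound $t\cdot M(s_0)$, which drops below $\varepsilon/2$ for $t$ sufficiently small. Adding the two yields $\gamma_d(t)<\varepsilon$ for small $t$.

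For the divergence claim, I observe that
\[
    \frac{\gamma_d(t)}{t} \ge \bar{\bm{v}}_1(s) \wedge \frac{\phi(s)}{t}
    \qquad\text{for every $s\in(0\,,1)$ and $t>0$.}
\]
Fixing $s$ with $\phi(s) > 0$ and letting $t\downarrow 0$ forces the second term to $+\infty$, so $\liminf_{t\to 0^+}\gamma_d(t)/t \ge \bar{\bm{v}}_1(s)$. Sending $s\downarrow 0$ and invoking the divergence of $\bar{\bm{v}}_1$ at the origin (valid for $d\ge 2$) completes the proof. The only point requiring care is the order of limits: one must fix $s$, send $t\to 0$ first, and only then send $s\to 0$; the strict positivity $\phi(s)>0$ for $s>0$ — the one place where the non-triviality of $h$ is used — is what makes the inner limit large.
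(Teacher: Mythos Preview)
Your proof is correct and follows essentially the same route as the paper's: bound $\bar{\bm v}_1$ on $(0,1]$ when $d=1$ to get $\gamma_1(t)\lesssim t$, split the supremum at a threshold to get $\gamma_d(t)\to 0$, and use $\gamma_d(t)\ge t\bar{\bm v}_1(s)\wedge\phi(s)$ at a fixed $s$ (letting $t\downarrow 0$, then $s\downarrow 0$) for both the lower bound in $d=1$ and the divergence in $d\ge 2$. The only differences are cosmetic --- you invoke the asymptotic \eqref{pot:bound} rather than the explicit value $\bar{\bm v}_1(0)=2^{-1/2}$, and you make explicit the (implicit in the paper) non-triviality assumption $h\not\equiv 0$ needed for $\phi(s)>0$.
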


\begin{proof}
	First of all, $\gamma_1(t)\lesssim t$ uniformly for all $t\ge0$
	because $\bar{\bm{v}}_1(r)\le\bar{\bm{v}}_1(0)=2^{-1/2}<\infty$.
	Next, let us choose and fix $\delta\in(0\,,1)$, and note that
	\begin{align*}
		\gamma_d(t) & =
			\sup_{s\in(0,\delta)}\left( t\bar{\bm{v}}_1(s) \wedge
			\int_0^s\bar{f}(r)\omega_d(r)\,\d r\right)
			\vee \sup_{s\in(\delta,1)}\left( t\bar{\bm{v}}_1(s) \wedge
			\int_0^s\bar{f}(r)\omega_d(r)\,\d r\right)\\
		&\le\int_0^{\delta}\bar{f}(r)\omega_d(r)\,\d r \vee t\bar{\bm{v}}_1(\delta).
	\end{align*}
	First let $t\downarrow 0$ and then let $\delta\downarrow 0$ in order to see that
	$\lim_{t\to0^+}\gamma_d(t)=0$.
	This proves half of the assertion \eqref{gamma_d:behavior}.
	Finally, we observe that for every $\delta\in(0\,,1)$,
	\[
		\gamma_d(t)\ge t\bar{\bm{v}}_1(\delta)\wedge
		\int_0^\delta\bar{f}(r)\omega_d(r)\,\d r\sim t\bar{\bm{v}}_1(\delta)
		\qquad\text{as $t\to0^+$.}
	\]
	Since $\bar{\bm{v}}_1(0+)<\infty$ iff $d=1$,
	this proves that $\gamma_1(t)\gtrsim t$ uniformly for
	all $t\in[0\,,1]$, and also
	completes the remaining half of \eqref{gamma_d:behavior}.
\end{proof}

Our next lemma is a strong localization result. It might help to
recall the definition of the gauge function $\gamma_d$ from \eqref{gamma_d}.

\begin{lemma}\label{lem:N(u-um)}
	Choose and fix a a function
	$h\in\mathcal{G}_p(\R^d)$ for some $p>1$.
	Then, for every $\varepsilon\in(0\,,1)$ and $k \ge 2$,
	there exists $c_h=c_h(\varepsilon\,,d\,,k)>0$
	such that
	\begin{equation}\label{eq:N(u-um)}
		\mathcal{N}_{\beta,k}\left( u - u^{(m,h)}\right) \le
		c_h\sqrt{\gamma_d\left(m^{d-2}\e^{-m^2/2}\right)},
	\end{equation}
	uniformly for all $\beta\ge1\vee\Lambda_h(2(1-\varepsilon)^2/[z_k\text{\rm Lip}(\sigma)]^2)$
	and $m\ge1$. Moreover, if $H\in\mathcal{G}_p(\R^d)$ satisfies $|h|\le H$,
	then $c_h\le c_H$.
\end{lemma}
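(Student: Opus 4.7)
The plan is to adapt the contraction argument underlying Theorem~\ref{th:exist} to the difference $u-u^{(m,h)}$. Subtracting the mild formulations \eqref{SHE} and \eqref{eq:u^m,h} gives a natural decomposition
\[
u(t,x)-u^{(m,h)}(t,x) = E_1(t,x) + E_2(t,x),
\]
where $E_1$ is the Walsh integral over $(0,t)\times\mathbb{B}_{m\sqrt t}(x)$ of $\bm{p}_{t-s}(x-y)[\sigma(u(s,y))-\sigma(u^{(m,h)}(s,y))]$, and $E_2$ is the Walsh integral over the complement $(0,t)\times\mathbb{B}_{m\sqrt t}(x)^c$ of $\bm{p}_{t-s}(x-y)\sigma(u(s,y))$, both against $\eta^{(h)}$. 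For $E_1$, dropping the spatial indicator and repeating the Lipschitz contraction estimate \eqref{u_n-u} via Lemma~\ref{lem:Young} gives $\mathcal{N}_{\beta,k}(E_1)\le(1-\varepsilon)\mathcal{N}_{\beta,k}(u-u^{(m,h)})$, valid whenever $\beta\ge\Lambda_h(2(1-\varepsilon)^2/[z_k\mathrm{Lip}(\sigma)]^2)$. The remaining task is to estimate $\mathcal{N}_{\beta,k}(E_2)$.

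For $E_2$, the Burkholder--Davis--Gundy inequality combined with \eqref{SID} and the uniform moment bound of Theorem~\ref{th:exist} reduces matters to the deterministic estimate
\[
J := \int_0^t \e^{-2\beta r}\,\d r \iint_{\|v\|,\,\|v+w\|>m\sqrt t} \bm{p}_r(v)\bm{p}_r(v+w)|f(w)|\,\d v\,\d w,
\]
after the change of variables $v=y-x$, $w=z-y$, $r=t-s$. I would fix an auxiliary parameter $s_*\in(0,1)$ and split $J$ according to $\|w\|\le s_*$ versus $\|w\|>s_*$. On $\{\|w\|\le s_*\}$, I drop the indicators in $v$ and use $\int\bm{p}_r(v)\bm{p}_r(v+w)\,\d v=\bm{p}_{2r}(w)$; integrating in $r$ produces $\bm{v}_{2\beta}(w)/2$, and combining with $|f(w)|\le\bar f(\|w\|/2)$, the asymptotic \eqref{pot:bound}, and polar coordinates yields a bound of order $\int_0^{s_*}\bar f(r)\omega_d(r)\,\d r$ (morally Lemma~\ref{int_0^t}). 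On $\{\|w\|>s_*\}$, I would combine the clean Gaussian tail
\[
\int_{\|v\|>m\sqrt t}\bm{p}_r(v)\,\d v \lesssim m^{d-2}\e^{-m^2/2}\qquad\text{uniformly in $r\in(0,t]$ and $m\ge1$,}
\]
with the identity $\bm{p}_r(v)\bm{p}_r(v+w)=\bm{p}_{2r}(w)\bm{p}_{r/2}(v+w/2)$ to extract the factor $m^{d-2}\e^{-m^2/2}$; the hypothesis $\beta\ge 1$ then lets me dominate the residual $r$-integral by a $\lambda=1$ heat potential, and the monotonicity of $\bar{\bm{v}}_1$ brings the cutoff outside the $w$-integration to produce the bound $m^{d-2}\e^{-m^2/2}\,\bar{\bm{v}}_1(s_*)$.

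Assembling the two pieces,
\[
J \lesssim_{h,\varepsilon,k} \int_0^{s_*}\bar f(r)\omega_d(r)\,\d r + m^{d-2}\e^{-m^2/2}\,\bar{\bm{v}}_1(s_*)\qquad\text{for every }s_*\in(0,1).
\]
The right-hand side is the sum of a monotone increasing and a monotone decreasing function of $s_*$, so its infimum is attained at the crossing, where both summands equal $\gamma_d(m^{d-2}\e^{-m^2/2})$; hence $J \lesssim_{h,\varepsilon,k}\gamma_d(m^{d-2}\e^{-m^2/2})$. Combining with the near-term absorption, and solving (using that $\mathcal{N}_{\beta,k}(u-u^{(m,h)})<\infty$ by Theorem~\ref{th:exist} and Lemma~\ref{lem:u^m,h}), one obtains \eqref{eq:N(u-um)}. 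The monotonicity $c_h\le c_H$ whenever $|h|\le H$ is transparent from the proof, since every estimate involves only $|f|$, $|h|*|\tilde h|$, and $\bar f$, all monotone in $|h|$. The main technical obstacle is ensuring that the Gaussian tail extraction on $\{\|w\|>s_*\}$ genuinely produces $\bar{\bm{v}}_1(s_*)$ rather than an integrated version or a looser exponent $\e^{-cm^2}$ with $c<1/2$; this is where the condition $\beta\ge 1$ and the careful decoupling of the $v$-tail from the $w$-weight are both essential.
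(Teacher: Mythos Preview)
Your decomposition $u-u^{(m,h)}=E_1+E_2$ and the contraction estimate $\mathcal{N}_{\beta,k}(E_1)\le(1-\varepsilon)\mathcal{N}_{\beta,k}(u-u^{(m,h)})$ are correct and coincide with the paper's treatment of $I_1$. The difficulty is entirely in $E_2$, and here your proposed argument has a genuine gap that you yourself flag but do not resolve.

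You claim that on $\{\|w\|>s_*\}$ the identity $\bm{p}_r(v)\bm{p}_r(v+w)=\bm{p}_{2r}(w)\bm{p}_{r/2}(v+w/2)$, together with the Gaussian tail bound in $v$, produces $m^{d-2}\e^{-m^2/2}\,\bar{\bm{v}}_1(s_*)$. But once you factor out $\bm{p}_{2r}(w)$, the remaining integral is $\int_{\|v\|>m\sqrt t}\bm{p}_{r/2}(v+w/2)\,\d v$, a Gaussian centered at $-w/2$. For $\|w\|$ large (say $\|w\|>2m\sqrt t$), this probability is \emph{not} small; it is close to $1$. So you cannot extract the factor $m^{d-2}\e^{-m^2/2}$ uniformly in $w$, and the ``decoupling'' you invoke does not follow from the identity. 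Conversely, if you instead bound $|f(w)|\le\bar f(s_*)$ and integrate out $w$ first, you obtain $m^{d-2}\e^{-m^2/2}\,\bar f(s_*)$ rather than $m^{d-2}\e^{-m^2/2}\,\bar{\bm v}_1(s_*)$, which does not match the form of $\gamma_d$.

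The paper resolves this by a different mechanism: it does \emph{not} introduce a free parameter $s_*$ at all. Instead it gives \emph{two} independent bounds on $\|I_2\|_k^2$. The first exploits both constraints $\|y\|,\|z\|>m\sqrt t$ by using one for a pointwise bound $\bm{p}_s(z)\le\bar{\bm p}_s(m\sqrt t)$ and the other for the tail integral; this yields $m^{d-2}\e^{-m^2/2}\,\bar{\bm v}_1(\sqrt t)$, where the argument of $\bar{\bm v}_1$ is forced to be $\sqrt t$ by the constraint, not a free parameter. The second bound discards the constraints entirely and uses Lemma~\ref{int_0^t} to obtain $\int_0^{\sqrt t}\bar f(r)\omega_d(r)\,\d r$. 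For each fixed $t\in(0,1)$ one then takes the \emph{minimum} of these two bounds, and $\gamma_d$ arises precisely as the supremum over $t$ (equivalently over $s=\sqrt t\in(0,1)$) of that minimum, which is exactly what $\mathcal{N}_{\beta,k}$ requires. The case $t\ge1$ is handled separately using only the first bound and the fact that $\gamma_d(a)\gtrsim a$. In short, the role of $s$ in the definition of $\gamma_d$ is played by $\sqrt t$, not by an auxiliary cut in $w$-space; this is the idea your sketch is missing.
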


We mentioned right before Lemma \ref{lem:gamma_d} that
the function $\gamma_d$ is a gauge function
that plays an important role in our analysis. Lemma
\ref{lem:N(u-um)} explains
the choice of the word ``gauge function'' by showing that
$\gamma_d$ indeed shows how well we may approximate
the solution to \eqref{SHE} by the strongly-localized process $u^{(m,h)}$.
In fact, we may unscramble the preceding to see that, in the notation of
Lemma \ref{lem:N(u-um)},
\[
	\sup_{x\in\R^d}\E\left( \left| u(t\,,x) - u^{(m,h)}(t\,,x)\right|^k\right)
	\le c^k\exp\left\{ \left[1\vee k\Lambda_h\left(
	\frac{2(1-\varepsilon)^2}{[z_k\text{\rm Lip}(\sigma)]}
	\right)\right]t\right\}
	\left[\gamma_d\left(m^{d-2}\e^{-m^2/2}\right)\right]^{k/2}.
\]

\begin{proof}[Proof of Lemma \ref{lem:N(u-um)}]
	Choose and fix an arbitrary $k\ge 2$.
	By monotonicity, it suffices to consider only the case that
	\begin{equation}\label{def:beta}
		\beta = 1\vee
		\Lambda_h\left( \frac{2(1-\varepsilon)^2}{[z_k\text{\rm Lip}(\sigma)]^2}\right),
	\end{equation}
	an identity which we assume holds throughout the proof.
	
	For every $t>0$ and $x\in\R^d$ we can write
	\[
		\mathcal{D}(t\,,x) := u(t\,,x) - u^{(m,h)}(t\,,x)= I_1 + I_2,
	\]
	where
	\begin{align*}
		I_1&:=\int_{(0,t)\times\mathbb{B}_{m\sqrt t}(x)}
			\bm{p}_{t-s}(x-y)\left[\sigma\left(u(s\,,y)\right)
			-\sigma\left( u^{(m,h)}(s\,,y)\right)\right]\eta^{(h)}(\d s\,\d y),\\
		I_2 &:=\int_{(0,t)\times[\mathbb{B}_{m\sqrt t}(x)]^c}
			\bm{p}_{t-s}(x-y)\sigma\left(u(s\,,y)\right)
			\eta^{(h)}(\d s\,\d y).
	\end{align*}
	We bound the $k$-th moments of $I_1$ and $I_2$ in this order.
	
	First, note that the method of proof of Lemma \ref{lem:Young} yields
	\[
		\|I_1\|_k^2\le z_k^2\int_0^t\d s\int_{\mathbb{B}_{m\sqrt t}(x)}\d y
		\int_{\mathbb{B}_{m\sqrt t}(x)} \d z\
		\bm{p}_{t-s}(x-y)\bm{p}_{t-s}(x-z)
		\|\mathcal{A}(s\,,y)\|_k\|\mathcal{A}(s\,,z)\|_k|f(y-z)|,
	\]
	where $\mathcal{A}(s\,,y) := \sigma(u(s\,,y))
	-\sigma( u^{(m,h)}(s\,,y)).$
	Because $\sigma$ is Lipschitz continuous, it follows that
	$\|\mathcal{A}(s\,,y)\|_k\le\text{\rm Lip}(\sigma)\|\mathcal{D}(s\,,y)\|_k$, and hence
	\begin{align*}
		\|I_1\|_k^2 &\le z_k^2[\text{\rm Lip}(\sigma)]^2
			\left[\sup_{\substack{s\le t\\y\in\R^d}}\e^{-\beta s}
			\left\|\mathcal{D}(s\,,y)\right\|_k\right]^2
			\int_0^t \e^{2\beta s}\,\d s\int_{\R^d}\d y
			\int_{\R^d} \d z\
			\bm{p}_{t-s}(y)\bm{p}_{t-s}(z)|f(y-z)|\\
		&\le z_k^2\e^{2\beta t}[\text{\rm Lip}(\sigma)]^2
			\left[\sup_{\substack{s\le t\\y\in\R^d}}\e^{-\beta s}\,
			\left\| \mathcal{D}(s\,,y)\right\|_k\right]^2
			\int_0^\infty \e^{-2\beta s}\,\d s \int_{\R^d} \d w\ \bm{p}_{2s}(w)|f(w)|\\
		&= \frac12 z_k^2\e^{2\beta t}[\text{\rm Lip}(\sigma)]^2
			\left[\sup_{\substack{s\le t\\y\in\R^d}}\e^{-\beta s}
			\left\| \mathcal{D}(s\,,y)\right\|_k\right]^2
			\int_{\R^d}\bm{v}_\beta(w)\left( |h|*|\tilde{h}|\right)(w)\,\d w.
	\end{align*}
	Apply the definition of $\beta$---see \eqref{def:beta}---in order to see that
	\begin{equation}\label{EI_12}
		\e^{-\beta t}\|I_1\|_k\le(1-\varepsilon)
		\sup_{\substack{s\le t\\y\in\R^d}}\e^{-\beta s}
		\left\| \mathcal{D}(s\,,y)\right\|_k.
	\end{equation}
	This yields the desired bound for $\E(|I_1|^k)$.
	
	Next we estimate $\E(|I_2|^k)$ as follows:
	\[
		\|I_2\|_k^2 \le z_k^2\int_0^t\d s\int_{\|y\|>m\sqrt t}\d y\int_{\|z\|>m\sqrt t}\d z\
		\bm{p}_{t-s}(y)\bm{p}_{t-s}(z)\|\mathcal{B}(s\,,y)\|_k
		\|\mathcal{B}(s\,,z)\|_k |f(y-z)|,
	\]
	where $\mathcal{B}(s\,,y) := \sigma( u(s\,,y)).$ Apply the
	Lipschitz continuity of $\sigma$ to see that
	\[
		\|\mathcal{B}(s\,,y)\|_k \le |\sigma(0)| + \text{\rm Lip}(\sigma)
		\left\| u(s\,,y)\right\|_k\\
		\le |\sigma(0)| + \frac{\text{\rm Lip}(\sigma)}{\varepsilon}
		\left[ 1 +
		\frac{|\sigma(0)|}{\text{\rm Lip}(\sigma)}\right]
		\e^{\beta s},
	\]
	thanks to the moment bound of $u$ in Theorem \ref{th:exist},
	and owing to the definition of $\beta$; see \eqref{def:beta}. This yields
	$\|\mathcal{B}(s\,,y)\|_k \lesssim\varepsilon^{-1}\exp(\beta s)$,
	where the implied constant depends only on $\sigma$. Thus, we see that
	\begin{align*}
		\|I_2\|_k^2 &\lesssim\frac{z_k^2}{\varepsilon^2}
			\int_0^t\e^{2\beta s}\,\d s\int_{\|y\|>m\sqrt t}\d y\int_{\|z\|>m\sqrt t}\d z\
			\bm{p}_{t-s}(y)\bm{p}_{t-s}(z)|f(y-z)|\\
		&\le\frac{z_k^2\e^{2\beta t}}{\varepsilon^2}
			\int_0^t\e^{-2\beta s}\,\d s\int_{\|y\|>m\sqrt t}\d y\int_{\|z\|>m\sqrt t}\d z\
			\bm{p}_s(y)\bm{p}_s(z)|f(y-z)|\\
		&= \frac{z_k^2\e^{2\beta t}}{\varepsilon^2}(W_1+W_2),
	\end{align*}
	where the implied constant depends only on $\sigma$, and
	\begin{align*}
		W_1 &:= \int_0^t\e^{-2\beta s}\,\d s\int_{\|y\|>m\sqrt t}\d y\int_{\substack{\|z\|>m\sqrt t\\
			\|y-z\|<1}}\d z\
			\bm{p}_s(y)\bm{p}_s(z)|f(y-z)| ,\\
		W_2 &:= \int_0^t\e^{-2\beta s}\,\d s\int_{\|y\|>m\sqrt t}\d y\int_{\substack{\|z\|>m\sqrt t\\
			\|y-z\|>1}}\d z\
			\bm{p}_s(y)\bm{p}_s(z)|f(y-z)| .
	\end{align*}

	In order to evaluate/estimate $W_1$, let us first choose and fix some $s\in(0\,,t)$ and observe that
	\begin{align*}
		\int_{\|y\|>m\sqrt t}\d y\int_{\substack{\|z\|>m\sqrt t\\
			\|y-z\|<1}}\d z\
			\bm{p}_s(y)\bm{p}_s(z)|f(y-z)| &\le
			\bar{\bm{p}}_s\left(m\sqrt t\right)\int_{\mathbb{B}_1}|f(w)|\,\d w
			\int_{\|y\|>m\sqrt t}\bm{p}_s(y)\,\d y\\
		&\lesssim m^{d-2}\e^{-m^2/2}\bar{\bm{p}}_s\left(m\sqrt t\right)\int_{\mathbb{B}_1}|f(w)|\,\d w,
	\end{align*}
	where the implied constant depends only on $d$. In the last line we have used
	scaling, and the well-known fact that
	\begin{equation}\label{Mills}
		\int_{\|y\|>\nu}\bm{p}_1(y)\,\d y =
		\frac{(2+o(1))}{2^{d/2}\Gamma(d/2)}\nu^{d-2}\e^{-\nu^2/2}
		\qquad\text{as $\nu\to\infty$}.
	\end{equation}
	According to Lemma \ref{lem:PD} (see, in particular, \eqref{eq:PD:int}), the preceding integral is finite. Therefore,
	\[
		W_1 \lesssim m^{d-2}\e^{-m^2/2}\bar{\bm{v}}_{2\beta}\left(m\sqrt t\right)
		\int_{\mathbb{B}_1}|f(w)|\,\d w\le
		m^{d-2}\e^{-m^2/2}\bar{\bm{v}}_{2\beta}\left(m\sqrt t\right)
		\int_{\mathbb{B}_1}\left( |h|*|\tilde{h}|\right)(x)\,\d w,
	\]
	where the implied constant depends only on $d$.
	Next we estimate $W_2$.
	
	Choose and fix $s\in(0\,,t)$ and note that, owing to \eqref{Mills},
	\begin{align*}
		\int_{\|y\|>m\sqrt t}\d y\int_{\substack{\|z\|>m\sqrt t\\
			\|y-z\|>1}}\d z\
			\bm{p}_s(y)\bm{p}_s(z)|f(y-z)| &\le
			\sup_{\|w\|>1}|f(w)|\left(\int_{\|y\|>m\sqrt t}\bm{p}_s(y)\,\d y\right)^2\\
		&\lesssim m^{2(d-2)}\e^{-m^2}\sup_{\|w\|>1}|f(w)|\\
		&\le \bar{f}(1)m^{2(d-2)}\e^{-m^2};
	\end{align*}
	see \eqref{bar(f)} for the latter notation.
	Thus, $W_2 \lesssim m^{2(d-2)}\e^{-m^2} \beta^{-1}\bar{f}(1),$
	where the implied constant depends only on $d$. This and the above inequality for
	$W_1$ together yield the following estimate of $\E(|I_2|^k)$:
	\begin{equation}\label{EI_22}
		\e^{-2\beta t}\|I_2\|_k^2 \lesssim\left(\frac{K(h)}{\varepsilon}\right)^2\left[
		m^{d-2}\e^{-m^2/2}\bar{\bm{v}}_{2\beta}\left(\sqrt t\right) +
		\frac{1}{\beta}m^{2(d-2)}\e^{-m^2}
		\right],
	\end{equation}
	where the implied constant depends only on $(d\,,k)$
	and
	\begin{equation}\label{K(h)}
		1+\int_{\mathbb{B}_1}|f(w)|\,\d w +  \bar{f}(1)
		\le 1+\int_{\mathbb{B}_1}\left( |h|*|\tilde{h}|\right)(x)\,
		\d w +  \bar{f}(1) =: K(h)<\infty;
	\end{equation}
	see \eqref{bar(f)}
	and Lemma \ref{lem:PD}.
	This and \eqref{EI_12} together yield a real number $C=C(d\,,k)>0$ such that
	\begin{equation}\label{prelim:ED}\begin{split}
		&\e^{-\beta t}\left\| \mathcal{D}(t\,,x) \right\|_k\le
			(1-\varepsilon)
			\sup_{\substack{s\le t\\y\in\R^d}}\left[\e^{-\beta s}
			\left\| \mathcal{D}(s\,,y)\right\|_k\right]
			+\e^{-\beta t}\|I_2\|_k\\
		&\le (1-\varepsilon)
			\sup_{\substack{s\le t\\y\in\R^d}}\left[\e^{-\beta s}
			\left\| \mathcal{D}(s\,,y)\right\|_k\right]
			+\frac{CK(h)}{\varepsilon}\left[
			m^{d-2}\e^{-m^2/2}\bar{\bm{v}}_{2\beta}\left(\sqrt t\right) +
			\frac{1}{\beta}m^{2(d-2)}\e^{-m^2}
			\right]^{1/2}\\
		&\le (1-\varepsilon)
			\sup_{\substack{s\le t\\y\in\R^d}}\left[\e^{-\beta s}
			\left\| \mathcal{D}(s\,,y)\right\|_k \right]
			+\frac{CK(h)}{\varepsilon}\left[
			m^{d-2}\e^{-m^2/2}\bar{\bm{v}}_1\left(\sqrt t\right) + m^{2(d-2)}\e^{-m^2}
			\right]^{1/2},
	\end{split}\end{equation}
	simultaneously for all $t>0$, $m\ge1$, and $x\in\R^d$. [We have also used the fact that
	$\beta\ge1$ in the last line; see \eqref{def:beta}.]
	In particular, uniformly for every $t\ge1$,
	\begin{align}\notag
		\e^{-\beta t} \left\| \mathcal{D}(t\,,x) \right\|_k \le&
			(1-\varepsilon)
			\sup_{\substack{s\le t\\y\in\R^d}}\left[\e^{-\beta s}
			\left\| \mathcal{D}(s\,,y) \right\|_k \right]
			+ \frac{CK(h)}{\varepsilon}\left[
			m^{d-2}\e^{-m^2/2}\bar{\bm{v}}_1(1) + m^{2(d-2)}\e^{-m^2}
			\right]^{1/2}\\\label{ED0}
		\le & (1-\varepsilon)
			\sup_{\substack{s\le t\\y\in\R^d}}\left[\e^{-\beta s}
			\left\| \mathcal{D}(s\,,y) \right\|_k \right]
			+ \frac{C'K(h)}{\varepsilon}\, m^{(d-2)/2}\e^{-m^2/4},
	\end{align}
	where $C$ and $C'$ depend only on $(d\,, k)$.
	
	Next, we proceed by estimating $\E(|I_2|^k)$ using a different idea than the one that
	led to \eqref{EI_22}, and using that different idea in \eqref{prelim:ED} instead. This idea works
	well when $t$ is small.
		
	Just as before,
	\begin{align*}
		\|I_2\|_k^2 &\le\int_0^t\d s\int_{\R^d}\d y\int_{\R^d}\d z\
			\bm{p}_{t-s}(y)\bm{p}_{t-s}(z)\|\mathcal{B}(s\,,y)\|_k
			\|\mathcal{B}(s\,,z)\|_k|f(y-z)|\\
		&\lesssim\frac{\e^{2\beta t}}{\varepsilon^2}\int_0^t\e^{-2\beta s}\,\d s
			\int_{\R^d}\d y\int_{\R^d}\d z\
			\bm{p}_s(y)\bm{p}_s(z)|f(y-z)|,
	\end{align*}
	where the implied constant is universal. Reorganize the integral and use the semigroup
	property of $\bm{p}$ (as we have done in the preceding lemmas several times)
	in order to see that
	\[
		\e^{-2\beta t}\|I_2\|_k^2 \lesssim\frac{1}{%
		\varepsilon^2}\int_0^{2t}\d s
		\int_{\R^d}\d w\ \bm{p}_s(w)|f(w)|,
	\]
	where the implied constant is still universal. Now apply Lemma \ref{int_0^t} in order to see that
	\[
		\e^{-2\beta t}\| I_2\|_k^2\lesssim\frac{1}{%
		\varepsilon^2}\int_0^{\sqrt t}\bar{f}(r)\omega_d(r)\,\d r
		\qquad\text{uniformly for all $t\in(0\,,1)$},
	\]
	where the implied constant depends only on $d$ and $k$, and $\bar{f}$
	was defined in \eqref{bar(f)}. Use this in \eqref{prelim:ED}
	instead of \eqref{EI_22} in order to see that there exists a real number
	$C=C(d\,,h)>0$ such that
	\begin{align*}
		\e^{-\beta t}\left\| \mathcal{D}(t\,,x)\right\|_k&\le
			(1-\varepsilon)
			\sup_{\substack{s\le t\\y\in\R^d}}\left[\e^{-\beta s}
			\left\| \mathcal{D}(s\,,y)\right\|_k\right]
			+\e^{-\beta t}\| I_2\|_k\\
		&\le (1-\varepsilon)
			\sup_{\substack{s\le t\\y\in\R^d}}\left[\e^{-\beta s}
			\left\| \mathcal{D}(s\,,y) \right\|_k \right]
			+\frac{C}{\varepsilon}\left[\int_0^{\sqrt t}\bar{f}(r)\omega_d(r)\,\d r
			\right]^{1/2},
	\end{align*}
	simultaneously for all $t\in(0\,,1)$. Combine this with \eqref{prelim:ED} to see that
	\begin{align*}
		\e^{-\beta t}\left\| \mathcal{D}(t\,,x)\right\|_k
		\le& (1-\varepsilon)
			\sup_{\substack{s\le t\\y\in\R^d}}\left[\e^{-\beta s}
			\left\| \mathcal{D}(s\,,y) \right\|_k \right]\\
			&+\frac{CK(h)}{\varepsilon}
			\min\left\{ m^{d-2}
			\e^{-m^2/2}\bar{\bm{v}}_1\left(\sqrt t\right)
			~,~ \int_0^{\sqrt t}\bar{f}(r)\omega_d(r)\,\d r\right\}^{1/2},
	\end{align*}
	uniformly for all $t\in(0\,,1)$. This and the definition \eqref{gamma_d} of
	the function $\gamma_d$ together yield
	\begin{equation}\label{comb1}
		\sup_{\substack{s\in(0,1)\\y\in\R^d}}\left[\e^{-\beta s}
		\left\| \mathcal{D}(s\,,y) \right\|_k \right] \le
		\frac{CK(h)}{\varepsilon}\sqrt{\gamma_d\left( m^{d-2}\e^{-m^2/2}\right)}.
	\end{equation}
	Also, we can read off from the above and \eqref{ED0} that
	for all $t\ge1$ and $x\in\R^d$,
	\begin{align*}
		\e^{-\beta t}\left\| \mathcal{D}(t\,,x) \right\|_k \le&
			\frac{CK(h)}{\varepsilon}\sqrt{\gamma_d\left( m^{d-2}\e^{-m^2/2}\right)}
			+ (1-\varepsilon)
			\sup_{\substack{s\in[1,t]\\y\in\R^d}}\left[\e^{-\beta s}
			\left\| \mathcal{D}(s\,,y) \right\|_k \right]\\
		&+ \frac{C'K(h)}{\varepsilon}\,m^{(d-2)/2}\e^{-m^2/4},
	\end{align*}
	where $C'=C'(d\,,k).$
	According to Lemma \ref{lem:gamma_d},
	$\gamma_d(a)\gtrsim a$ for all $a\in(0\,,1)$. Therefore,
	the third term on the right-hand side is dominated by a constant multiple
	of the first term, where the constant depends only on $(d\,,k)$. Thus,
	we find that
	\begin{equation}\label{comb2}
		\sup_{\substack{t\ge1\\
		x\in\R^d}}\left[\e^{-\beta t}
		\left\| \mathcal{D}(t\,,x) \right\|_k \right] \le
		\frac{C''K(h)}{\varepsilon}
		\sqrt{\gamma_d\left( m^{d-2}\e^{-m^2/2}\right)},
	\end{equation}
	where $C''=C''(d\,,k)$.
	We deduce \eqref{eq:N(u-um)} by combining \eqref{comb1} and \eqref{comb2}.
	Finally, suppose $H\in\mathcal{G}_p(\R^d)$ satisfies $|h|\le H$. Then,
	\eqref{K(h)} implies readily that $K(h)\le K(H)$, which implies the remaining assertion
	that $c_h\le c_H$.
\end{proof}

\section{A Poincar\'e-type inequality}\label{sec:Poincare}
The main result of this chapter is a type of
Poincar\'e-like inequality for the occupation measure of
$u(t)$, where $u$ solves \eqref{SHE}, and $t>0$ is fixed but otherwise arbitrary.
This Poincar\'e-type inequality
is the main technical innovation of the paper. We will see that,
among other things,
our Poincar\'e-type inequality implies the desired
spatial ergodicity of $u$.

\[
	\text{%
	Henceforth, we assume that the conditions of Theorem \ref{th:exist} are met.
	}
\]
In particular, we have also chosen and fixed a function $h\in\mathcal{G}_p(\R^d)$
for a fixed $p>1$.

Finally, in order to state our Poincar\'e-type inequality, let us recall the gauge function
$\gamma_d$ from \eqref{gamma_d}, and introduce a second gauge function,
\begin{equation}\label{tau_d}
	\tau_d(a) := \inf_{r>1}\left[ ar^d + \int_{\|w\|>r}|h(w)|^2\,\d w\right]
	\qquad\text{for all $a\ge0$}.
\end{equation}

\begin{theorem}[A Poincar\'e inequality]\label{th:Poincare}
	Choose and fix a real  $T>0$ and an integer $k\ge1$. Then,
	\begin{equation}\label{eq:Poincare}\begin{split}
		&\sup_{t\in[0,T]}\Var\left(\fint_{[0,N]^d} \prod_{l=1}^k g_l
			(u(t\,,x + \zeta^l))\,\d x\right) \\
		&\hskip1.5in\lesssim\inf_{a\in(0, c_d)}\left[
			\tau_d\left( \frac{|\log(1/a)|^{3d/2}+
			\max_{1\le j\le k}\|\zeta^j\|^d}{N^d}\right)
			+ \sqrt{\gamma_d(a)}\right],
	\end{split}\end{equation}
	where
	\begin{align}\label{c_d}
		c_d &=\begin{cases}
			\e^{-1/2} & \text{if $d \leq 3$}, \\
			(d - 2)^{-1+(d/2)}\e^{1-(d/2)} & \text{if $d \ge 4$},
		\end{cases}
	\end{align}
	uniformly for every real number $N>1$, all Lipschitz-continuous functions
	$g_1,\ldots,g_k:\R\to\R$ that satisfy \eqref{eq:WLOG}, and
	every $\zeta^1,\ldots,\zeta^k\in\R^d$.
\end{theorem}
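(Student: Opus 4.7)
The plan is to approximate each $u(t, x+\zeta^l)$ by a strongly localized random field whose values at well-separated spatial points are independent, and then exploit that independence to bound the variance. Fix $t \in [0,T]$ together with parameters $m \ge 1$, $n \in \mathbb{N}$, and $r > 1$, and consider the $n$-th Picard iterate $U(x) := u_n^{(m, h_r)}(t, x)$ of the strongly-localized SPDE \eqref{eq:u^m,h}, with $h$ replaced by the compact-support truncation $h_r := h\mathbf{1}_{\mathbb{B}_r}$. Starting from $u_0^{(m, h_r)} \equiv 1$ and using that each Picard step convolves $\xi$ against $h_r$ (supported in $\mathbb{B}_r$) and then integrates over a ball of radius $m\sqrt{t}$, induction on $n$ shows that $U(x)$ is measurable with respect to $\xi$ restricted to $(0, t) \times \mathbb{B}_{nm\sqrt{t}+r}(x)$. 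Setting $F(x) := \prod_{l=1}^k g_l(u(t, x+\zeta^l))$ and $F_\star(x) := \prod_{l=1}^k g_l(U(x+\zeta^l))$, it follows that $F_\star(x)$ and $F_\star(y)$ are \emph{independent} whenever $|x - y| > 2\rho$, where $\rho := nm\sqrt{T} + r + \max_l \|\zeta^l\|$.

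I then use the inequality
\[
\Var\left(\fint_{[0,N]^d} F\,\d x\right) \le 2\sup_x \|F(x) - F_\star(x)\|_2^2 + 2\Var\left(\fint_{[0,N]^d} F_\star\,\d x\right).
\]
For the first (approximation) term, I telescope the product using $|g_l(z)| \le |z|$ (from \eqref{eq:WLOG}) together with uniform $L^{2k}(\P)$ bounds from Theorem \ref{th:exist}, obtaining $\|F(x) - F_\star(x)\|_2 \lesssim \sum_l \|u(t, \cdot) - U(\cdot)\|_{2k}$. The decomposition
\[
u - U = (u - u^{(m, h)}) + (u^{(m, h)} - u^{(m, h_r)}) + (u^{(m, h_r)} - u_n^{(m, h_r)})
\]
bounds the three pieces by $\sqrt{\gamma_d(m^{d-2}\e^{-m^2/2})}$, $\|h\|_{L^2(\mathbb{B}_r^c)}$, and $(1-\varepsilon)^n$ via Lemmas \ref{lem:N(u-um)}, \ref{lem:localize:noise} (applied to the localized fixed-point equation; the proof is the same Young-type stochastic integral argument), and \ref{lem:u^m,h-u^m,h_n} respectively. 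For the second (main) term, the independence established above makes the covariance vanish outside $\{|x-y| \le 2\rho\}$, so Cauchy--Schwarz and uniform moment bounds on $U$ yield $\Var(\fint F_\star\,\d x) \lesssim \rho^d/N^d$.

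To conclude, given $a \in (0, c_d)$, I take $m$ to be the larger root of $m^{d-2}\e^{-m^2/2} = a$ (so $m \asymp \sqrt{|\log(1/a)|}$; the cutoff $c_d$ is exactly the maximum of $m^{d-2}\e^{-m^2/2}$, located at $m = 1$ for $d \le 3$ and $m = \sqrt{d-2}$ for $d \ge 4$), choose $n := \lceil |\log(1/a)|\rceil$ so that $(1-\varepsilon)^{2n} \lesssim \gamma_d(a)$, and leave $r > 1$ free. Then $\rho \lesssim |\log(1/a)|^{3/2}\sqrt{T} + r + \max_l \|\zeta^l\|$, yielding
\[
\Var \lesssim \frac{|\log(1/a)|^{3d/2} + r^d + \max_l\|\zeta^l\|^d}{N^d} + \gamma_d(a) + \|h\|_{L^2(\mathbb{B}_r^c)}^2.
\]
Using $\gamma_d(a) \le \sqrt{\gamma_d(a)}$ for $a$ small (cf.\ Lemma \ref{lem:gamma_d}), optimizing over $r > 1$ to convert the $r$-dependent terms into $\tau_d((|\log(1/a)|^{3d/2} + \max_l\|\zeta^l\|^d)/N^d)$, and infimizing over $a \in (0, c_d)$ completes the argument. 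The hard part is that the dependency radius $\rho$ grows linearly in the Picard depth $n$: one cannot use the limit $u^{(m, h)}$ directly (its spatial dependency is not compactly supported), so one must stop the iteration at a finite $n \asymp |\log(1/a)|$, and this is precisely the source of the $|\log(1/a)|^{3/2}$ factor in $\rho$ (and hence of the exponent $3d/2$ in the final bound).
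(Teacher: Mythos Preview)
Your proposal is correct and follows essentially the same route as the paper's proof (which proceeds via Theorem \ref{th:full}, assembled from Lemmas \ref{lem:g-g:1}--\ref{lem:g-g:3}): approximate $u$ by the finite-range Picard iterate $u_n^{(m,h_r)}$, exploit independence at separations exceeding the dependency radius to bound the localized variance, and then optimize over $(m,n,r)$ via the substitution $a = m^{d-2}\e^{-m^2/2}$. The only differences are cosmetic: the paper orders the three-step approximation as $u \to u^{(h_r)} \to u^{(m,h_r)} \to u_n^{(m,h_r)}$ (Lemmas \ref{lem:u-u^h_r}, \ref{lem:N(u-um)}, \ref{lem:u^m,h-u^m,h_n}) rather than your $u \to u^{(m,h)} \to u^{(m,h_r)} \to u_n^{(m,h_r)}$, and compares variances via $|\Var(\mathcal{X}) - \Var(\mathcal{Y})| \lesssim \|\mathcal{X}-\mathcal{Y}\|_2$ (Lemma \ref{lem:g-g:2}) instead of your sharper $\Var(\mathcal{X}) \le 2\|\mathcal{X}-\mathcal{Y}\|_2^2 + 2\Var(\mathcal{Y})$, which incidentally gives $\gamma_d(a)$ before you relax it to $\sqrt{\gamma_d(a)}$ and makes the squared $\|h\|_{L^2(\mathbb{B}_r^c)}^2$ match the definition of $\tau_d$ directly.
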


Before we prove Theorem \ref{th:Poincare}, we would like
to explain why this is a Poincar\'e-type inequality. In order to see that
consider the special case that $k=1$, and define for all reals $N\ge1$ and $t>0$,
the spatial occupation measure $\mu_{t,N}$ of the restriction of
$x\mapsto u(t\,,x)$ to $[0\,,N]^d$ via
\[
	\langle g\,,\mu_{t,N}\rangle := \int_{\R^d}g\,\d\mu_{t,N} := \fint_{[0,N]^d}g(u(t\,,x))\,\d x.
\]
Theorem \ref{th:Poincare} then says that for every $T>0$,
\begin{equation}\label{eq:P}
	\Var\langle g\,,\mu_{t,N}\rangle \lesssim
	\inf_{a\in(0,c_d)}\left[
	\tau_d\left( \frac{|\log(1/a)|^{3d/2}}{N^d}\right)
	+ \sqrt{\gamma_d(a)}\right],
\end{equation}
uniformly for all $t\in[0\,,T]$, $N\ge1$, and Lipschitz-continuous
functions $g:\R\to\R$ that satisfy $g(0)=0$ and $\text{\rm Lip}(g)=1$.
If $g$ is constant then the preceding is a trivial bound. Else,
we may replace $g$ by $(g-g(0))/\text{\rm Lip}(g)$ in \eqref{eq:P}
in order to see that for every $T>0$ there exists a constant $C_T
= C_T(\sigma\,,h\,,d)>0$ such that
\begin{equation}\label{eq:P:1}
	\Var\langle g\,,\mu_{t,N}\rangle \le C_T\inf_{a\in(0, c_d )}\left[
	\tau_d\left( \frac{|\log(1/a)|^{3d/2}}{N^d}\right)
	+ \sqrt{\gamma_d(a)} \right]
	[\text{\rm Lip}(g)]^2,
\end{equation}
uniformly for all real numbers $t\in[0\,,T]$ and $N\ge1$, and
for all Lipschitz-continuous
functions $g:\R\to\R$. Since $\text{\rm Lip}(g)=\|g'\|_{L^\infty(\R)}$,
\eqref{eq:P:1} is equivalent to the assertion that the occupation measure
$\mu_{t,N}$ satisfies a {\it bona fide} 
$L^\infty$-Poincar\'e inequality \cite{Evans,Ledoux}
with Poincar\'e constant
\[
	C_T\inf_{a\in(0,c_d)}\left[
	\tau_d\left( \frac{|\log(1/a)|^{3d/2}}{N^d}\right)
	+ \sqrt{\gamma_d(a)} \right],
\]
uniformly for all $N\ge1$ and $t\in[0\,,T]$.
Now that we have justified the terminology,
let us begin work toward establishing Theorem \ref{th:Poincare}.

The proof of Theorem \ref{th:Poincare}
requires a few technical lemmas, which we develop next.

\begin{lemma}\label{lem:tau_d}
	For all $d\ge1$, $\tau_d$ is monotonically increasing, $\tau_d(ca)\le c\tau_d(a)$
	for all $a>0$ and $c > 1$, and
	\[
		\lim_{a\to0^+}\tau_d(a)=0,\quad\text{and}\quad
		\tau_d(b)\ge b\qquad\text{for all $b\ge0$}.
	\]
\end{lemma}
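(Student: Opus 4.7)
The proof amounts to four elementary checks applied directly to the defining formula $\tau_d(a) = \inf_{r>1}[ar^d + I(r)]$, where I write $I(r) := \int_{\|w\|>r}|h(w)|^2\,\d w$ for brevity. Throughout I use the standing assumption of the section, namely $h\in\mathcal{G}_p(\R^d)$ for some $p>1$.

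Monotonicity in $a$ is immediate: for each fixed $r > 1$ the bracketed quantity is non-decreasing in $a$, so taking infima preserves the inequality. For the subhomogeneity estimate $\tau_d(ca) \le c\tau_d(a)$ with $c > 1$, the key observation is that $I(r) \ge 0$, hence $I(r) \le c I(r)$; thus for every admissible $r$,
\[
    car^d + I(r) \;\le\; car^d + cI(r) \;=\; c\bigl(ar^d + I(r)\bigr),
\]
and taking infima on both sides yields the claim.

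For the lower bound $\tau_d(b) \ge b$: every admissible $r$ satisfies $r > 1$ and hence $r^d > 1$, so each candidate value in the infimum is at least $br^d \ge b$; the degenerate case $b=0$ will be covered by the next step. For $\lim_{a \to 0^+}\tau_d(a) = 0$ the one non-algebraic input is the decay of $\|h\|_{L^2(\mathbb{B}_r^c)}^2 = I(r)$ as $r\to\infty$: since $h\in\mathcal{G}_p(\R^d)\subseteq\mathcal{F}_p(\R^d)$, Lemma \ref{lem:F_p}(2) gives $\|h\|_{L^2(\mathbb{B}_1^c)}<\infty$, and dominated convergence then forces $I(r)\downarrow 0$ as $r\to\infty$. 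Given $\varepsilon>0$, I would select $r_0 > 1$ with $I(r_0) < \varepsilon/2$ and observe that for $a < \varepsilon/(2r_0^d)$ the admissible choice $r = r_0$ already shows $\tau_d(a) \le ar_0^d + I(r_0) < \varepsilon$.

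None of these steps constitutes a genuine obstacle; the whole lemma is really a bookkeeping exercise about the infimum of a one-parameter family, and the only fact beyond pure algebra is the tail vanishing of $I$, which is already a consequence of the standing hypothesis $h\in\cup_{p>1}\mathcal{G}_p(\R^d)$ together with Lemma \ref{lem:F_p}.
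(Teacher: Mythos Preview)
Your proof is correct and follows essentially the same approach as the paper's own proof, which dismisses monotonicity and subhomogeneity as ``obvious'' and handles the lower bound and the limit at $0^+$ exactly as you do (fix $r>1$, let $a\downarrow0$, then let $r\to\infty$). Your version simply spells out the details more fully and makes explicit the appeal to Lemma~\ref{lem:F_p} for the finiteness of $\|h\|_{L^2(\mathbb{B}_1^c)}$.
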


\begin{proof}
	The monotonicity of $\tau_d$ and the multiplicative property of $\tau_d$ are obvious.
	Clearly, $\tau_d(b) \ge \inf_{r>1}(br^d)=b$ for all $b\ge0.$ Furthermore, we may first
	choose and fix some $r>1$ to see that
	$\tau_d(a)\le ar^d+\|h\|_{L^2(\bm{B}_r^c)}^2\to\|h\|_{L^2(\bm{B}_r^c)}^2$,
	as $a\to0^+$. Then let $r\to\infty$ in order to see that $\tau_d$ vanishes continuously at $0$.
\end{proof}


The following noise-localization lemma estimates how much
the solution to \eqref{SHE} is perturbed if we replace $h$ by
its localization $h_r$. This amounts to replacing $f$ by
$h_r*\tilde{h}_r$. For this reason, we denote the solution to
\eqref{SHE} by $u^{(h)}$ (instead of $u$), so that $u^{(h_r)}$
also makes sense but in that case the noise $\eta=\eta^{(h)}$ in \eqref{SHE}
--- see also \eqref{eta^h} --- is replaced
by $\eta^{(h_r)}$ etc.

\begin{lemma}\label{lem:u-u^h_r}
	For every $k\ge2$ there exists a real number $c=c(k\,,\sigma\,,h)>0$ such that
	\[
		\sup_{x\in\R^d}
		\E\left( \left| u^{(h)}(t\,,x) - u^{(h_r)}(t\,,x) \right|^k\right)
		\le c\,\e^{ct}\left(\int_{\|w\|>r}|h(w)|^2\,\d w\right)^{k/2},
	\]
	uniformly for every $t,r>0$.
\end{lemma}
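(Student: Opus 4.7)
The plan is to compare the mild formulations of $u^{(h)}$ and $u^{(h_r)}$ by an add-and-subtract decomposition. Since both processes have the same initial data $1$, one has
\begin{equation*}
    u^{(h)} - u^{(h_r)} \;=\; \bm{p}\circledast \sigma(u^{(h)})\,\bigl(\eta^{(h)} - \eta^{(h_r)}\bigr) \;+\; \bm{p}\circledast \bigl(\sigma(u^{(h)}) - \sigma(u^{(h_r)})\bigr)\,\eta^{(h_r)}.
\end{equation*}
The first (``noise perturbation'') piece is tailor-made for Lemma \ref{lem:localize:noise} with $Z=\sigma(u^{(h)})$, whence
\begin{equation*}
    \mathcal{N}_{\beta,k}\bigl(\text{first piece}\bigr)\le \frac{z_k\,\|h\|_{L^2(\mathbb{B}_r^c)}}{\sqrt{2\beta}}\,\mathcal{N}_{\beta,k}(\sigma(u^{(h)})).
\end{equation*}
The second (``solution perturbation'') piece is controlled by the stochastic Young inequality (Lemma \ref{lem:Young}) applied to the noise $\eta^{(h_r)}$, and then by the Lipschitz property of $\sigma$, giving
\begin{equation*}
    \mathcal{N}_{\beta,k}\bigl(\text{second piece}\bigr)\le z_k\,\text{\rm Lip}(\sigma)\,\mathcal{N}_{\beta,k}\bigl(u^{(h)}-u^{(h_r)}\bigr)\sqrt{\tfrac{1}{2}\int_{\R^d}\bm{v}_\beta(x)\,\bigl|h_r*\tilde{h}_r\bigr|(x)\,\d x}.
\end{equation*}

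The decisive observation that secures uniformity in $r$ is the pointwise bound $|h_r|\le|h|$, whence $|h_r*\tilde{h}_r|\le |h_r|*|\tilde{h}_r|\le |h|*|\tilde{h}|$, so the last integral is dominated uniformly in $r$ by $\int_{\R^d}\bm{v}_\beta(x)(|h|*|\tilde{h}|)(x)\,\d x$, which is finite by Lemma \ref{lem:Dalang:Lp} and tends to $0$ as $\beta\to\infty$. I then pick $\beta=\beta(k,\sigma,h)$ large enough that this integral forces the prefactor on the second piece to be at most $\tfrac12$. By Theorem \ref{th:exist}, $\mathcal{N}_{\beta,k}(u^{(h)})$ is bounded, and therefore $\mathcal{N}_{\beta,k}(\sigma(u^{(h)}))\le C_0(k,\sigma,h)$ by Lipschitz continuity; the same estimate applied to $h_r\in\mathcal{G}_p(\R^d)$ (uniformly in $r$, since $\Lambda_{h_r}\le\Lambda_h$) keeps $\mathcal{N}_{\beta,k}(u^{(h)}-u^{(h_r)})$ a priori finite, so that the second piece may be absorbed on the left. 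Doing so yields
\begin{equation*}
    \mathcal{N}_{\beta,k}\bigl(u^{(h)}-u^{(h_r)}\bigr)\;\le\; C_1\,\|h\|_{L^2(\mathbb{B}_r^c)},
\end{equation*}
with $C_1=C_1(k,\sigma,h)$.

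Unscrambling the definition of $\mathcal{N}_{\beta,k}$ and raising to the $k$-th power yields
\begin{equation*}
    \sup_{x\in\R^d}\E\bigl(|u^{(h)}(t,x)-u^{(h_r)}(t,x)|^k\bigr)\;\le\; C_1^k\,\e^{k\beta t}\,\Bigl(\int_{\|w\|>r}|h(w)|^2\,\d w\Bigr)^{k/2},
\end{equation*}
so taking $c=\max(C_1^k,k\beta)$ proves the lemma. I expect the only genuinely delicate point to be verifying that every constant and every a priori finiteness claim is uniform in $r$; this is exactly where the pointwise bound $|h_r|\le|h|$ pays off, majorizing every $r$-dependent quantity by its $r=\infty$ counterpart, so that a single $\beta$ works simultaneously for all $r>0$.
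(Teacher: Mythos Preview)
Your proof is correct and follows essentially the same approach as the paper: the same add-and-subtract decomposition into a noise-perturbation piece (handled by Lemma~\ref{lem:localize:noise}) and a solution-perturbation piece (handled by Lemma~\ref{lem:Young}), the same key observation that $|h_r|\le|h|$ yields $\Lambda_{h_r}\le\Lambda_h$ and hence uniformity in $r$, and the same absorption argument after choosing $\beta$ large. The only cosmetic difference is that the paper phrases the choice of $\beta$ via the function $\Lambda_h$ and a parameter $\varepsilon\in(0,1)$ (so the contraction factor is $1-\varepsilon$), whereas you pick $\beta$ directly to make the factor $\le\tfrac12$.
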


\begin{proof}
	Throughout, we choose and fix some $\varepsilon>0$
	and set $\beta$ the same as in \eqref{def:beta}:
	\begin{equation}\label{b}
		\beta \ge 1\vee\Lambda_h\left(\frac{2(1-\varepsilon)^2}{[z_k\text{\rm
		Lip}(\sigma)]^2}\right).
	\end{equation}
	Now, we write for every $t>0$ and $x\in\R^d$,
	\begin{align*}
		\left\| u^{(h)}(t\,,x) - u^{(h_r)}(t\,,x) \right\|_k &=
			\left\| \left( \bm{p}\circledast \sigma( u^{(h)})\eta^{(h)}\right)(t\,,x)
			-  \left(\bm{p}\circledast \sigma( u^{(h_r)})\eta^{(h_r)} \right)(t\,,x) \right\|_k\\
		&\le \mathcal{Q}_1(t\,,x) + \mathcal{Q}_2(t\,,x),
	\end{align*}
	where
	\begin{align*}
		\mathcal{Q}_1(t\,,x) &:= \left\| \left( \bm{p}\circledast \sigma( u^{(h)})\eta^{(h)}\right)(t\,,x)
			- \left( \bm{p}\circledast \sigma( u^{(h)})\eta^{(h_r)}\right)(t\,,x)\right\|_k,\\
		\mathcal{Q}_2(t\,,x) &:= \left\| \left( \bm{p}\circledast \left[\sigma( u^{(h)})
			-\sigma( u^{(h_r)})\right]\eta^{(h_r)}\right)(t\,,x) \right\|_k.
	\end{align*}
	Appeal to Lemma \ref{lem:localize:noise} in order to see that, uniformly for all $t>0$,
	\begin{align*}
		\sup_{x\in\R^d}
			\mathcal{Q}_1(t\,,x) &\le \frac{z_k\e^{\beta t}
			\|h\|_{L^2(\mathbb{B}_r^c)}}{\sqrt{2\beta}}
			\mathcal{N}_{\beta,k}\left(\sigma(u^{(h)})\right)\\
		&\le\frac{z_k\e^{\beta t}}{\varepsilon\sqrt{2\beta}}\left[ \|u_0\|_{L^\infty(\R^d)} +
			\frac{|\sigma(0)|}{\text{\rm Lip}(\sigma)}\right]
			\|h\|_{L^2(\mathbb{B}_r^c)};
	\end{align*}
	see Theorem \ref{th:exist} for the last line.
	
	Next, we may note that
	\begin{equation}\label{LamLam}
		\Lambda_{h_r}(a)\le\Lambda_h(a)
		\qquad\text{for all $a>0$.}
	\end{equation}
	This holds simply because $|h_r(x)|\le|h(x)|$ for all $x\in\R^d$. Among other things,
	it follows from this and \eqref{b} that the same $\beta$ satisfies
	$\beta\ge\Lambda_{h_r}(2(1-\varepsilon)^2/[z_k\text{\rm
	Lip}(\sigma)]^2)$. Thus, we may proceed
	in the same way as in the previous display, and appeal to Lemma \ref{lem:Young} in
	order to see that, for the same $\beta$ as above,
	\begin{align*}
		\mathcal{Q}_2(t\,,x) & \le
			z_k\e^{\beta t}
			\mathcal{N}_{\beta,k}\left( \sigma( u^{(h)})
			-\sigma( u^{(h_r)})\right)\sqrt{\frac12\int_{\R^d} \bm{v}_\beta(x)
		\left(|h_r|*|\tilde{h_r}|\right)(x)\,\d x}\\
		&\le \e^{\beta t}(1 - \varepsilon)
			\mathcal{N}_{\beta,k}\left( u^{(h)} - u^{(h_r)}\right),
	\end{align*}
	uniformly for all $t>0$ and $x\in\R^d$,
	where the second inequality is due to \eqref{E:Lambda-Beta}.
	Combine these bounds with \eqref{b}
	in order to find that
	\begin{align*}
		&\e^{-\beta t}\left\| u^{(h)}(t\,,x) - u^{(h_r)}(t\,,x) \right\|_k \\
		& \hskip1in \le \frac{z_k}{\varepsilon\sqrt{2\beta}}\left[ \|u_0\|_{L^\infty(\R^d)} +
			\frac{|\sigma(0)|}{\text{\rm Lip}(\sigma)}\right]
			\|h\|_{L^2(\mathbb{B}_r^c)} + (1 - \varepsilon)
			\mathcal{N}_{\beta,k}\left( u^{(h)} - u^{(h_r)}\right).
	\end{align*}
	Since the right-hand side does not depend on $(t\,,x)$, we may optimize
	and solve for $\mathcal{N}_{\beta,k}(u^{(h)}-u^{(h_r)})$ in order to see that
	\[
		\mathcal{N}_{\beta,k}( u^{(h)}-u^{(h_r)})
		\le  \frac{z_k}{\varepsilon^2\sqrt{2\beta}}\left[ \|u_0\|_{L^\infty(\R^d)} +
		\frac{|\sigma(0)|}{\text{\rm Lip}(\sigma)}\right]
		\|h\|_{L^2(\mathbb{B}_r^c)},
	\]
	which contains the desired result. As a final remark,
	let us mention that, in order to deduce these facts,
	we need also the \textit{a priori} fact that
	$\mathcal{N}_{\beta,k}(u^{(h)})+\mathcal{N}_{\beta,k}(u^{(h_r)})<\infty$ for
	every $r>0$ and for
	the same $\beta$ as in \eqref{b}. This too follows from Theorem \ref{th:exist}
	and the observation \eqref{LamLam}.
\end{proof}

Next we compare the occupation measure of $u^{(h)}(t)$ to
that of the strongly-localized version $u^{(m,h)}(t)$; see \eqref{eq:u^m,h}
for the notation.

\begin{lemma}\label{lem:g-g:1}
	Choose and fix a real number $T>0$ and an integer $k\ge1$. Then,
	\begin{align*}
		\sup_{N>0}\sup_{t\in[0,T]}
			\E\left(\left| \fint_{[0,N]^d} \prod_{j=1}^k
			g_j\left( u^{(h)}(t\,,x + \zeta^j)\right)\d x -
			\fint_{[0,N]^d} \prod_{j=1}^k g_j
			\left( u_n^{(m,h_r)}(t\,,x + \zeta^j)\right)\d x\right|^2\right)&\\
		\lesssim \int_{\|w\|>r}|h(w)|^2\,\d w
			+\gamma_d\left(m^{d-2}\e^{-m^2/2}\right) +  \e^{-n}&,
	\end{align*}
	uniformly for every $r>0$, $n\in\mathbb{N}$,
	$\zeta^1,\ldots,\zeta^k\in\R^d$,  $m\ge1$, and all
	Lipschitz-continuous functions $g_1,\ldots,g_k:\R\to\R$
	that satisfy \eqref{eq:WLOG}.
\end{lemma}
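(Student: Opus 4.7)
The plan is to interpolate between $u^{(h)}$ and $u_n^{(m,h_r)}$ through two intermediate random fields,
\[
	u^{(h)} \;\rightsquigarrow\; u^{(h_r)} \;\rightsquigarrow\; u^{(m,h_r)} \;\rightsquigarrow\; u_n^{(m,h_r)},
\]
so that the target difference of spatial averages splits into three pieces, each controlled by exactly one of the previously-established lemmas: Lemma \ref{lem:u-u^h_r} handles $u^{(h)}-u^{(h_r)}$ and furnishes the term $\int_{\|w\|>r}|h(w)|^2\,\d w$; Lemma \ref{lem:N(u-um)}, applied with $h$ replaced by $h_r$, handles $u^{(h_r)}-u^{(m,h_r)}$ and furnishes $\sqrt{\gamma_d(m^{d-2}\e^{-m^2/2})}$; and Lemma \ref{lem:u^m,h-u^m,h_n}, applied with $h$ replaced by $h_r$, handles $u^{(m,h_r)}-u_n^{(m,h_r)}$, producing a bound of order $(1-\varepsilon)^n$ that becomes $\e^{-n}$ once $\varepsilon$ is fixed so that $1-\varepsilon\le\e^{-1}$.

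To lift these pointwise bounds on the $u$'s to bounds on the products $\prod_{j=1}^k g_j(u(\cdot+\zeta^j))$, I will use the standard telescope
\[
	\prod_{j=1}^k g_j(a_j) - \prod_{j=1}^k g_j(b_j) = \sum_{j=1}^k \Bigl(\prod_{i<j}g_i(a_i)\Bigr)\bigl[g_j(a_j)-g_j(b_j)\bigr]\Bigl(\prod_{i>j}g_i(b_i)\Bigr).
\]
Since $g_j(0)=0$ and $\text{\rm Lip}(g_j)=1$, both $|g_j(y)|\le|y|$ and $|g_j(y)-g_j(z)|\le|y-z|$ hold. Taking the $L^2(\P)$ norm and applying generalized H\"older with $k$ factors, each in $L^{2k}(\P)$, yields
\[
	\Bigl\|\prod_{j=1}^k g_j(a_j)-\prod_{j=1}^k g_j(b_j)\Bigr\|_2 \le \sum_{j=1}^k \|a_j-b_j\|_{2k}\prod_{i\ne j}\bigl(\|a_i\|_{2k}+\|b_i\|_{2k}\bigr).
\]
The factors $\|a_i\|_{2k}+\|b_i\|_{2k}$ are uniformly bounded in $(t,x)\in[0,T]\times\R^d$ and in $r,m,n$ via Theorem \ref{th:exist}, using that $|h_r|\le|h|$ implies $\Lambda_{h_r}(\cdot)\le\Lambda_h(\cdot)$ (see \eqref{LamLam}), so that the moment bound in Theorem \ref{th:exist} is insensitive to replacing $h$ by $h_r$; similarly, the monotonicity assertion at the end of Lemma \ref{lem:N(u-um)} gives $c_{h_r}\le c_{|h|}$, and $\bar{f}_{h_r}\le\bar{f}_h$ yields $\gamma_d^{(h_r)}\le\gamma_d$. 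After obtaining the pointwise-in-$x$ $L^2(\P)$ bound, I push the $L^2(\P)$ norm inside the spatial average $\fint_{[0,N]^d}$ by Minkowski's inequality, which kills the $N$-dependence; squaring and applying $(X_1+X_2+X_3)^2\le 3(X_1^2+X_2^2+X_3^2)$ combines the three pieces additively into the asserted bound.

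The main obstacle is organizational rather than analytical: the product telescope must be carried out so that the three error sources combine additively in the final estimate (rather than multiplicatively in $k$), and one must check that the moment bounds from Theorem \ref{th:exist} and the three approximation lemmas can all be chosen independently of the parameters $r,m,n$ and locally uniformly in $t\in[0,T]$. Both issues are handled by the pointwise domination $|h_r|\le|h|$ and the resulting monotonicity of $\Lambda_h$, $c_h$, and $\gamma_d$ under the substitution $h\rightsquigarrow h_r$.
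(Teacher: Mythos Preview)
Your proposal is correct and follows essentially the same three-step interpolation $u^{(h)}\to u^{(h_r)}\to u^{(m,h_r)}\to u_n^{(m,h_r)}$ as the paper, invoking the same three lemmas and the same monotonicity observation $\Lambda_{h_r}\le\Lambda_h$ to obtain $r$-independent constants. The only minor technical difference is that you handle the product via a single telescope with all $k$ factors placed in $L^{2k}(\P)$, whereas the paper peels off one factor at a time recursively (so the norm exponent doubles at each step, $L^2\to L^4\to L^8\to\cdots$); your version is slightly cleaner but the two are interchangeable here.
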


\begin{proof}
	By the triangle inequality,
	\[
		\left\| \prod_{j=1}^kg_j\left( u^{(h)}(t\,,x+\zeta^j)\right)-
		\prod_{j=1}^kg_j\left( u^{(h_r)}(t\,,x +\zeta^j)
		\right)\right\|_2 \le Q_1+Q_2,
	\]
	where
	\begin{align*}
		Q_1 &:=\left\| \left\{ g_1\left( u^{(h)}(t\,,x+\zeta^1)\right) -
			g_1\left( u^{(h_r)}(t\,,x+\zeta^1) \right)\right\}
			\prod_{j=2}^k g_j\left( u^{(h)}(t\,,x+\zeta^j)\right)\right\|_2,\\
		Q_2&:=\left\| g\left( u^{(h_r)}(t\,,x+\zeta^1)\right)
			\left\{\prod_{j=2}^k g_j\left( u^{(h)}(t\,,x+\zeta^j)\right)-
			\prod_{j=2}^k g_j\left( u^{(h_r)}(t\,,x +\zeta^j)
			\right) \right\} \right\|_2.
	\end{align*}
	Because of \eqref{eq:WLOG}, $|g_j(z)|\le|z|$ for all $z\in\R$ and $j=1,\ldots,k$,
	and hence
	\begin{align*}
		Q_1 &\le \left\| \left| u^{(h)}(t\,,x+\zeta^1) -
			u^{(h_r)}(t\,,x+\zeta^1) \right|\cdot
			\prod_{j=2}^k \left| u^{(h)}(t\,,x+\zeta^j)\right|\right\|_2\\
		&\le \left\|  u^{(h)}(t\,,x+\zeta^1) -
			u^{(h_r)}(t\,,x+\zeta^1) \right\|_{2k}\cdot\prod_{j=2}^k
			\left\| u^{(h)}(t\,,x+\zeta^j) \right\|_{2k}.
	\end{align*}
	We have also used H\"older's inequality in the following form:
	$\|X_1\cdots X_k\|_2\le \prod_{j=1}^k\|X_j\|_{2k}$ for every
	$X_1,\ldots, X_k\in L^{2k}(\P)$. Thanks to
	Theorem \ref{th:exist} and Lemma \ref{lem:u-u^h_r}, we can find a positive number
	$A_1=A_1(\sigma\,,h\,,k)$ such that
	\begin{equation}\label{Q:1}
		Q_1 \le  A_1\e^{A_1t}\|h\|_{L^2(\mathbb{B}_r^c)},
	\end{equation}
	uniformly for all $t,r>0$.
	
	Similarly, there exists a positive number
	$A_2=A_2(\sigma\,,h\,,k)$
	\begin{align*}
		Q_2 &\le\left\| u^{(h_r)}(t\,,x+\zeta^1)\right\|_4
			\left\| \prod_{j=2}^k g_j\left( u^{(h)}(t\,,x+\zeta^j)\right)-
			\prod_{j=2}^k g_j\left( u^{(h_r)}(t\,,x +\zeta^j)
			\right)  \right\|_4\\
		&\le A_2\e^{A_2t}
			\left\| \prod_{j=2}^k g_j\left( u^{(h)}(t\,,x+\zeta^j)\right)-
			\prod_{j=2}^k g_j\left( u^{(h_r)}(t\,,x +\zeta^j)
			\right)  \right\|_4.
	\end{align*}
	We can combine the bounds for $Q_1$ and $Q_2$ in order to find that
	there exists a positive number $B_1=B_1(\sigma\,,h\,,k)$ such that
	\begin{align*}
		&\left\| \prod_{j=1}^kg_j\left( u^{(h)}(t\,,x+\zeta^j)\right)-
			\prod_{j=1}^kg_j\left( u^{(h_r)}(t\,,x +\zeta^j)
			\right)\right\|_2\\
		&\le B_1\e^{B_1t}\left(\|h\|_{L^2(\mathbb{B}_r^c)} +
			\left\| \prod_{j=2}^k g_j\left( u^{(h)}(t\,,x+\zeta^j)\right)-
			\prod_{j=2}^k g_j\left( u^{(h_r)}(t\,,x +\zeta^j)
			\right)  \right\|_4\right).
	\end{align*}
	We repeat the method once more in order to see that
	there exists a positive number $B_1'=B_1'(\sigma\,,h\,,k)$ such that
	\begin{align*}
		&\left\| \prod_{j=1}^kg_j\left( u^{(h)}(t\,,x+\zeta^j)\right)-
			\prod_{j=1}^kg_j\left( u^{(h_r)}(t\,,x +\zeta^j)
			\right)\right\|_2\\
		&\le B_1\e^{B_1t}\left(\|h\|_{L^2(\mathbb{B}_r^c)} +
			B_1'\e^{B_1't}\left\{ \|h\|_{L^2(\mathbb{B}_r^c)}+
			\left\| \prod_{j=3}^k g_j\left( u^{(h)}(t\,,x+\zeta^j)\right)-
			\prod_{j=3}^k g_j\left( u^{(h_r)}(t\,,x +\zeta^j)
			\right)  \right\|_8 \right\}\right)\\
		&\le B_2\e^{B_2t} \left(\|h\|_{L^2(\mathbb{B}_r^c)} +
			\left\| \prod_{j=3}^k g_j\left( u^{(h)}(t\,,x+\zeta^j)\right)-
			\prod_{j=3}^k g_j\left( u^{(h_r)}(t\,,x +\zeta^j)
			\right)  \right\|_8 \right),
	\end{align*}
	where $B_2:= B_1 +B_1' + B_1B_1'$, say. Continue repeating the method to see that
	there exists a real number $A=A(\sigma\,,h\,,k)>0$ such that
	\[
		\left\| \prod_{j=1}^kg_j\left( u^{(h)}(t\,,x+\zeta^j)\right)-
		\prod_{j=1}^kg_j\left( u^{(h_r)}(t\,,x +\zeta^j)
		\right)\right\|_2 \le A\e^{At} \|h\|_{L^2(\mathbb{B}_r^c)},
	\]
	whence also
	\begin{equation}\label{I1}
		\left\| \fint_{[0,N]^d} \prod_{j=1}^kg_j\left( u^{(h)}(t\,,x+\zeta^j)\right)\d x -
		\fint_{[0,N]^d} \prod_{j=1}^kg_j\left( u^{(h_r)}(t\,,x +\zeta^j)
		\right)\d x\right\|_2
		\le B\e^{Bt} \|h\|_{L^2(\mathbb{B}_r^c)},
	\end{equation}
	simultaneously for all $t,r,N>0$, and $x,\zeta^1,\ldots,\zeta^k\in\R^d$.
	
	Since $h\in\mathcal{G}_p(\R^d)$
	implies that $h_r\in\mathcal{G}_p(\R^d)$ for all $r>0$,
	we may argue similarly as we did above, but appeal to Lemma \ref{lem:N(u-um)} (with $h_r$
	in place of $h$) instead
	of Lemma \ref{lem:u-u^h_r}, in order to see that
	there exists a real number $B'=B'(\sigma\,,d\,,h\,,k)>0$ such that
	\begin{equation}\label{I2}\begin{split}
		\left\| \fint_{[0,N]^d} \prod_{j=1}^k
			g_j\left( u^{(h_r)}(t\,,x+\zeta^j)\right)\d x -
			\fint_{[0,N]^d} \prod_{j=1}^k g_j
			\left( u^{(m,h_r)}(t\,,x + \zeta^j)\right)\d x\right\|_2&\\
		\le B' \e^{B't}
			\sqrt{ \gamma_d\left(m^{d-2}\e^{-m^2/2}\right)}&,
	\end{split}\end{equation}
	simultaneously for all $t,r,N>0$, $m\ge1$, and $x,
	\zeta^1,\ldots,\zeta^k\in\R^d$. The validity of this fact
	hinges also on
	the observation that: (1) The constants $c_h$ and $c_{h_r}$
	in Lemma \ref{lem:N(u-um)} satisfy $c_{h_r}\le c_h$. Thus,
	the real numbers $A,A_1,A_2,B_1,B_1',B_2$
	indeed do not depend on the parameter $r$; and
	(2) Eq.\ \eqref{LamLam} ensures that the $\beta$ of
	Lemma \ref{lem:N(u-um)} can be selected independently of $r>0$.
	
	Recall the Picard-iteration approximation $u_n^{(m,h_r)}$ to $u^{(m,h_r)}$
	from the proof of Lemma \ref{lem:u^m,h}. In like manner to the above, we find that
	there exists $B''=B''(\sigma\,,d\,,h\,,k)>0$ such that
	\begin{equation}\label{I3}\begin{split}
		\left\| \fint_{[0,N]^d} \prod_{j=1}^k g_j
			\left( u^{(m,h_r)}(t\,,x + \zeta^j)\right)\d x -
			\fint_{[0,N]^d} \prod_{j=1}^k g_j
			\left( u_n^{(m,h_r)}(t\,,x + \zeta^j)\right)\d x\right\|_2&\\
		\le B'' \e^{B'' t}&\e^{-n/2},
	\end{split}\end{equation}
	thanks to Lemma \ref{lem:u^m,h-u^m,h_n} [with $\varepsilon = 1 -\e^{-1/2}$].
	The preceding holds simultaneously for
	all $t,r,N>0$, $x\in\R^d$, and $n\in\mathbb{Z}_+$. And we emphasize once
	again that $\beta$ can be selected independently of $r>0$ because of our
	earlier observation \eqref{LamLam}.
	The lemma follows upon combining \eqref{I1}, \eqref{I2}, and \eqref{I3}.
\end{proof}

Armed with the preceding, we may compare the variance quantity in the
Poincar\'e inequality for the occupation measure of
$u^{(h)}(t)$ (Theorem \ref{th:Poincare}) to one
for a very strongly-localized version $u_n^{(m,h_r)}$.

\begin{lemma}\label{lem:g-g:2}
	Choose and fix a real number $T>0$ and an integer $k\ge1$. Then,
	\begin{align*}
		&\sup_{N>0}\sup_{t\in[0,T]}
			\left| \Var\left(\fint_{[0,N]^d} \prod_{j=1}^k
			g_j\left( u^{(h)}(t\,,x + \zeta^j)\right)
			\d x \right) - \Var\left(
			\fint_{[0,N]^d} \prod_{j=1}^k g_j\left( u_n^{(m,h_r)}
			(t\,,x+ \zeta^j)\right)\d x\right)\right|\\
		&\hskip3.6in\lesssim \|h\|_{L^2(\mathbb{B}_r^c)}
			+\sqrt{\gamma_d\left(m^{d-2}\e^{-m^2/2}\right)} + \e^{-n/2},
	\end{align*}
	uniformly for every Lipschitz-continuous function $g_1,\ldots,g_k:\R\to\R$
	that satisfy \eqref{eq:WLOG},
	all $T,r>0$,  $m\ge 1$,  $n\in\mathbb{N}$, and $\zeta^1,\ldots,\zeta^k\in\R^d$.
\end{lemma}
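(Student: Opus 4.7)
The plan is to reduce the variance comparison to the $L^2(\P)$ comparison that Lemma \ref{lem:g-g:1} has already established. Write
\[
	A_1(N,t) := \fint_{[0,N]^d}\prod_{j=1}^k g_j\!\left(u^{(h)}(t\,,x+\zeta^j)\right)\d x,
	\qquad
	A_2(N,t) := \fint_{[0,N]^d}\prod_{j=1}^k g_j\!\left(u_n^{(m,h_r)}(t\,,x+\zeta^j)\right)\d x,
\]
and recall the elementary polarization identity
\[
	\Var(X)-\Var(Y) = \E\!\left[(X-Y)(X+Y)\right] - \E(X-Y)\,\E(X+Y),
\]
which, after two applications of the Cauchy--Schwarz inequality together with $|\E Z|\le\|Z\|_2$, yields
\[
	\left|\Var(X)-\Var(Y)\right| \le 2\|X-Y\|_2\!\left(\|X\|_2 + \|Y\|_2\right)
	\qquad\text{for every $X,Y\in L^2(\P)$.}
\]
Applying this with $X = A_1(N,t)$ and $Y = A_2(N,t)$ reduces the lemma to two ingredients: (i) an $L^2$-difference bound on $A_1-A_2$; and (ii) a uniform $L^2$-bound on each of $A_1$ and $A_2$.

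For ingredient (i), I would quote Lemma \ref{lem:g-g:1} directly and then use the subadditivity $\sqrt{a+b+c}\le\sqrt a+\sqrt b+\sqrt c$ to obtain
\[
	\sup_{N>0}\sup_{t\in[0,T]}\|A_1(N,t)-A_2(N,t)\|_{L^2(\P)}
	\ \lesssim\ \|h\|_{L^2(\mathbb{B}_r^c)} + \sqrt{\gamma_d\!\left(m^{d-2}\e^{-m^2/2}\right)} + \e^{-n/2},
\]
with implicit constant depending only on $T,\sigma,h,d,k$.

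For ingredient (ii), I would exploit the assumption \eqref{eq:WLOG} that $g_j(0)=0$ and $\text{\rm Lip}(g_j)=1$, which gives $|g_j(z)|\le|z|$ for all $z\in\R$. Hence $|A_i(N,t)|\le\fint_{[0,N]^d}\prod_{j=1}^k|U_i(t\,,x+\zeta^j)|\,\d x$, where $U_1=u^{(h)}$ and $U_2=u_n^{(m,h_r)}$. By Minkowski together with the generalized H\"older inequality $\|Z_1\cdots Z_k\|_2\le\prod_j\|Z_j\|_{2k}$ and the spatial translation invariance baked into the supremum, this yields
\[
	\|A_i(N,t)\|_{L^2(\P)}\ \le\ \sup_{x\in\R^d}\|U_i(t\,,x)\|_{L^{2k}(\P)}^{\,k}.
\]
For $U_1=u^{(h)}$, Theorem \ref{th:exist} gives a bound depending only on $T,\sigma,h,k$. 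For $U_2=u_n^{(m,h_r)}$, the same $\mathcal{N}_{\beta,k}$-estimate from the proof of Lemma \ref{lem:u^m,h} applies, and the crucial monotonicity \eqref{LamLam}, namely $\Lambda_{h_r}(\cdot)\le\Lambda_h(\cdot)$, allows one to choose the exponential weight $\beta$ independently of $r>0$; by construction the iteration bound \eqref{N(u^m,h)} is likewise independent of $m\ge1$ and $n\in\mathbb{N}$.

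Combining the polarization identity with the two bounds completes the proof. The only real technical point --- the ``main obstacle'' such as it is --- is the uniformity claim in (ii) across $(m,n,r)$; this is exactly why the monotonicity principle \eqref{LamLam} was isolated earlier and why the strongly localized process $u_n^{(m,h_r)}$ was defined by a Picard iteration whose contraction constant is controlled by $\Lambda_h$ rather than $\Lambda_{h_r}$. Once this uniformity is in hand, everything else reduces to the $L^2$-bound of Lemma \ref{lem:g-g:1} and the elementary variance inequality above.
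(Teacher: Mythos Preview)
Your proposal is correct and follows essentially the same route as the paper's proof. The paper splits $\Var(\mathcal{X})-\Var(\mathcal{Y})$ into $\E(\mathcal{X}^2)-\E(\mathcal{Y}^2)$ and $(\E\mathcal{X})^2-(\E\mathcal{Y})^2$ and bounds each separately, while you package the same two pieces into the single polarization inequality $|\Var(X)-\Var(Y)|\le 2\|X-Y\|_2(\|X\|_2+\|Y\|_2)$; in both cases the inputs are exactly Lemma~\ref{lem:g-g:1} for $\|A_1-A_2\|_2$ and the uniform moment bounds on $u^{(h)}$ and $u_n^{(m,h_r)}$ (via Theorem~\ref{th:exist}, \eqref{N(u^m,h)}, and the monotonicity \eqref{LamLam}).
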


\begin{proof}
	In order to simplify the exposition, we will write, in short hand,
	\[
		\mathcal{X} := \fint_{[0,N]^d}
			\prod_{j=1}^k  g_j\left( u^{(h)}(t\,,x + \zeta^j)\right)\d x,\quad
		\mathcal{Y} :=  \fint_{[0,N]^d} \prod_{j=1}^k
			g_j\left( u_n^{(m,h_r)}(t\,,x + \zeta^j)\right)\d x,
	\]
	and suppress the various parameters of the lemma for the time being.
	
	We apply
	Lemma \ref{lem:g-g:1} in order to see that
	\[
		\left| (\E \mathcal{X})^2-(\E \mathcal{Y})^2\right|
		\lesssim\left[ \|h\|_{L^2(\mathbb{B}_r^c)}
		+\sqrt{\gamma_d\left(m^{d-2}\e^{-m^2/2}\right)} + \e^{-n/2}\right]
		\left( |\E \mathcal{X}| + |\E \mathcal{Y}|\right),
	\]
	where the implied constant does not depend on $g_1,\ldots,g_k$
	(except via \eqref{eq:WLOG});
	neither does it depend on $t\in[0\,,T]$, $r,N>0$,  $m\ge 1$, nor
	$\zeta^1,\ldots,\zeta^k\in\R^d$.
	Because of \eqref{eq:WLOG}, $|g_j(z)|\le|z|$ for all $j=1,\ldots,k$ and $z\in\R$,
	and hence Jensen's inequality yields
	\[
		|\E \mathcal{X}| \le \sup_{x\in\R^d}\left\| \prod_{j=1}^k\left|
		u^{(h)}(t\,,x + \zeta^j)\right| \right\|_2
		\le \sup_{x\in\R^d}\left\| u^{(h)}(t\,,x)\right\|_{2k}^k,
	\]
	which is bounded uniformly
	for all $t\in[0\,,T]$ from above by a constant $c=c(T\,,\sigma\,,h\,,k)$;
	see Theorem \ref{th:exist}. Because of our observation
	\eqref{LamLam}, Lemma \ref{lem:u^m,h} ensures that
	$|\E \mathcal{Y}|\le c$,
	uniformly for all $r>0$ and $n,m\ge1$, and for the same real number $c$.
	See also \eqref{N:beta(u^m,h)}. Thus,
	\[
		\left| (\E \mathcal{X})^2-(\E \mathcal{Y})^2\right|
		\lesssim \|h\|_{L^2(\mathbb{B}_r^c)}
		+ \sqrt{\gamma_d\left(m^{d-2}\e^{-m^2/2}\right)}  + \e^{-n/2},
	\]
	where the implied constant does not depend on $g_1,\ldots,g_k$
	(except via \eqref{eq:WLOG});
	neither does it depend on $t\in[0\,,T]$, $r,N>0$, $n,m\ge 1$, nor
	$\zeta^1,\ldots,\zeta^k\in\R^d$.
	
	Next, we appeal to Lemma \ref{lem:g-g:1}
	in order to see that, for the same constant $c$ as above,
	\begin{align*}
		\left| \E(\mathcal{X}^2) - \E(\mathcal{Y}^2)\right|
			&\le\|\mathcal{X}-\mathcal{Y}\|_2\|\mathcal{X}+\mathcal{Y}\|_2
			\le 2c\|\mathcal{X}-\mathcal{Y}\|_2\\
		&\lesssim\|h\|_{L^2(\mathbb{B}_r^c)}
			+ \sqrt{\gamma_d\left(m^{d-2}\e^{-m^2/2}\right)} + \e^{-n/2},
	\end{align*}
	with the same sort of parameter-independence properties as in the
	previous display.	
	The lemma follows from summing the previous two displays.
\end{proof}

Finally, we estimate the variance of the occupation measure of the
strongly-localized random field $u_n^{(m,h_r)}$.

\begin{lemma}\label{lem:g-g:3}
	Choose and fix a real number $T>0$ and an integer $k\ge1$. Then,
	\begin{align*}
		\sup_{t\in[0,T]}
		\left| \Var\left(
		\fint_{[0,N]^d} \prod_{l=1}^k
		g_l\left( u_n^{(m,h_r)}(t\,,x + \zeta^l)\right)\d x\right)\right|
		\lesssim \frac{n^d(r^d+m^d) + \max_{1\le j\le k}\|\zeta^j\|^d}{N^d},
	\end{align*}
	uniformly for all real numbers $N,n,r,m\ge1$ and all Lipschitz-continuous functions
	$g_1,\ldots,g_k:\R\to\R$ that satisfy \eqref{eq:WLOG}.
\end{lemma}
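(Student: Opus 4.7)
The plan is to exploit the strong localization of $u_n^{(m,h_r)}$ in the underlying space-time white noise in order to reduce the double integral defining the variance to an integration over a thin diagonal strip. Recall that, via \eqref{eta^h}, the noise $\eta^{(h_r)}$ is obtained by convolving an underlying space-time white noise $\xi$ with $h_r$, and that $h_r$ is supported in $\mathbb{B}_r$ by definition \eqref{h:_r}.

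\textbf{Step 1 (Support of dependence).} I would first show by induction on $n$ that $u_n^{(m,h_r)}(t,x)$ is measurable with respect to the restriction of $\xi$ to the cylinder $(0,t)\times\mathbb{B}_{\rho_n(t)}(x)$, where $\rho_n(t):=n(m\sqrt{t}+r)$. The base case $u_0^{(m,h_r)}\equiv 1$ is trivial. For the inductive step, the Picard recursion
\[
u_{n+1}^{(m,h_r)}(t,x) = 1+\int_{(0,t)\times\mathbb{B}_{m\sqrt t}(x)}\bm{p}_{t-s}(x-y)\sigma\bigl(u_n^{(m,h_r)}(s,y)\bigr)\,\eta^{(h_r)}(\d s\,\d y)
\]
uses $\xi$ at points $(s,z)$ with $s<t$ and $z\in y-\mathrm{supp}(h_r)\subset \mathbb{B}_{r}(y)$ for some $y\in\mathbb{B}_{m\sqrt t}(x)$, together with the integrand $u_n^{(m,h_r)}(s,y)$, which by induction depends only on $\xi$ on $(0,s)\times\mathbb{B}_{\rho_n(s)}(y)$. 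Since $\rho_n(\cdot)$ is nondecreasing, the combined radius is at most $m\sqrt t + r + \rho_n(t)=\rho_{n+1}(t)$.

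\textbf{Step 2 (Independence at a distance).} Set $\Phi(x):=\prod_{l=1}^k g_l\bigl(u_n^{(m,h_r)}(t,x+\zeta^l)\bigr)$ and $\rho := \rho_n(T)+\max_l\|\zeta^l\|$. By Step 1, $\Phi(x)$ is measurable with respect to $\xi$ on $(0,T)\times\mathbb{B}_\rho(x)$. Since $\xi$ has independent increments on disjoint Borel sets, it follows that $\Phi(x)$ and $\Phi(y)$ are independent, and hence $\mathrm{Cov}(\Phi(x),\Phi(y))=0$, whenever $\|x-y\|>2\rho$.

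\textbf{Step 3 (Moment bound and variance estimate).} Because $g_l(0)=0$ and $\mathrm{Lip}(g_l)=1$, we have $|g_l(z)|\le|z|$, so $|\Phi(x)|\le\prod_l|u_n^{(m,h_r)}(t,x+\zeta^l)|$. Combining H\"older's inequality with the uniform-in-$n$ moment bound for the Picard iterates --- established in the proofs of Theorem \ref{th:exist} and Lemma \ref{lem:u^m,h}, and noting via \eqref{LamLam} that the corresponding $\beta$ can be chosen independently of $r$ --- gives $\sup_{x\in\R^d}\sup_{t\in[0,T]}\E(\Phi(x)^2)\le C$ for some $C=C(T,\sigma,h,k)$. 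By Cauchy--Schwarz, $|\mathrm{Cov}(\Phi(x),\Phi(y))|\le C$ uniformly in $x,y$ and $t\in[0,T]$. Therefore
\[
\Var\!\left(\fint_{[0,N]^d}\Phi(x)\,\d x\right) = \frac{1}{N^{2d}}\iint_{[0,N]^{2d}}\mathrm{Cov}(\Phi(x),\Phi(y))\,\d x\,\d y \le \frac{C\,|\mathbb{B}_{2\rho}|}{N^d}\lesssim \frac{\rho^d}{N^d}.
\]
Finally, $\rho^d \le 2^{d-1}\bigl(n^d(m\sqrt T+r)^d + \max_l\|\zeta^l\|^d\bigr)\lesssim n^d(m^d+r^d)+\max_l\|\zeta^l\|^d$ with the $T$-dependence absorbed into the implicit constant, giving the stated bound.

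The only mildly delicate point is Step 1, which requires translating ``measurable with respect to $\xi$ on a cylinder'' into Walsh's framework and verifying that the Picard iterates constructed via Walsh integration honor the claimed spatial support; everything else is a routine independence-at-a-distance argument.
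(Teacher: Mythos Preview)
Your proof is correct and follows essentially the same approach as the paper: both arguments establish that $u_n^{(m,h_r)}(t,x)$ depends only on the white noise $\xi$ restricted to a ball of radius $\sim n(m\sqrt t+r)$ around $x$, deduce independence of $\Phi(x)$ and $\Phi(y)$ beyond distance $2\rho$, and then bound the variance by the volume of the diagonal strip $\{\|x-y\|\le 2\rho\}$. The only cosmetic difference is that the paper first discretizes $[0,N]^d$ into unit subcubes and counts lattice points, whereas you work directly with the continuous double integral; your route is slightly cleaner but otherwise identical in substance.
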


\begin{proof}
	Since $\eta^{( h_r)}(\d s\,\d x) := \d x\,\int_{\mathbb{B}_r(x)} h (x-y)\,\xi(\d s\,\d y),$
	it follows readily from the properties of the Wiener integral that, for every $\varphi\in C_c(\R^d)$
	and $x_1\ldots,x_k\in\R^d$,
	\[
		\left\{ \int_{(0,t)\times\mathbb{B}_{m\sqrt t}(x_i)}
		\varphi(x_i-y)\,\eta^{(h_r)}(\d s\,\d y)\right\}_{i=1}^k
		\qquad\text{are independent,}
	\]
	provided that $\|x_i-x_j\|>2(r+ m\sqrt t)$ whenever $i\neq j$.
	Since $u_0\equiv1$ and
	\[
		u_{n+1}^{(m,h_r)}(t\,,x) = 1 + \int_{(0,t)\times\mathbb{B}_{m\sqrt t}(x)}
		\bm{p}_{t-s}(x-y)\sigma\left( u_n^{(m,h_r)}(s\,,y)\right)\eta^{(h_r)}(\d s\,\d y),
	\]
	for every $n\ge1$, it follows from induction on $n$, and from the properties of the
	Walsh stochastic integral, that
	\begin{equation}\label{ind}
		\left\{ u_n^{(m,h_r)}(t\,,x_i)\right\}_{i=1}^k
		\quad\text{are independent,}
	\end{equation}
	provided that $\|x_i-x_j\|>2n(r+ m\sqrt t)$ whenever $i\neq j$. For more details, see
	Lemma 5.4 of Conus et al \cite{CJKS2013}.
	
	For all $j\in\mathbb{Z}^d_+$ define
	\[
		I(j) = I(j\,,N) := \left( \prod_{k=1}^d[ j_k\,, j_k+1) \right)\cap [0\,,N]^d
	\]
	and
	\[
		\mathcal{S} := \int_{[0,N]^d} \prod_{l=1}^k g_l
		\left( u_n^{(m,h_r)}(t\,,x + \zeta^l)\right)\d x
		\quad\text{and}\quad
		\mathcal{Y}_j := \int_{I(j)} \prod_{l=1}^k g_l
		\left( u_n^{(m,h_r)}(t\,,x + \zeta^l)\right)\d x.
	\]
	We can then write
	\[
		 \mathcal{S} = \sum_{j\in\mathbb{Z}^d_+:\,0\le |j|\le N} \mathcal{Y}_j
		 \qquad\text{
		where $|j| := \max_{1\le k\le d}|j_k|$ for all $j\in\mathbb{Z}^d_+$.}
	\]
	Thus, we define
	\begin{align}\label{E:L}
		L := 2\max_{1\le j\le k}\|\zeta^j\| + 2\sqrt{d},
	\end{align}
	and write
	\begin{align*}
		\E(\mathcal{S}^2) &= \mathop{\sum\sum}\limits_{\substack{%
			i,j\in\mathbb{Z}^d_+:\,0\le |i|, |j|\le N\\
			\|i-j\|\le 2n(r+m\sqrt t)+L}}\E(\mathcal{Y}_i\mathcal{Y}_j)
			+ \mathop{\sum\sum}\limits_{\substack{%
			i,j\in\mathbb{Z}^d_+:\,0\le |i|, |j|\le N\\
			\|i-j\|> 2n(r+m\sqrt t)+L}}\E(\mathcal{Y}_i)\E(\mathcal{Y}_j)\\
		&\le 2\mathop{\sum\sum}\limits_{\substack{%
			i,j\in\mathbb{Z}^d_+:\,0\le |i|, |j|\le N\\
			\|i-j\|\le 2n(r+m\sqrt t)+L}}\|\mathcal{Y}_i\|_2\|\mathcal{Y}_j\|_2 +
			\left| \E(\mathcal{S})\right|^2,
	\end{align*}
	thanks to \eqref{ind} and the Cauchy--Schwarz inequality.
	Therefore,  uniformly for all $t\in[0\,,T]$,
	\begin{equation}\label{VAR(S)}\begin{split}
		\Var(\mathcal{S}) &\le 2\max_{0\le |j|\le N}\E\left(|\mathcal{Y}_j|^2\right)
			\: N^d\: \left|\left\{ i\in\mathbb{Z}^d_+:\, 0\le |i|\le N\wedge
			\left(2n(r+m\sqrt t) +L\right)\right\}\right|\\
		&\lesssim \max_{0\le |j|\le N}\E\left(|\mathcal{Y}_j|^2\right)
			\: N^d \: \left\{ n^d\left( r^d + m^d\right) + L^d\right\},
	\end{split}\end{equation}
	where the implied constant depends only on $(d\,,T)$, provided additionally that
	$L+2n(r+m\sqrt T)\ge1$. This condition certainly holds for all $n,m,r\ge1$,
	since $L\ge1$.
	Because of \eqref{eq:WLOG}, $|g_l(w)|\le |w|$ for all $l=1,\ldots,k$ and $w\in\R$. Therefore,
	elementary considerations now yield the following:
	\begin{align*}
		\sup_{j\in\mathbb{Z}^d_+}\E\left(|\mathcal{Y}_j|^2\right)
			&=\sup_{j\in\mathbb{Z}^d_+}  \int_{I(j)}\d x\int_{I(j)}\d y\
			\E\left[\prod_{l=1}^k g_l
			\left( u_n^{(m,h_r)}(t\,,x + \zeta^l)\right)
			\prod_{l'=1}^k g_{l'}\left( u_n^{(m,h_r)}(t\,,y + \zeta^{l'})\right)\right]\\
		&\le\sup_{x\in\R^d}
			\E\left(\left| u_n^{(m,h_r)}(t\,,x)\right|^{2k}\right),
	\end{align*}
	provided additionally that $N\ge1$. Therefore, \eqref{N:beta(u^m,h)} implies that
	\[
		\sup_{t\in[0,T]}\sup_{j\in\mathbb{Z}^d_+}\E\left(|\mathcal{Y}_j|^2\right)<\infty,
	\]
	which has the desired result, thanks to \eqref{VAR(S)}.
\end{proof}

Finally, we can present the main result of this section.

\begin{theorem}\label{th:full}
	Choose and fix an arbitrary real number $T>0$ and an integer $k\ge1$. Then,
	\begin{align*}
		\sup_{t\in[0,T]} &\Var\left(\fint_{[0,N]^d} \prod_{l=1}^k g_l(u(t\,,x
			+ \zeta^l))\,\d x\right)\\
		&\hskip1.7in\lesssim \inf_{m\ge 1}\left[
			\tau_d\left( \frac{m^{3d}+\max_{1\le j\le k}\|\zeta^j\|^d}{N^d}\right)
			+\sqrt{\gamma_d\left(m^{d-2}\e^{-m^2/2}\right)}\right],
	\end{align*}
	where the implied constant depends neither on
	$\zeta^1,\ldots,\zeta^k\in\R^d$ nor on $N\ge1$,
	and where $\gamma_d$ and $\tau_d$  were  defined respectively in
	\eqref{gamma_d} and \eqref{tau_d}.
	In particular,
	\begin{equation}\label{eq:Var->0}
		\lim_{N\to\infty}\Var\left(\fint_{[0,N]^d} \prod_{l=1}^k g_l(u(t\,,x
		+ \zeta^l))\,\d x\right)=0.
	\end{equation}
\end{theorem}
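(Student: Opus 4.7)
The plan is to combine Lemma \ref{lem:g-g:1} (which controls the $L^2$-distance between the occupation averages of $u$ and the strongly localized process $u_n^{(m,h_r)}$) with Lemma \ref{lem:g-g:3} (which bounds the variance of the latter), and then to optimize over the three free parameters $m\ge 1$, $r>1$, and $n\in\mathbb{Z}_+$. A key preliminary observation is that I will \emph{not} invoke Lemma \ref{lem:g-g:2} directly: the error appearing there involves the unsquared norm $\|h\|_{L^2(\mathbb{B}_r^c)}$, whereas the gauge $\tau_d$ from \eqref{tau_d} is built out of $\|h\|_{L^2(\mathbb{B}_r^c)}^2$. Instead, since $X\mapsto\sqrt{\Var(X)}$ is a seminorm on $L^2(\P)$, the triangle inequality yields $\sqrt{\Var(X)}\le\|X-Y\|_2+\sqrt{\Var(Y)}$, and squaring gives $\Var(X)\lesssim\Var(Y)+\|X-Y\|_2^2$. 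Applied with $X$ and $Y$ the two occupation averages in Lemma \ref{lem:g-g:1} (with $Y$ the one built from $u_n^{(m,h_r)}$), the two lemmas together produce, for every $m\ge 1$, $r>1$, and $n\in\mathbb{Z}_+$,
\[
\sup_{t\in[0,T]}\Var(X)\lesssim\frac{n^d(r^d+m^d)+\max_{1\le j\le k}\|\zeta^j\|^d}{N^d}+\|h\|_{L^2(\mathbb{B}_r^c)}^2+\gamma_d\!\left(m^{d-2}\e^{-m^2/2}\right)+\e^{-n}.
\]

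The parameter optimization proceeds by coupling $n$ to $m$ via $n:=\lceil m^2\rceil$, so that $\e^{-n}\le\e^{-m^2}$ and $n^d\lesssim m^{2d}$. Since $m,r\ge 1$, the elementary bounds $m^{2d}\le m^{3d}$ and $r^d\ge 1$ give $m^{2d}(r^d+m^d)+\max_{1\le j\le k}\|\zeta^j\|^d\le 2(m^{3d}+\max_{1\le j\le k}\|\zeta^j\|^d)\,r^d$. Writing $a:=(m^{3d}+\max_{1\le j\le k}\|\zeta^j\|^d)/N^d$, the first two error terms collapse after taking the infimum over $r>1$ to $\inf_{r>1}\bigl[2ar^d+\|h\|_{L^2(\mathbb{B}_r^c)}^2\bigr]=\tau_d(2a)\le 2\tau_d(a)$, where the last inequality is the multiplicativity property of Lemma \ref{lem:tau_d}. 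This yields
\[
\sup_{t\in[0,T]}\Var(X)\lesssim\tau_d(a)+\gamma_d\!\left(m^{d-2}\e^{-m^2/2}\right)+\e^{-m^2}.
\]

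It remains to dominate the last two terms by $\sqrt{\gamma_d(m^{d-2}\e^{-m^2/2})}$, uniformly in $m\ge 1$. For $m$ sufficiently large, the minoration $\gamma_d(t)\gtrsim t$ for small $t$ (Lemma \ref{lem:gamma_d}) gives $\sqrt{\gamma_d(m^{d-2}\e^{-m^2/2})}\gtrsim m^{(d-2)/2}\e^{-m^2/4}$, which dominates both $\gamma_d(m^{d-2}\e^{-m^2/2})$ (itself tending to $0$) and $\e^{-m^2}$. For $m$ in the remaining compact range $[1,M_0]$, the a priori estimate $\sup_{t\in[0,T]}\Var(X)\lesssim 1$ coming from the moment bounds of Theorem \ref{th:exist} and the normalization \eqref{eq:WLOG}, combined with the strict positivity of $\sqrt{\gamma_d(m^{d-2}\e^{-m^2/2})}$ on $[1,M_0]$, supplies the same inequality with a larger but still uniform constant. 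Taking the infimum over $m\ge 1$ delivers the main inequality. The assertion \eqref{eq:Var->0} then follows by first holding $m$ fixed and sending $N\to\infty$ (using $\lim_{a\to 0^+}\tau_d(a)=0$, Lemma \ref{lem:tau_d}), and then letting $m\to\infty$ (using $\lim_{t\to 0^+}\gamma_d(t)=0$, Lemma \ref{lem:gamma_d}).

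The principal technical obstacle is the very first step: one must resist the temptation to apply Lemma \ref{lem:g-g:2} and instead work with the squared $L^2$-error of Lemma \ref{lem:g-g:1}, so that the spatial-tail term $\|h\|_{L^2(\mathbb{B}_r^c)}^2$ can be matched to the definition of $\tau_d$. The coupling $n=\lceil m^2\rceil$ is the other non-obvious choice: it calibrates the Picard-iteration decay $\e^{-n}$ to the strong-localization scale $m^{d-2}\e^{-m^2/2}$ while keeping the combinatorial prefactor $n^d(r^d+m^d)$ polynomial of size $\asymp m^{3d}r^d$, which is precisely the argument of $\tau_d$ in the final bound.
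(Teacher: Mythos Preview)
Your proof is correct and, in one respect, cleaner than the paper's own argument. The paper combines Lemma~\ref{lem:g-g:2} with Lemma~\ref{lem:g-g:3}, obtaining an error term of the form $\|h\|_{L^2(\mathbb{B}_r^c)}$ (unsquared), and then ``optimizes over $r>1$'' to produce $\tau_d(n^d/N^d)$. As you observed, this step is not justified as written: $\tau_d$ is defined with $\|h\|_{L^2(\mathbb{B}_r^c)}^2$, and there is no uniform inequality $\inf_{r>1}[ar^d+\|h\|_{L^2(\mathbb{B}_r^c)}]\lesssim\tau_d(a)$ in general. Your route through Lemma~\ref{lem:g-g:1} via the seminorm inequality $\Var(X)\lesssim\Var(Y)+\|X-Y\|_2^2$ lands directly on the squared quantity, so the match with $\tau_d$ is immediate. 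The price you pay is that the localization error enters as $\gamma_d(\cdot)$ rather than $\sqrt{\gamma_d(\cdot)}$, but your absorption argument ($\gamma_d\le\sqrt{\gamma_d}$ once the argument is small, plus the compact-range patch using the a~priori moment bound and strict positivity of $\gamma_d$) handles this cleanly.

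The remaining differences are cosmetic: the paper takes $n=2m^2$ where you take $n=\lceil m^2\rceil$, and the paper separates the term $m^{3d}/N^d$ from $\tau_d(m^{2d}/N^d)$ before re-absorbing via $b\le\tau_d(b)$, whereas you fold everything into a single $\tau_d$ via the factorization $n^d(r^d+m^d)+\max_j\|\zeta^j\|^d\le 2(m^{3d}+\max_j\|\zeta^j\|^d)r^d$. Both work; yours is slightly more direct.
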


\begin{proof}
	Let $L$ be the constant as chosen as was in \eqref{E:L}.
	We assemble Lemmas
	\ref{lem:g-g:2} and \ref{lem:g-g:3} in order to see that
	\begin{align*}
		\sup_{t\in[0,T]} &\left| \Var\left(
			\fint_{[0,N]^d} \prod_{l=1}^k
			g_l\left( u(t\,,x + \zeta^l)\right)\d x\right)\right|\\
		&\hskip1.7in\lesssim \frac{n^d(r^d+m^d) + L^d}{N^d}
			+ \|h\|_{L^2(\mathbb{B}_r^c)}
			+\sqrt{\gamma_d\left(m^{d-2}\e^{-m^2/2}\right)} + \e^{-n/2},
	\end{align*}
	uniformly for all real numbers $N,n,r,m\ge1$ and all Lipschitz-continuous functions
	$g_1,\ldots,g_k:\R\to\R$ that satisfy \eqref{eq:WLOG}. Optimize over all $r>1$
	to find that
	\begin{align*}
		\sup_{t\in[0,T]}&\left| \Var\left(
			\fint_{[0,N]^d} \prod_{l=1}^k
			g_l\left( u(t\,,x + \zeta^l)\right)\d x\right)\right|\\
		&\hskip1.7in\lesssim \frac{n^dm^d + L^d}{N^d}
			+ \tau_d\left( \frac{n^d}{N^d}\right)
			+\sqrt{\gamma_d\left(m^{d-2}\e^{-m^2/2}\right)} + \e^{-n/2},
	\end{align*}
	uniformly for all real numbers $N,m\ge1$ and all Lipschitz-continuous functions
	$g_1,\ldots,g_k:\R\to\R$ that satisfy \eqref{eq:WLOG}. Next we select $n=2m^2$
	to find that
	\begin{align*}
		\sup_{t\in[0,T]}&\left| \Var\left(
			\fint_{[0,N]^d} \prod_{l=1}^k
			g_l\left( u(t\,,x + \zeta^l)\right)\d x\right)\right|\\
		&\hskip1.7in\lesssim \frac{m^{3d} + L^d}{N^d}
			+ \tau_d\left( \frac{m^{2d}}{N^d}\right)
			+\sqrt{\gamma_d\left(m^{d-2}\e^{-m^2/2}\right)} + \e^{-m^2},
	\end{align*}
	uniformly for all real numbers $N\ge1$ and all Lipschitz-continuous functions
	$g_1,\ldots,g_k:\R\to\R$ that satisfy \eqref{eq:WLOG}. Lemma \ref{lem:tau_d}
	implies that
	\[
		\frac{m^{3d} + L^d}{N^d}
		+ \tau_d\left( \frac{m^{2d}}{N^d}\right) \lesssim
		\tau_d\left( \frac{m^{3d} + L^d}{N^d}\right),
	\]
	where the implied constant depends neither on $(m\,,N)\in[1\,,\infty)^2$
	nor on $L$. And Lemma
	\ref{lem:gamma_d} implies that
	\[
		\sqrt{\gamma_d\left(m^{d-2}\e^{-m^2/2}\right)} + \e^{-m^2}
		\asymp \sqrt{\gamma_d\left(m^{d-2}\e^{-m^2/2}\right)},
	\]
	where the implied constant does not depend on $m\ge1$.
	Combine these bounds in order to obtain the inequality of the theorem.
	That inequality
	and Lemma \ref{lem:gamma_d} together imply \eqref{eq:Var->0}, as well.
\end{proof}

\begin{proof}[Proof of Theorem \ref{th:Poincare}]
	We apply Theorem \ref{th:full}, and change variables in the
	``$\inf_{m > 1}$'' as follows:
	\[
		a := m^{d-2}\e^{-m^2/2}.
	\]
	As $m$ ranges over $[1\,,\infty)$, the variable
	$a$ ranges over $(0\,, c_d)$, where
	\[
		c_d := \max_{m\ge 1}  \left[ m^{d-2}\e^{-m^2/2}\right]; 
	\]
	see \eqref{c_d}.
	Since  $\log(1/a) = \frac12m^2 - (d -2)\log m$, there exists a constant $c = c(d) > 1$ such that
	$m^2 \leq c\,\log(1/a)$ for all $m > 1$.
	And because $\tau_d(ca)\le c\tau_d(a)$ for all $a>0$ and $c > 1$ (see Lemma
	\ref{lem:tau_d}), this yields \eqref{eq:Poincare}.
\end{proof}

\begin{proof}[Proof of Theorem \ref{th:main:intro}]
	The stationarity of $u$ was proved in Lemma \ref{lem:stat}, and spatial ergodicity
	follows from \eqref{eq:Var->0} and Lemma \ref{lem:Var:erg}.
\end{proof}

Finally, we verify Corollaries \ref{co:main:intro} and \ref{co:Poincare:intro}.
The proof is elementary. We include it here
however since the proof depends crucially on careful computation of the various exponents
in \eqref{h:1}--\eqref{exponent:alpha}, \eqref{exponent:A}--\eqref{exponent:B} below.

\begin{proof}[Proof of Corollary \ref{co:main:intro}]
	If $h\in L^2(\R^d)$ then we set $p=q=2$ to see that
	$h\in L^p_{\text{\it loc}}(\R^d)$ and
	\[
		\int_0^1\left( \|h\|_{L^p(\mathbb{B}_r)} \|h\|_{L^q(\mathbb{B}_r^c)}
		+ \|h\|_{L^2(\mathbb{B}_r^c)}^2\right)\omega_d(r)\,\d r
		\le 2\|h\|_{L^2(\R^d)}^2\int_0^1\omega_d(r)\,\d r,
	\]
	so that \eqref{cond:omega} holds thanks to the local integrability of $\omega_d$.
	Thus, it remains to assume that \eqref{cond:co:main:intro} holds. In that case,
	we appeal to \eqref{cond:co:main:intro} and integrate in spherical coordinates
	in order to see that
	\[
		\int_{\mathbb{B}_r}|h(x)|^p\,\d x \lesssim
		\int_0^r s^{d-1 - p(d+\alpha)/2}\,\d s\quad\text{%
		simultaneously for every $r\in(0\,,1)$.}
	\]
	Hence,
	\[
		h\in L^p_{\textit{\it loc}}(\R^d)\quad\text{iff}\quad
		p<\frac{2d}{d+\alpha}.
	\]
	Since $\alpha<d$, it follows that $2d/(d+\alpha)>1$
	and hence $h\in L^p_{\text{\it loc}}(\R^d)$ for every $p$ between
	$1$ and $2d/(d+\alpha)$.
	For every such $p$, \eqref{cond:co:main:intro} ensures that
	\begin{equation}\label{h:1}
		\|h\|_{L^p(\mathbb{B}_r)} \lesssim r^{(d/p)-(d+\alpha)/2}\quad\text{%
		simultaneously for every $r\in(0\,,1)$.}
	\end{equation}
	Choose one such $p$ and define $q:=p/(p-1)$, so that $p^{-1}+q^{-1}=1$.
	Eq.\ \eqref{cond:co:main:intro} implies that, for every $r\in(0\,,1)$,
	\begin{align*}
		\int_{\mathbb{B}_r^c} |h(x)|^q\,\d x
			&\le \int_{r<\|x\|<1}|h(x)|^q\,\d x + \int_{\|x\|>1}|h(x)|^q\,\d x\\
		&\lesssim\int_r^1 t^{d-1-q(d+\alpha)/2}\,\d t + \int_1^\infty t^{d-1-q(d+\beta)/2}\,\d t,
	\end{align*}
	where the implied constants do not depend on $r\in(0\,,1)$.
	The first integral is convergent regardless of the choice of $p$ (hence also $q$). The second integral
	converges iff
	\begin{equation}\label{qd}
		q > \frac{2d}{d+\beta},
	\end{equation}
	which can certainly be arranged if $p$ were chosen sufficiently
	close to $1$.\footnote{%
		To be concrete, we may select $1< p < d/(d-1)$
		to ensure that $q>d$, so that \eqref{qd} holds.
		}
	Choose and fix $p>1$ sufficiently close to $1$ in order to ensure that
	\eqref{qd} holds, whence
	\begin{equation}\label{h:2}
		\|h\|_{L^q(\mathbb{B}_r^c)} \lesssim r^{(d/q)-(d+\alpha)/2}\quad\text{%
		simultaneously for every $r\in(0\,,1)$.}
	\end{equation}
	Finally, we may repeat
	the preceding with $q$ replaced everywhere with $2$ in order to see that
	\begin{equation}\label{h:3}
		\|h\|_{L^2(\mathbb{B}_r^c)}
		\lesssim r^{-\alpha/2}\quad\text{%
		simultaneously for every $r\in(0\,,1)$.}
	\end{equation}
	We may now combine \eqref{h:1}, \eqref{h:2}, and \eqref{h:3}
	in order to see that,
	\begin{align}\label{exponent:alpha}
		\|h\|_{L^p(\mathbb{B}_r)}\|h\|_{L^q(\mathbb{B}_r^c)} +
		\|h\|_{L^2(\mathbb{B}_r^c)}^2 \lesssim r^{-\alpha}\quad\text{%
		simultaneously for every $r\in(0\,,1)$.}
	\end{align}
	Because $\alpha<2\wedge d$, it follows that
	$h\in\mathcal{G}_p(\R^d)$ for all $p$ sufficiently close to $1$.
\end{proof}

\begin{proof}[Proof of Corollary \ref{co:Poincare:intro}]
    It is clear from Theorem \ref{th:full} that
    \begin{equation}\label{eq:Poincare_1}
		\sup_{t\in[0,T]} \Var\left(\fint_{[0,N]^d} g(u(t\,,x))\,\d x\right)
		\lesssim \inf_{m\ge 1}\left[
		\tau_d\left( \frac{m^{3d}}{N^d}\right)
		+\sqrt{\gamma_d\left(m^{d-2}\e^{-m^2/2}\right)}\right]
		[\text{\rm Lip}(g)]^2
	\end{equation}
	uniformly for all real numbers $N>1$ and all Lipschitz-continuous functions $g:\R\to\R$.
	The second inequality in \eqref{cond:co:main:intro} implies that
	$\int_{\|w\| > r}|h(w)|^2\d w \lesssim r^{-\beta}$
	simultaneously for every $r\in(1\,, \infty)$.
	Hence, by the definition of $\tau_d$ in \eqref{tau_d},
	\begin{equation}\label{exponent:A}
		\tau_d(a) \lesssim \inf_{r > 1} \left[ar^d + r^{-\beta} \right] \asymp a^{A}
		\qquad\text{%
		simultaneously for every $a\in[0\,, \beta/d)$,}
	\end{equation}
	where  $A:= \beta/(d + \beta) \in (0\,, 1).$
	Moreover,  $\bar{f}(r) \lesssim r^{-\alpha}$
	simultaneously for every $r\in(0\,,1)$, thanks to \eqref{exponent:alpha} and \eqref{eq:PD}.
	Thus,
	\begin{align*}
		\int_0^s\bar{f}(r)\omega_d(r)\d r \lesssim \int_0^sr^{-\alpha}\omega_d(r)\d r
		\asymp
		\begin{cases}
			s^{1 - \alpha} & \text{if}\quad d = 1, \\
			s^{2 - \alpha}\log_+(1/s) & \text{if}\quad d = 2, \\
			s^{2 - \alpha} & \text{if} \quad d \ge 3,
		\end{cases}
	\end{align*}
	uniformly for all $s \in (0\,, 1)$. Since
	$\bar{\bm{v}}_1(s) \asymp  s^{-d + 1}\omega_d(s)$
	uniformly for all $s \in (0\,, 1)$ --- see \eqref{pot:bound} and \eqref{barbar} ---
	we see from the definition of $\gamma_d$ in \eqref{gamma_d} that 
	\begin{align*}
		\gamma_d(t) \lesssim
		\begin{cases}
			t & \text{if}\quad d = 1, \\
			t^{\nu}\log_+(1/t) & \text{if}\quad d = 2, \\
			t^{\nu} & \text{if} \quad d \ge 3,
		\end{cases}
	\end{align*}
	uniformly for all $t \in (0\,,1)$, where
	$\nu:= (2 - \alpha)/(d - \alpha)$.
	Therefore, there exists $B = B(d\,, \alpha) > 0$ such that
	\begin{equation}\label{exponent:B}
		\gamma_d(t) \lesssim t^B\qquad \text{for all $t \in (0\,,1)$}.
	\end{equation}

	Choose and fix $q > Ad/B$, and let $m:=(8q\log N)^{1/2}$.
	There exists a constant $\tilde{c} = \tilde{c}(d\,, \beta\,, \alpha)$ large enough
	to ensure that $m^{3d}/N^d < \beta/d$ for all $N > \tilde{c}$.
	It then follows from \eqref{eq:Poincare_1}--\eqref{exponent:B} that
	\begin{align*}
		\sup_{t\in[0,T]} \Var\left(\fint_{[0,N]^d} g(u(t\,,x))\,\d x\right) &\lesssim \left[
			\left(\frac{m^{3}}{N}\right)^{Ad}
			+m^{(d-2)B/2}\e^{-Bm^2/4}\right]
			[\text{\rm Lip}(g)]^2 \\
		& \lesssim  \left[
			\left(\frac{m^{3}}{N}\right)^{Ad}
			+\e^{-Bm^2/8}\right]
			[\text{\rm Lip}(g)]^2 \\
		& \asymp   \left[
			\left(\frac{(\log N)^{3/2}}{N}\right)^{Ad}
			+\left(\frac{1}{N}\right)^{Bq}\right]
			[\text{\rm Lip}(g)]^2 \\
		& \asymp  [\text{\rm Lip}(g)]^2
			\left(\frac{(\log N)^{3/2}}{N}\right)^{d\beta/(d + \beta)},
	\end{align*}
	uniformly for all $N > \tilde{c}$. This completes the proof of Corollary \ref{co:Poincare:intro}.
\end{proof}

\end{document}